\newtheorem{theorem}{Theorem}[section]
\newtheorem{lemma}[theorem]{Lemma}
\newtheorem{corollary}[theorem]{Corollary}
\newtheorem{conjecture}{\bf Conjecture}
\newtheorem{fact}{\bf Fact}
\newtheorem{claim}{\bf Claim}
\author{Aijun Dong\affiliationmark{1}\thanks{This work was supported by the National
Natural Science Foundation of China (Grant No. 61773015). It was
also supported by China Postdoctoral Science Foundation Funded
Project (Grant No.2014M561909); the Nature Science Foundation of
Shandong Province of China (Grant No. ZR2014AM028, ZR2017LEM014)
.}
  \and Jianliang Wu\affiliationmark{2}}
\title[Equitable Coloring and Equitable Choosability]{Equitable Coloring and Equitable
Choosability of Planar Graphs without chordal 4- and 6-Cycles}
\affiliation{
  School of Science, Shandong Jiaotong University,
Jinan, 250357, P. R. China\\
  School of Mathematics, Shandong University, Jinan,
250100, P. R. China} \keywords{equitable choosability, planar
graph, discharging}
\begin{document}
\publicationdetails{21}{2019}{3}{16}{4566}
\maketitle
\begin{abstract}
A graph $G$ is equitably $k$-choosable if, for any given
$k$-uniform list assignment $L$, $G$ is $L$-colorable and each
color appears on at most $\lceil\frac{|V(G)|}{k}\rceil$ vertices.
A graph is equitably $k$-colorable if the vertex set $V(G)$ can be
partitioned into $k$ independent subsets $V_1$, $V_2$, $\cdots$,
$V_k$ such that $||V_i|-|V_j||\leq 1$ for $1\leq i, j\leq k$. In
this paper, we prove that if $G$ is a planar graph without chordal
$4$- and $6$-cycles, then $G$ is equitably $k$-colorable and
equitably $k$-choosable where $k\geq\max\{\Delta(G), 7\}$.
\end{abstract}

\section{Introduction}
The terminology and notation used but undefined in this paper can
be found in~\cite{bondy}. Let $G=(V,E)$ be a graph. We use $V(G)$,
$E(G)$, $\Delta(G)$ and $\delta(G)$ to denote the vertex set, edge
set, maximum degree and, minimum degree of $G$, respectively.
Particularly, we use $F(G)$ to denote the face set of $G$ when $G$
is a plane graph. Let $d_G(x)$ or simply $d(x)$, denote the degree
of a vertex (resp. face) $x$ in $G$. A vertex (resp. face) $x$ is
called a $k$-$vertex$ (resp. $k$-$face$), $k^+$-$vertex$ (resp.
$k^+$-$face$), $k^-$-$vertex$ or $k^{--}$-$vertex$, if $d(x)=k$,
$d(x)\geq k$, $2\leq d(x)\leq k$, or $1\leq d(x)\leq k$. We use
$(d_1, d_2, \cdots, d_n)$ to denote a face $f$ if $d_1, d_2,
\cdots, d_n$ are the degrees of vertices incident with the face
$f$ where $3\leq n\leq 5$. Let $\delta(f)$ denote the minimal
degree of vertices incident with $f$. In the following, let
$f_i(v)$ denote the number of $i$-faces incident with $v$ for each
$v\in V(G)$. Let $n_i(f)$ denote the number of $i$-vertices which
are incident with $f$. A graph $G$ is $k$-degenerate if every
subgraph of $G$ has a vertex of degree at most $k$. A cycle $C$ of
length $k$ is called a $k$-$cycle$. Moreover, if there exists an
edge $xy\in E(G)-E(C)$ and $x$, $y\in V(C)$, then the cycle $C$ is
called a chordal $k$-$cycle$.

A proper $k$-coloring of a graph $G$ is a mapping $\pi$ from the
vertex set $V(G)$ to the set of colors $\{1,2,\cdots,k\}$ such
that $\pi(x)\neq\pi(y)$ for every edge $xy\in E(G)$. A graph $G$
is equitably $k$-colorable if $G$ has a proper $k$-coloring such
that the sizes of the color classes differ by at most 1. The
equitable chromatic number of $G$, denoted by $\chi_e(G)$, is the
smallest integer $k$ such that $G$ is equitably $k$-colorable. The
equitable chromatic threshold of $G$, denoted by $\chi^*_e(G)$, is
the smallest integer $k$ such that $G$ is equitably $l$-colorable
for every $l\geq k$. It is obvious that $\chi_e(G)\leq
\chi^*_e(G)$ for any graph $G$. However these two parameters may
not be the same. For example, if $K_{2n+1, 2n+1}$ ($n$ is a
positive integer) is a complete bipartite graph, then
$\chi_e(K_{2n+1, 2n+1})=2$, $\chi^*_e(K_{2n+1, 2n+1})=2n+2$.

In many applications of graph coloring, it is desirable that the
color classes are not too large. For example, when using a
coloring model to find an optimal final exam schedule, one would
like to have approximately equal number of final exams in each
time slot because the whole exam period should be as short as
possible and the number of classrooms available is limited.
Recently, ~\cite{pemma}, ~\cite{jans} used equitable colorings to
derive deviation bounds for sums of dependent random variables
that exhibit limited dependence. In all of these applications, the
fewer colors we use, the better the deviation bound is. Equitable
coloring has a well-known property that restricts the size of each
color class by its definition.

In 1970, ~\cite{hajs} proved that $\chi^*_e(G)\leq \Delta(G)+1$
for any graph $G$. This bound is sharp as the example of $K_{2n+1,
2n+1}$ shows. In 1973, ~\cite{meye} introduced the notion of
equitable coloring and made the following conjecture.

\begin{conjecture}\label{meyerconj}  If $G$ is a connected graph which is neither a
complete graph nor odd cycle, then $\chi_e(G)\leq \Delta(G)$.
\end{conjecture}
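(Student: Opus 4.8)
The final displayed statement is Meyer's Equitable Coloring Conjecture, the equitable analogue of Brooks' theorem, so I should be upfront that it remains open in full generality; what follows is the line of attack I would pursue, together with the point at which it stalls. The plan is to build on the equitable analogue of the greedy bound already quoted above, namely the result that $\chi^*_e(G)\leq \Delta(G)+1$ for every graph $G$. This guarantees an equitable $(\Delta+1)$-coloring, and Meyer's conjecture asks us to save one color whenever $G$ is connected and is neither $K_{\Delta+1}$ nor an odd cycle, exactly as Brooks' theorem saves one color over the greedy bound in the ordinary setting.

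So the first step would be to fix an equitable $(\Delta+1)$-coloring $\varphi$ with classes $V_1,\dots,V_{\Delta+1}$ whose sizes differ by at most one, and then to try to empty one class by \emph{equitable recoloring moves}: Kempe-type swaps that redistribute the emptied class among the remaining $\Delta$ classes while preserving the balance condition $||V_i|-|V_j||\leq 1$. I would split into the two cases familiar from Brooks. If $G$ is not $\Delta$-regular, I would root a spanning tree at a vertex $r$ of degree $<\Delta$ and process vertices from the leaves toward $r$; the slack at $r$ should let a greedy pass succeed with $\Delta$ colors, after which a separate balancing phase—repeatedly moving a vertex from an over-full class to an under-full class along an alternating path—restores equitability. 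If $G$ is $\Delta$-regular, I would isolate, as in Brooks, a vertex with two non-adjacent neighbors whose removal keeps $G$ connected, pre-color to break regularity, and reduce to the non-regular case.

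For the planar and bounded-degeneracy instances that are the real subject of this paper, I would instead set up a minimal counterexample and run a discharging argument: assign each vertex and face an initial charge summing by Euler's formula to a negative constant, show that every configuration forbidden in a minimal counterexample (low-degree vertices in prescribed neighborhoods) is reducible—delete a vertex, extend the equitable coloring to $G$, and rebalance—and then redistribute charge so that every vertex and face ends nonnegative, contradicting the negative total.

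The hard part, and the reason the conjecture is still open, is the rebalancing step. In ordinary coloring a single Kempe swap never affects feasibility, but here every recoloring move changes two class sizes, so a swap that repairs the color constraint at one vertex can destroy the balance condition elsewhere, and chaining such moves can cascade. Controlling this interaction—guaranteeing that an augmenting sequence of equitable moves terminates without reintroducing an over-full class—is precisely what the $\chi^*_e\leq\Delta+1$ bound achieves for $\Delta+1$ colors but what no one has managed for $\Delta$ colors in general. My expectation is that the argument can be pushed through only under extra structural hypotheses (bounded degeneracy, planarity with $\Delta$ large, and the absence of the prescribed chordal $4$- and $6$-cycles), which is exactly the regime the remainder of this paper addresses.
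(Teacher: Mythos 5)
You have correctly identified that this statement is Meyer's 1973 conjecture, which the paper states as Conjecture~\ref{meyerconj} and does not prove: it is an open problem, and the paper only establishes the related Conjectures~\ref{chenconj} and~\ref{kostochconj2} for the restricted class of planar graphs without chordal $4$- and $6$-cycles. So there is no proof in the paper to compare yours against, and your decision not to claim a proof is the right one. Your sketch --- starting from the Hajnal--Szemer\'edi-type bound $\chi^*_e(G)\leq\Delta(G)+1$, attempting to save a color via Brooks-style case analysis, and flagging the rebalancing of class sizes under Kempe-type moves as the obstruction --- is a reasonable account of why the general conjecture resists the classical techniques, and your description of the minimal-counterexample-plus-discharging strategy is indeed the method the paper uses for its actual theorems (Lemma~\ref{lemma3} supplies the reducible configurations, Lemmas~\ref{jun1} and~\ref{kostochka1} supply the extension step, and Theorem~\ref{theorem1} assembles them). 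The only caution is that nothing in your proposal constitutes progress on the conjecture itself: the ``balancing phase'' via alternating paths is precisely the step that is not known to terminate correctly, so the write-up should be presented as a survey of the difficulty rather than as a partial proof.
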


In 1994, ~\cite{chenlw} put forth the following conjecture.

\begin{conjecture}\label{chenconj}  For any connected graph $G$, if it is different
from a complete graph, a complete bipartite graph and an odd
cycle, then $\chi^*_e(G)\leq \Delta(G)$.
\end{conjecture}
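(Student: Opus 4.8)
This statement is the equitable, threshold version of Brooks' theorem, so the plan is to deduce it from two ingredients: the bound $\chi^*_e(G)\leq\Delta(G)+1$ of~\cite{hajs}, which already supplies an equitable $l$-coloring for every $l\geq\Delta(G)+1$, and a separate argument for the single remaining value $l=\Delta(G)$. By the definition of $\chi^*_e$, this means the whole content of the conjecture collapses to one claim: a connected graph $G$ that is neither complete, nor complete bipartite, nor an odd cycle is equitably $\Delta(G)$-colorable. I would first record that Brooks' theorem gives such a $G$ a proper (not necessarily equitable) $\Delta(G)$-coloring, since $G$ is not complete and not an odd cycle; the task is therefore purely to rebalance the color classes of an already proper coloring, and the complete bipartite graphs $K_{2n+1,2n+1}$ show that this rebalancing can genuinely fail.

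The rebalancing I would run as a minimal-counterexample induction on $|V(G)|$ combined with a local recoloring mechanism. Fix $k=\Delta(G)$ and write $|V(G)|=qk+r$, so an equitable $k$-coloring is one whose classes all have size $q$ or $q+1$. Starting from a proper $k$-coloring, if some class $V_i$ has size at least $q+2$ while some $V_j$ has size at most $q$, I would try to recolor a vertex $v\in V_i$ directly into $V_j$, and failing that run a Kempe-chain exchange between colors $i$, $j$ and, when needed, a third auxiliary color, so as to strictly decrease the imbalance $\sum_i\bigl||V_i|-q\bigr|$. To guarantee that some admissible move always exists one needs a sparsity or degeneracy hypothesis: in the cases where the conjecture is known, one first proves, usually by a discharging argument, that $G$ contains a vertex of small degree, giving a reducible configuration that lets the induction delete that vertex, color $G-v$ equitably, and reinsert $v$.

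The hard part---and the reason the conjecture remains open in general---is showing that the rebalancing never becomes permanently stuck. The fixed points of the exchange process, namely configurations in which no vertex of an overfull class can be shifted into any underfull class even after Kempe exchanges, are exactly what must be classified, and one expects the excluded families (complete graphs, complete bipartite graphs, odd cycles) to be among them; proving that they are the \emph{only} obstructions for an arbitrary, possibly dense, graph is where every general attack has stalled. I would therefore not expect this scheme to yield an unconditional proof. The realistic target, and the one the present paper pursues, is to impose enough sparsity---here planarity together with the absence of chordal $4$- and $6$-cycles---to force a bounded degeneracy; once that is in hand, the discharging-plus-reinsertion argument above closes the induction and confirms the conjecture for this class whenever $\Delta(G)\geq 7$.
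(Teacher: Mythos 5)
You have not proved the statement, and you say so yourself---which is the correct diagnosis, because the statement is Conjecture~\ref{chenconj}, an open conjecture, and the paper contains no proof of it. The paper only \emph{confirms} it for planar graphs without chordal $4$- and $6$-cycles with $k\geq\max\{\Delta(G),7\}$ (Theorem~\ref{theorem1} and its corollaries). Your reduction of the conjecture to the single value $l=\Delta(G)$ via \cite{hajs} is sound, and your identification of where every general attack stalls (classifying the configurations on which class-rebalancing gets permanently stuck) is accurate. But the Kempe-chain exchange scheme you outline carries no argument that the imbalance $\sum_i\bigl||V_i|-q\bigr|$ can always be strictly decreased, nor that the only terminal obstructions are the excluded families; that is precisely the open content, so as a proof the proposal has a gap by construction.

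One substantive correction to your sketch of how the known special cases (including this paper) actually close the induction: the mechanism is \emph{not} ``delete a low-degree vertex $v$, equitably color $G-v$, reinsert $v$,'' and it involves no rebalancing of an existing proper coloring at all. Reinserting a single vertex generally destroys equitability, since one class grows while the others do not. The actual reducible-configuration step is Lemma~\ref{jun1} (and Lemma~\ref{kostochka1} in the list setting): one finds an \emph{ordered set of $k$ distinct vertices} $S=\{x_1,\dots,x_k\}$ with $|N_G(x_i)-S|\leq k-i$, equitably colors $G-S$ by minimality, and then inserts $x_k,x_{k-1},\dots,x_1$ greedily, exactly one vertex into each of the $k$ color classes---so every class grows by precisely one and equitability is preserved automatically, with no Kempe exchanges and no appeal to Brooks' theorem. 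The role of the discharging arguments (Lemmas~\ref{lemma2} and~\ref{lemma3}) is to produce the configurations supplying the first few vertices of $S$, and $4$-degeneracy (Corollary~\ref{cor4degenerate}) furnishes the remaining vertices in the required degree order. Your proposal's closing claim that ``discharging-plus-reinsertion'' confirms the restricted case is therefore right in spirit but wrong in the key detail of what gets removed: a full $k$-set, not a single vertex, is what makes the induction work.
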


~\cite{chenlw, chenl} proved Conjecture~\ref{chenconj} for graphs
with $\Delta(G)\leq 3$ or $\Delta(G)\geq \frac{|V(G)|}{2}$.
Recently, ~\cite{chenY} improved the former result and confirmed
the Conjecture~\ref{chenconj} for graphs with $\Delta(G)\geq
\frac{|V(G)|}{3}+1$. ~\cite{yapz1, yapz2} showed that
Conjecture~\ref{chenconj} holds for planar graphs with
$\Delta(G)\geq 13$. Recently, ~\cite{Nakprasit} confirmed the
Conjecture~\ref{chenconj} for planar graphs with $\Delta(G)\geq
9$. ~\cite{lihw} verified $\chi^*_e(G)\leq \Delta(G)$ for
bipartite graphs other than complete bipartite graphs.
~\cite{wanz} proved Conjecture~\ref{chenconj} for line graphs, and
~\cite{kosn1, kosn2} proved it for graphs with low average degree,
and $d$-degenerate graphs with $\Delta(G)\geq
14d+1$.~\cite{yanwang} showed that Conjecture~\ref{chenconj} holds
for Kronecker products of complete multipartite graphs and
complete graphs. ~\cite{jianl}, ~\cite{luo} confirmed
Conjecture~\ref{chenconj} for some planar graphs with large girth,
respectively. Recently, ~\cite{qiong}, ~\cite{jun1}, ~\cite{dong1,
dong2, dong3}, ~\cite{Nakprasit2} confirmed
Conjecture~\ref{chenconj} for some planar graphs with some
forbidden cycles, respectively. ~\cite{zhang}, ~\cite{jun2}
verified the Conjecture~\ref{chenconj} for some series-parallel
graphs and outerplanar graphs, respectively.

For a graph $G$ and a list assignment $L$ assigned to each vertex
$v\in V(G)$ a set $L(v)$ of acceptable colors, an $L$-coloring of
$G$ is a proper vertex coloring such that for every $v\in V(G)$
the color on $v$ belongs to $L(v)$. A list assignment $L$ for $G$
is $k$-$uniform$ if $|L(v)|=k$ for all $v\in V(G)$. A graph $G$ is
equitably $k$-choosble if, for any $k$-uniform list assignment
$L$, $G$ is $L$-colorable and each color appears on at most
$\lceil\frac{|V(G)|}{k}\rceil$ vertices.

In 2003, Kostochka, Pelsmajer and West investigated the equitable
list coloring of graphs. They proposed the following conjectures
in~\cite{kostochka}.

\begin{conjecture}\label{kostochconj1} Every graph $G$ is equitably $k$-choosable
whenever $k>\Delta(G)$.
\end{conjecture}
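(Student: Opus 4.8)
The plan is to treat this as the list analogue of the Hajnal--Szemer\'edi theorem (which gives equitable $(\Delta+1)$-coloring) and to try to lift the shortest known proof of that theorem, due to Kierstead and Kostochka, to the list setting. First I would normalise the problem. Given a $k$-uniform list assignment $L$ with $k>\Delta(G)$, write $n=|V(G)|$ and $s=\lceil n/k\rceil$; the goal is a proper $L$-coloring in which no color is used more than $s$ times. When $k\mid n$ this asks for perfectly balanced classes of size $s$, and I would aim first for this divisible case, trying afterwards to pad $G$ with fresh isolated vertices to reduce the general case to it --- though, unlike for ordinary equitable coloring, padding in the list setting is itself a small lemma, since the new lists interact with the global color-count bound. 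After normalisation the objective is a proper $L$-coloring whose color classes all have size at most $s$, with exact balance when $k\mid n$.

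The main engine would be an augmentation argument run over the \emph{color classes} rather than over the vertices. I would maintain a proper $L$-coloring that is \emph{nearly equitable}: $k-2$ classes have size exactly $s$, one class is overfull (size $s+1$) and one is underfull (size $s-1$). To repair it I build an auxiliary digraph $H$ on the color set $\{1,\dots,k\}$, placing an arc $i\to j$ whenever some vertex currently colored $i$ has $j\in L(v)$ and recoloring that vertex to $j$ keeps the coloring proper. A directed path in $H$ from the overfull color to the underfull color then yields a chain of single-vertex shifts that decreases the imbalance, so it suffices to show such a path always exists. In the non-list case every vertex can in principle receive every color, $H$ is essentially complete on the reachable classes, and a discharging/counting estimate forces the path whenever $k\ge\Delta+1$; I would try to reproduce that estimate, charging each blocked arc $i\to j$ to a neighbour of the moved vertex that already carries color $j$ and using $|L(v)|=k>\Delta(G)$ to bound the blockers.

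The step I expect to be the genuine obstacle is exactly this last estimate, and it is the reason the statement is a conjecture rather than a theorem. With arbitrary lists, a vertex in class $i$ may simply not have color $j$ in its list, so the arc $i\to j$ can be absent for list reasons that have nothing to do with the degree bound; the movability digraph $H$ can be sparse and adversarially structured, and the Kierstead--Kostochka counting collapses because it implicitly assumes every color is a candidate everywhere. Overcoming this seems to require a genuinely new absorption lemma that survives arbitrary $k$-lists --- for instance a list-sensitive discharging over the classes, or a probabilistic \emph{nibble} that colors the graph in rounds and controls class sizes through a lopsided Lov\'asz Local Lemma. I therefore expect that the full strength of the conjecture, with no hypothesis on $G$ beyond $k>\Delta(G)$, is out of reach by the direct adaptation above; the realistic route is to first establish the augmentation lemma under structural restrictions --- bounded degeneracy, or planarity with forbidden cycles as in the present paper --- where $H$ is localised and the blocked arcs can be controlled, and only afterwards to attempt to discard those restrictions.
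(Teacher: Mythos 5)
There is no proof of this statement to compare against: it is Conjecture~\ref{kostochconj1}, due to Kostochka, Pelsmajer and West, and the paper neither proves it nor claims to. It is open in general, known only in special cases ($\Delta(G)\leq 3$ by~\cite{pelsmajer, wang}, $\Delta(G)\leq 7$ by~\cite{kosn3}, plus $2$-degenerate graphs, interval graphs, and the like from~\cite{kostochka}). Your proposal is commendably honest on this point -- you explicitly conclude that the direct lift of the Kierstead--Kostochka augmentation argument fails -- so what you have written is a research plan together with a correct diagnosis of why it stalls, not a proof. The fatal step is exactly the one you name: in the recoloring digraph $H$ on the color classes, an arc $i\to j$ can be absent simply because $j\notin L(v)$ for every $v$ in class $i$, so the counting that in the non-list setting forces a directed path from the overfull class to the underfull class (using that all $k>\Delta(G)$ colors are candidates at every vertex) has no analogue. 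One further caution: even your ``small lemma'' for padding with isolated vertices is not innocuous in the list setting, since equitable choosability is defined against \emph{all} $k$-uniform list assignments of the padded graph and the ceiling $\lceil |V|/k\rceil$ shifts; the known literature avoids this reduction rather than proving it.

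It is also worth contrasting your class-recoloring strategy with what the present paper actually does for its restricted class (planar graphs without chordal $4$- and $6$-cycles, $k\geq\max\{\Delta(G),7\}$, which confirms the conjecture there). The paper never recolors: it takes a minimal counterexample, uses discharging to find one of finitely many local configurations (Lemmas~\ref{lemma2} and~\ref{lemma3}), uses $4$-degeneracy (Corollary~\ref{cor4degenerate}) to extend the configuration to an ordered set $S=\{x_1,\dots,x_k\}$ with $|N_G(x_i)-S|\leq k-i$, and applies Lemma~\ref{kostochka1} to delete $k$ vertices at a time and induct. That inductive deletion device sidesteps the movability-digraph obstruction entirely, because the vertices of $S$ are colored greedily from their own lists in a prescribed order; the price is that finding such an $S$ requires strong structural hypotheses, which is precisely why the unrestricted conjecture remains out of reach by either route.
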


\begin{conjecture}\label{kostochconj2} If $G$ is a connected graph with maximum
degree at least $3$, then $G$ is equitably $\Delta(G)$-choosable,
unless $G$ is a complete graph or is $K_{k,k}$ for some odd $k$.
\end{conjecture}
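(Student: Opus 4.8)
The statement is Conjecture~\ref{kostochconj2}, the $k=\Delta$ case of the Kostochka--Pelsmajer--West conjecture, which remains open in full generality; what follows is the route I would attempt, with the point at which it genuinely resists marked explicitly. Write $k=\Delta(G)$ and fix an arbitrary $k$-uniform list assignment $L$; the goal is an $L$-coloring in which every color is used at most $\lceil|V(G)|/k\rceil$ times. The plan is to argue by minimal counterexample: let $G$ be a connected graph, distinct from $K_{k+1}$ and from $K_{k,k}$ with $k$ odd, that is a counterexample with $|V(G)|$ minimum and, subject to that, $|E(G)|$ maximum. First I would record the structural constraints such a $G$ must satisfy. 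It cannot contain a vertex whose deletion leaves the balance cap slack enough to reinsert it greedily, so $\delta(G)$ cannot be small relative to how $|V(G)|$ sits modulo $k$; it cannot contain two nonadjacent vertices that could be merged without creating one of the forbidden complete or complete-bipartite structures (this is where maximality of $|E(G)|$ is used); and it cannot contain a proper induced subgraph on a multiple of $k$ vertices that detaches cleanly. Taken together these reductions are meant to force $G$ to be ``dense and rigid,'' i.e.\ to resemble one of the two exceptions.

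The engine for producing the coloring is a deletion--reinsertion argument controlled by Hall's theorem. I would select a set $W\subseteq V(G)$ with $|W|$ a multiple of $k$ (ideally $W$ independent, or at least inducing a graph of small maximum degree) such that $G-W$ is neither $K_{k+1}$ nor a forbidden $K_{k,k}$. By minimality $G-W$ is equitably $k$-choosable, so it admits an $L$-coloring meeting the cap $\lceil(|V(G)|-|W|)/k\rceil$. I would then reinsert the vertices of $W$ by finding a system of distinct representatives in the bipartite ``vertex--color'' auxiliary graph in which each $w\in W$ is joined to the colors of $L(w)$ that do not already appear on its neighbors, and I would impose a capacity on each color so that the final count stays at or below $\lceil|V(G)|/k\rceil$. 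A Hall-with-capacities (defect) argument then certifies the extension, provided every subset $S\subseteq W$ collectively has access to at least $|S|$ admissible color-slots. Peeling off successive blocks of $k$ vertices reduces $G$ to a base case small enough to check directly, which closes the induction in the generic situation.

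The hard part is precisely the boundary regime $k=\Delta(G)$, and it is the reason the conjecture is still open. A vertex $v$ of full degree $k$ can see as many as $k$ distinct colors on its neighbors, so after coloring $G-W$ the set of admissible colors for $v$ may be empty; this is exactly the mechanism that makes $K_{k+1}$ and the odd $K_{k,k}$ genuine exceptions rather than artifacts of the argument. Verifying the Hall condition for $W$ therefore cannot be carried out locally: one must control, globally and \emph{simultaneously}, both the list-blocking at full-degree vertices and the capacity constraints imposed by equitability, and an adversarial choice of $L$ can make these two requirements conflict. I do not see how to discharge this tension in full generality by the scheme above alone. Concretely, I would only expect the Hall verification to succeed under an extra hypothesis ruling out the dense local structure (for instance bounded degeneracy together with a forbidden-subgraph condition), which is exactly the kind of restriction under which the known partial results live. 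This is why the natural \emph{provable} target in place of the bare conjecture is a structural subclass in which a discharging argument can supply the missing reducible configurations --- for example the planar graphs without chordal $4$- and $6$-cycles that are the actual subject of this paper.
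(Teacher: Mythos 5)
You were asked to prove Conjecture~\ref{kostochconj2}, but note that this statement is presented in the paper precisely as a \emph{conjecture} of Kostochka, Pelsmajer and West: the paper contains no proof of it, and none is known. Your diagnosis is therefore accurate rather than evasive --- the paper only confirms the conjecture for the restricted class of planar graphs without chordal $4$- and $6$-cycles with $\Delta(G)\geq 7$, which is exactly the kind of structural subclass you predict at the end of your sketch. Still, since you offer a proof strategy, it is worth naming where it breaks beyond the point you flag yourself. The obstruction you identify is the right one: when $k=\Delta(G)$, a full-degree vertex can have every color of its list blocked by its neighbors, so the Hall-with-capacities verification for your reinsertion set $W$ cannot be certified locally, and the two exceptional graphs show this is not an artifact. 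But there is a second, more mechanical flaw: your choice of minimal counterexample with $|V(G)|$ minimum and, subject to that, $|E(G)|$ \emph{maximum} is not well-founded for this problem, because adding an edge (or merging two nonadjacent vertices) can increase $\Delta(G)$ and hence change the parameter $k=\Delta(G)$ itself; the resulting denser graph is then a candidate counterexample for a \emph{different} $k$, so edge-maximality cannot be invoked to force rigidity the way it can for ordinary chromatic-number arguments where $k$ is fixed in advance.

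For comparison, the mechanism the paper actually uses in its restricted setting differs from your Hall-based block-peeling in an instructive way. Instead of removing a set $W$ with $|W|$ a multiple of $k$ and matching vertices to color-slots, the paper applies Lemma~\ref{kostochka1} (from \cite{kostochka}): one deletes an \emph{ordered} set $S=\{x_1,\dots,x_k\}$ of exactly $k$ vertices satisfying $|N_G(x_i)-S|\leq k-i$, equitably $L$-colors $G-S$ by minimality, and then reinserts $x_1,\dots,x_k$ greedily with pairwise distinct colors; the vertex $x_i$ faces at most $(k-i)+(i-1)=k-1$ forbidden colors in its list of size $k$, and since each color class grows by at most one, the cap $\lceil|V(G)|/k\rceil$ is preserved automatically --- no Hall condition, and hence no global capacity bookkeeping, is ever needed. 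The entire difficulty is thereby shifted onto showing that such an ordered set $S$ exists, and this is what the discharging lemmas (Lemma~\ref{lemma2} and Lemma~\ref{lemma3}) together with $4$-degeneracy (Corollary~\ref{cor4degenerate}) supply for the class at hand: the reducible configurations of Figures 1 and 2 are exactly seeds from which $S$ can be completed by repeatedly taking minimum-degree vertices. Your scheme, by contrast, must verify a defect-Hall condition simultaneously against adversarial lists and equitability caps, and in the boundary regime $k=\Delta(G)$ this is precisely what no one knows how to do; so your final conclusion --- that the bare conjecture is not provable by this route without an extra structural hypothesis --- agrees with the state of the art and with the role the statement plays in this paper.
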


It has been proved that Conjecture~\ref{kostochconj1} holds for
graphs with $\Delta(G)\leq3$ in ~\cite{pelsmajer, wang} and graphs
with $\Delta(G)\leq7$ in~\cite{kosn3}. Kostochka, Pelsmajer and
West proved that a graph $G$ is equitably $k$-choosable if either
$G\neq K_{k+1}, K_{k,k}$ (with $k$ odd in $K_{k,k}$) and
$k\geq\max\{\Delta, \frac{|V(G)|}{2}\}$, or $G$ is a connected
interval graph and $k\geq\Delta(G)$ or $G$ is a $2$-degenerate
graph and $k\geq\max\{\Delta(G),5\}$ in~\cite{kostochka}.
Pelsmajer proved that every graph is equitably $k$-choosable for
any $k\geq\frac{\Delta(G)(\Delta(G)-1)}{2}+2$ in~\cite{pelsmajer}.
Bu and his collaborators have established a series results for
Conjecture~\ref{kostochconj2} in class of planar graph as follows
~\cite{qiong,jun1,jun2,jun3}. Zhang and Wu proved
Conjecture~\ref{kostochconj2} for series-parallel graphs
in~\cite{zhang}. Some improved results on planar graphs were
obtained in~\cite{dong1}, ~\cite{dong2} and~\cite{dong3}.

In this paper, we improve the result in~\cite{qiong} and confirm
the Conjecture~\ref{chenconj}, Conjecture~\ref{kostochconj2} for
some planar graphs in which $4$- and $6$-cycles are allowed to
exist, which shows that if $G$ is a planar graph without chordal
$4$- and $6$-cycles, then $G$ is equitably $k$-colorable and
equitably $k$-choosable where $k\geq\max\{\Delta(G), 7\}$.

\section{Planar graphs without chordal 4- and 6-cycles}

First let us introduce some lemmas.

\vspace{-0.3cm}

\begin{lemma}\label{lemma1} Let $G$ be a planar
graph without chordal $4$- or $6$-cycles. Then in $G$, there is no
$3$-cycle adjacent to a $3$-cycle, nor a $4$-cycle adjacent to two
$3$-cycles. Furthermore, if $\delta(G)\geq3$, then there is no
$3$-cycle adjacent to a $5$-cycle, nor a $4$-cycle adjacent to a
$4$-cycle.
\end{lemma}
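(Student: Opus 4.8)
The plan is to reduce each forbidden configuration to a chordal $4$- or $6$-cycle, contradicting the hypothesis on $G$. The unifying observation is that when two cycles $C$ and $C'$ share exactly one edge $uv$, the symmetric difference of their edge sets is a single cycle $\tilde{C}$ of length $|C|+|C'|-2$ on the remaining vertices, and $uv$ is a chord of $\tilde{C}$ as soon as the listed vertices are distinct. For the first assertion, suppose two $3$-cycles share an edge $v_1v_2$, with third vertices $v_3$ and $v_4$. Then $v_3\,v_1\,v_4\,v_2$ is a $4$-cycle containing $v_1,v_2$, and since $v_1v_2\in E(G)$ is not one of its four edges, it is a chord; thus $G$ has a chordal $4$-cycle, a contradiction. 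So no $3$-cycle is adjacent to a $3$-cycle.

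For the second assertion I would first invoke the part just proved. Let $C=v_1v_2v_3v_4$ be a $4$-cycle, which is necessarily chordless since a chord would be a chordal $4$-cycle, and suppose $C$ is adjacent to two $3$-cycles. Up to symmetry two placements occur: the triangles sit on opposite edges (apices $x$ on $v_1v_2$ and $y$ on $v_3v_4$) or on consecutive edges (apices $x$ on $v_1v_2$ and $y$ on $v_2v_3$). In the opposite case $x\,v_1\,v_4\,y\,v_3\,v_2$ is a $6$-cycle with chord $v_1v_2$; in the consecutive case $x\,v_1\,v_4\,v_3\,y\,v_2$ is a $6$-cycle with chord $v_2v_3$. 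Either way a chordal $6$-cycle appears. It remains only to verify that the apices are genuinely new vertices: $x,y\notin V(C)$ because $C$ is chordless, and $x\neq y$ because a common apex would lie in two triangles sharing an edge, which the first assertion already forbids.

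For the \emph{furthermore} part I would argue analogously, now with the standing hypothesis $\delta(G)\geq 3$. If a $3$-cycle with apex $t$ on an edge $v_1v_2$ were adjacent to a $5$-cycle $v_1v_2v_3v_4v_5$, then $t\,v_1\,v_5\,v_4\,v_3\,v_2$ is a $6$-cycle with chord $v_1v_2$. If two $4$-cycles $v_1v_2v_3v_4$ and $v_1v_2v_5v_6$ shared the edge $v_1v_2$, then $v_4\,v_3\,v_2\,v_5\,v_6\,v_1$ is a $6$-cycle with chord $v_1v_2$. In each case, once the six listed vertices are pairwise distinct the chord construction yields a chordal $6$-cycle, contradicting the hypothesis.

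The main obstacle, and the only place where $\delta(G)\geq 3$ together with the first two assertions is really needed, is the degenerate analysis ruling out coincidences among the vertices of the constructed $6$-cycle. When an ``outer'' vertex of one cycle coincides with an outer vertex of the other, one of three things happens: the two cycles share a second edge, so they are no longer adjacent in the required sense; a new edge appears that is a diagonal of one of the two $4$-cycles, producing a chordal $4$-cycle directly (for instance, in the $4$/$4$ case the coincidences $v_3=v_6$ and $v_4=v_5$ force the edges $v_1v_3$ and $v_2v_4$ respectively, each a chord of a $4$-cycle); or, in the $3$/$5$ case, the apex-opposite coincidence $t=v_4$ forces two triangles sharing the edge $v_2v_4$, which is excluded by the first assertion. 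Enumerating these finitely many coincidences and checking in each that $\delta(G)\geq 3$, the chordless-$4$-cycle property, and the no-adjacent-triangles fact together forbid it is the technical heart of the proof; the clean non-degenerate cases are immediate from the chord constructions above.
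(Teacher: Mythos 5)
The paper states Lemma~\ref{lemma1} without any proof, so there is no in-text argument to measure yours against; your mechanism --- glue the two cycles along the shared edge and observe that the shared edge becomes a chord of the resulting $4$- or $6$-cycle --- is surely the intended one, and your treatment of the first two assertions is correct: coincident apices produce two triangles sharing an edge, and an apex landing on the $4$-cycle produces a chord of that $4$-cycle. (You should also dispose explicitly of the case where both triangles sit on the \emph{same} edge of the $4$-cycle, but that is immediate from the first assertion.)

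The genuine gap is in the ``furthermore'' part, which is exactly where $\delta(G)\geq 3$ is supposed to do work --- and notice that your proof never actually uses that hypothesis anywhere. The degenerate cases you dismiss with ``the two cycles share a second edge, so they are no longer adjacent in the required sense'' are not harmless: if $t=v_3$, the triangle $v_1v_2v_3$ shares the edges $v_1v_2$ and $v_2v_3$ with the $5$-cycle, and the resulting configuration (a $5$-cycle with the chord $v_1v_3$) contains no chordal $4$- or $6$-cycle at all --- its only cycles are a triangle, a chordless $4$-cycle $v_1v_3v_4v_5$, and the $5$-cycle itself; likewise two $4$-cycles through a common path $v_1v_2v_3$ form a $K_{2,3}$, which has no chordal $4$-cycle and no $6$-cycle. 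So under the usual reading ``adjacent $=$ sharing an edge'' these cases are neither excluded by definition nor refuted by your construction, and the lemma as a statement about abstract cycles would be in jeopardy. The way the lemma is used in the paper (bounds on $f_3(v)$, $f_4(v)$, $f_5(v)$; faces adjacent to faces) shows it is really a statement about faces of the plane embedding, and there the missing step is precisely where $\delta(G)\geq3$ enters: two distinct faces whose boundaries both contain the edges $v_2v_1$ and $v_2v_3$ with $v_2$ between them would have to occupy both corners of the rotation at $v_2$, forcing $d(v_2)=2$. You need to supply that argument (or else adopt and justify a definition of adjacency excluding a second shared edge, and check the applications survive); as written, the degenerate cases of the ``furthermore'' clause are asserted rather than proved.
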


By Lemma~\ref{lemma1}, we have the following lemma.

\begin{lemma}\label{lemma01} Let $G$ be a planar graph with $\delta(G)\geq3$ and $f$ be a $3$-face
which is incident with a $3$-vertex in $G$. Then $f$ is adjacent
to at least one $6^+$-face.
\end{lemma}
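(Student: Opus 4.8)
The plan is to locate, among the faces sharing an edge with $f$, one that is forced to be large, by exploiting all three restrictions supplied by Lemma~\ref{lemma1}. Throughout I work in the setting where face boundaries are cycles (as holds in the $2$-connected graphs to which this lemma is applied), so that a $k$-face corresponds to a $k$-cycle and two faces sharing an edge correspond to two adjacent cycles. Let $u,v,w$ be the vertices incident with $f$, where $d(u)=3$, so the two edges of $f$ at $u$ are $uv$ and $uw$, and let $ux$ be the unique remaining edge at $u$. Since $d(u)=3$, the edges $uv$, $uw$, $ux$ occur consecutively in the rotation around $u$, with $f$ occupying the corner between $uv$ and $uw$. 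Consequently the face $f_1$ meeting $f$ across $uv$ is the corner between $uv$ and $ux$, and the face $f_2$ meeting $f$ across $uw$ is the corner between $uw$ and $ux$.

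The key observation I would isolate is that both $f_1$ and $f_2$ contain the edge $ux$ on their boundaries: they lie on the two sides of $ux$, and hence are adjacent to one another as well as each being adjacent to $f$. (They are distinct, since in the graph under consideration $ux$ is not a bridge, so its two sides are different faces.) With this in hand, I would rule out the small sizes for $f_1$ and $f_2$ in turn. Each is adjacent to the $3$-face $f$, so by the first statement of Lemma~\ref{lemma1} neither is a $3$-face, and since $\delta(G)\geq 3$ the clause forbidding a $3$-cycle adjacent to a $5$-cycle shows neither is a $5$-face. Therefore each of $f_1$, $f_2$ is a $4$-face or a $6^+$-face.

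It remains to exclude the possibility that both are $4$-faces. If they were, then $f_1$ and $f_2$ would be two adjacent $4$-faces, contradicting the clause of Lemma~\ref{lemma1} (again using $\delta(G)\geq 3$) that forbids a $4$-cycle adjacent to a $4$-cycle. Hence at least one of $f_1$, $f_2$ is a $6^+$-face, and as both are adjacent to $f$ this produces the required $6^+$-face adjacent to $f$. The step I expect to be the main obstacle is the bookkeeping around the $3$-vertex $u$ that identifies $f_1$ and $f_2$ as the two faces meeting along the third edge $ux$ and hence adjacent to each other; once that adjacency is established, the forbidden-configuration clauses of Lemma~\ref{lemma1} close the argument immediately.
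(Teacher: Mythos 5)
Your argument is correct and is exactly the derivation the paper intends: the paper gives no explicit proof, stating only that the lemma follows from Lemma~\ref{lemma1}, and your identification of the two faces across $uv$ and $uw$ as sharing the third edge $ux$ at the $3$-vertex, then excluding $3$-, $5$-, and double-$4$-face possibilities via the clauses of Lemma~\ref{lemma1}, is precisely that intended argument. Your explicit caveat about boundary walks being cycles (the bridge case) is a point the paper itself glosses over, so no discrepancy there.
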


\begin{lemma}\label{lemma2} Let $G$ be a planar graph without chordal $4$- and $6$-cycles.
If $\delta(G)\geq 4$, then $G$ contains the configuration $H$
depicted in Figure 1.
\end{lemma}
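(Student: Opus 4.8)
The plan is to argue by contradiction via the discharging method. Suppose $G$ is a planar graph with $\delta(G)\geq 4$ and without chordal $4$- or $6$-cycles, yet $G$ contains no copy of the configuration $H$ of Figure~1; we may assume $G$ is connected. Fix a plane embedding and, using Euler's formula $|V(G)|-|E(G)|+|F(G)|=2$ together with the handshaking identities $\sum_{v}d(v)=\sum_{f}d(f)=2|E(G)|$, assign to every vertex and every face $x$ the initial charge $\mu(x)=d(x)-4$. A routine computation then gives $\sum_{x}\mu(x)=4|E(G)|-4(|V(G)|+|F(G)|)=-8<0$. Because $\delta(G)\geq 4$, every vertex carries non-negative charge, so all of the negative charge is concentrated on the $3$-faces, each of which has charge exactly $-1$.

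Next I would design discharging rules that transfer charge from vertices and from large faces to the $3$-faces that are in deficit. The restrictions provided by Lemma~\ref{lemma1} are exactly what make this feasible: since no two $3$-cycles are adjacent, no $4$-cycle is adjacent to a $4$-cycle, and (as $\delta(G)\geq 4\geq 3$) no $3$-cycle is adjacent to a $5$-cycle, each $3$-face is surrounded only by $4$-faces and $6^+$-faces, and the way in which $3$- and $4$-faces can cluster around a common vertex is tightly constrained. A natural rule set sends a fixed amount, say $\frac{1}{2}$ or $\frac{1}{3}$, along each incident edge from every $5^+$-vertex to an adjacent $3$-face, together with secondary rules governing the contributions of $4$-vertices and of the surrounding $6^+$-faces; the precise fractions are chosen so that any $3$-face whose incident vertices are not all of the lowest degree recovers its full deficit of $1$.

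The main obstacle, and the heart of the argument, is to verify that after discharging every vertex and every face ends with non-negative charge. For faces this reduces to a face-by-face check that each $3$-face receives at least $1$ and that each large face retains enough charge after supplying its adjacent $3$-faces; for vertices one must confirm that no vertex is drained below zero, which rests on bounding the number of $3$-faces incident with a given vertex by means of the adjacency restrictions above. The configuration $H$ is precisely the local obstruction to this scheme: it encodes the arrangement of low-degree vertices around $3$-faces for which no admissible discharging can restore non-negativity. Hence, under the assumption that $H$ does not appear, every final charge is non-negative, so $\sum_{x}\mu(x)\geq 0$, contradicting the value $-8$ computed above. This contradiction forces $G$ to contain $H$, which proves the lemma.
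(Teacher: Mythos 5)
Your overall strategy (contradiction via Euler's formula plus discharging, leaning on the adjacency restrictions of Lemma~\ref{lemma1}) is the same as the paper's, but what you have written is a plan for a proof rather than a proof. The entire mathematical content of this lemma lives in the two steps you defer: the explicit discharging rules and the case-by-case verification that every vertex and face ends with non-negative charge. Saying ``a natural rule set sends a fixed amount, say $\frac{1}{2}$ or $\frac{1}{3}$'' and ``the precise fractions are chosen so that\dots'' does not establish that such fractions exist, and the closing claim that ``the configuration $H$ is precisely the local obstruction to this scheme'' is exactly the assertion that needs to be proved. In the paper the absence of $H$ is used at one specific arithmetic pinch point: a $4$-vertex $v$ with $f_3(v)=1$ and $f_4(v)=2$ cannot simultaneously pay $1$ to an incident $(4,4,4)$-face and $\frac{1}{2}$ to each of two incident $(4,4,4,4)$-faces, because $H$-freeness forbids a $(4,4,4)$-face from being adjacent to a $(4,4,4,4)$-face. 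Nothing in your sketch identifies where, or whether, your rules would break without the hypothesis on $H$.

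There is also a concrete reason to doubt that your particular initial charge $\mu(x)=d(x)-4$ can be pushed through. Under it every $4$-vertex and every $4$-face carries charge exactly $0$, so neither can give anything away. A $(4,4,4)$-face has deficit $1$, its three incident vertices are bankrupt, and by Lemma~\ref{lemma1} its three neighbouring faces are $4$-faces or $6^+$-faces; the hypothesis that $H$ is absent only guarantees that a neighbouring $4$-face is not a $(4,4,4,4)$-face, i.e.\ that it contains some $5^+$-vertex, but that vertex need not be incident with (or even adjacent to) the deficient triangle, so none of the ``vertex gives to incident face'' or ``big face gives to adjacent face'' rules you describe can reach it. You would need relay rules through zero-charge $4$-faces, and it is not at all clear these can be made to balance. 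The paper sidesteps this by taking $w(v)=2d(v)-6$ and $w(f)=d(f)-6$, which endows every $4$-vertex with charge $2$ and makes the triangle's own vertices the payers; that choice is not cosmetic but essential to the argument. As it stands, your proposal has a genuine gap and does not prove the lemma.
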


\begin{proof} Suppose to the contrary that $G$ does not contain
the configuration $H$ depicted in Figure 1, i.e. none of the
$(4,4,4)$-faces is adjacent to a $(4,4,4,4)$-face.

\begin{figure}[htbp]
  \begin{center}
    \includegraphics[width=2.5cm]{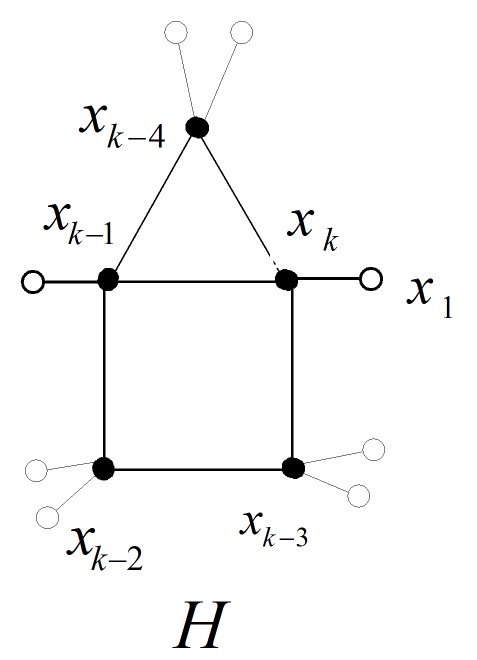}
    \caption{}
  \end{center}
\end{figure}

By Euler's formula, we have
\begin{eqnarray}\label{formula0}
\sum_{v\in V(G)}(2d(v)-6)+\sum_{f\in
F(G)}(d(f)-6)=-6(|V|-|E|+|F|)=-12.
\end{eqnarray}

\vspace{2mm} Define an initial charge function $w$ on $V(G)\cup
F(G)$ by setting $w(v)=2d(v)-6$ if $v\in V(G)$ and $w(f)=d(f)-6$
if $f\in F(G)$, then $\sum_{x\in V(G)\cup F(G)}w(x)=-12$ by
Equation~(\ref{formula0}). Now redistribute the charges according
to the following discharging rules.

\noindent $D1.$ If $f$ is a $3$-face incident with a vertex $v$,
then $v$ gives $1$ to $f$ if $d(v)=4$ and $f$ is a $(4,4,4)$-face,
$v$ gives $\frac{3}{4}$ if $d(v)=4$ and $f$ is a $3$-face of
another type, and $v$ gives $\frac{3}{2}$ if $d(v)\geq5$.

\noindent $D2.$ If $f$ is a $4$-face incident with a vertex $v$,
then $v$ gives $\frac{1}{2}$ to $f$ if $d(v)=4$ and $f$ is a
$(4,4,4,4)$-face, $v$ gives $\frac{2}{5}$ if $d(v)=4$ and $f$ is a
$4$-face of another type, and $v$ gives $\frac{4}{5}$ if
$d(v)\geq5$.

\noindent $D3.$ Transfer $\frac{1}{5}$ from each vertex $v$ to the
$5$-face which is incident with $v$.

\vspace{2mm} Let the new charge of each element $x\in V(G)\cup
F(G)$ be $w'(x)$. In the following, we will show that $\sum_{x\in
V(G)\cup F(G)}w'(x)\geq0$, a contradiction to
Equation~(\ref{formula0}). This
will complete the proof.\\

Consider any vertex $v\in V(G)$, \textbf{suppose $d(v)=4$.} Then
$w(v)=2$, $f_3(v)\leq 2$ by Lemma~\ref{lemma1}.

First, we assume that $f_3(v)=2$. Then $f_4(v)=0$ and $f_5(v)=0$
by Lemma~\ref{lemma1}. Thus $w'(v)\geq 2-1\times2=0$ by $D1$.

Now we assume that $f_3(v)=1$. Then $f_4(v)\leq 2$ by
Lemma~\ref{lemma1}. If $f_4(v)=2$, then $f_5(v)\leq 1$. Since $G$
does not contain the configuration $H$ depicted in Figure 1, thus
$w'(v)\geq 2-1-\frac{2}{5}\times2-\frac{1}{5}=0$ or
$w'(v)\geq2-\frac{3}{4}-\frac{1}{2}\times2-\frac{1}{5}=\frac{1}{20}>0$
by $D1$, $D2$ and $D3$. If $f_4(v)\leq1$, then $f_5(v)\leq 1$ by
Lemma~\ref{lemma1}. Thus $w'(v)\geq
2-1-\frac{1}{2}-\frac{1}{5}=\frac{3}{10}>0$ by $D1$, $D2$ and
$D3$.

Now we assume that $f_3(v)=0$. Then $f_4(v)\leq 2$, $f_5(v)\leq4$
by Lemma~\ref{lemma1}. Thus $w'(v)>
2-\frac{1}{2}\times2-\frac{1}{5}\times4=\frac{1}{5}>0$ by $D2$ and
$D3$.\\

\textbf{Suppose $d(v)=5$.} Then $w(v)=4$, $f_3(v)\leq2$ by
Lemma~\ref{lemma1}. If $f_3(v)=2$, then $f_4(v)\leq1$ and
$f_5(v)=0$ by Lemma~\ref{lemma1}. Thus $w'(v)\geq
4-\frac{3}{2}\times2-\frac{4}{5}=\frac{1}{5}>0$ by $D1$ and $D2$.
If $f_3(v)=1$, then $f_4(v)\leq2$ and $f_5(v)\leq2$ by
Lemma~\ref{lemma1}. Thus $w'(v)\geq
4-\frac{3}{2}-\frac{4}{5}\times2-\frac{1}{5}\times2=\frac{1}{2}>0$
by $D1$, $D2$ and $D3$. If $f_3(v)=0$, then $f_4(v)\leq2$ and
$f_5(v)\leq5$ by Lemma~\ref{lemma1}. Thus $w'(v)>
4-\frac{4}{5}\times2-\frac{1}{5}\times5=\frac{7}{5}>0$ by $D2$ and
$D3$.\\

\textbf{Suppose $d(v)\geq6$.} Then $w(v)=2d(v)-6$, $f_4(v)\leq
d(v)-2f_3(v)$, $f_5(v)\leq d(v)-2f_3(v)$ by Lemma~\ref{lemma1}. So
$w'(v)\geq
2d(v)-6-\frac{3}{2}f_3(v)-\frac{4}{5}f_4(v)-\frac{1}{5}f_5(v)\geq
d(v)-6+\frac{1}{2}f_3(v)\geq d(v)-6\geq0$ by $D1$, $D2$ and
$D3$.\\

Consider any face $f\in F(G)$, \textbf{suppose $d(f)=3$.} Then
$w(f)=-3$. If $f$ is a $(4,4,4)$-face, then $w'(f)=-3+1\times3=0$
by $D1$. Otherwise,
$w'(f)\geq -3+\frac{3}{4}+\frac{3}{4}+\frac{3}{2}=0$  by $D1$.\\

\textbf{Suppose $d(f)=4$.} Then $w(f)=-2$. If $f$ is a
$(4,4,4,4)$-face, we have that $w'(f)\geq -2+\frac{1}{2}\times4=0$
by $D2$. Otherwise, $w'(f)\geq
-2+\frac{2}{5}\times3+\frac{4}{5}=0$ by $D2$.\\

\textbf{Suppose $d(f)=5$.} Then $w(f)=-1$. We have $w'(f)\geq
-1+\frac{1}{5}\times5=0$ by $D3$.\\

\textbf{Suppose $d(f)\geq6$.} Then $w'(f)=w(f)\geq0$.
\end{proof}

\begin{lemma}\label{jun1} (\cite{jun1}) Let $S=\{x_1, x_2, \cdots, x_k\}$ be a set of $k$
different vertices in $G$ such that $G-S$ has an equitable
$k$-coloring. If $|N_G(x_i)-S|\leq k-i$ for $1\leq i\leq k$, then
$G$ has an equitable $k$-coloring.
\end{lemma}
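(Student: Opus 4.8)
The plan is to extend the given equitable $k$-coloring of $G-S$ to all of $G$ by coloring the vertices of $S$ one at a time, in the order $x_1, x_2, \ldots, x_k$, while insisting that these $k$ vertices receive $k$ \emph{distinct} colors. The reason for demanding distinct colors is that it keeps the coloring equitable for free: if $V_1, \ldots, V_k$ are the color classes of the equitable $k$-coloring of $G-S$ and each class absorbs exactly one of the $x_i$, then every class size increases by exactly $1$, so all pairwise differences $||V_i|-|V_j||$ are unchanged and the extended coloring is again equitable.

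The work is then a greedy coloring together with one counting estimate. When it is $x_i$'s turn, two kinds of colors are forbidden. First, the colors already assigned to $x_1, \ldots, x_{i-1}$ must be avoided to keep the $k$ chosen colors distinct; there are at most $i-1$ of these. Second, the colors used by the neighbors of $x_i$ lying in $G-S$ must be avoided for properness, and these number at most $|N_G(x_i)-S|\le k-i$. Hence at most $(i-1)+(k-i)=k-1$ colors are forbidden at step $i$, so since each list has $k$ colors, at least one color always remains available for $x_i$.

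Choosing such an available color for each $x_i$ in turn yields a proper coloring of $G$: its restriction to $G-S$ is the original proper coloring, each $x_i$ avoids the colors of its neighbors in $G-S$ by the greedy choice, and each $x_i$ avoids the colors of its already-colored neighbors $x_j$ with $j<i$ because those colors are among the forbidden ones at step $i$ (neighbors $x_j$ with $j>i$ are dealt with when they are later colored). Together with the equitability argument from the first paragraph, this completes the proof. There is no genuine obstacle here; the entire content of the lemma lies in the bookkeeping, and the only point to verify is that the hypothesis $|N_G(x_i)-S|\le k-i$ is calibrated precisely so that the external-neighbor bound $k-i$ and the distinctness budget $i-1$ always sum to $k-1$, leaving one free color at every step regardless of $i$.
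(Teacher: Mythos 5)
Your argument is correct and is the standard proof of this lemma; the paper itself does not prove it but imports it from the cited reference, and your greedy extension in the order $x_1,\ldots,x_k$ with the $k$ vertices of $S$ receiving distinct colors is exactly the intended argument. The two key points are both in place: the count $(i-1)+(k-i)=k-1<k$ of forbidden colors at step $i$ guarantees an available color (and simultaneously handles properness toward earlier-colored vertices of $S$), and assigning distinct colors to the $x_i$ increases every color class by exactly one, which preserves equitability.
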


\begin{lemma}\label{kostochka1} (\cite{kostochka}) Let $G$ be a graph with
a $k$-uniform list assignment $L$. Let $S=\{x_1,
x_2,\cdots,x_k\}$, where $x_1, x_2,\cdots,x_k$ are distinct
vertices in $G$. If $G-S$ has an equitable $L$-coloring and
$|N_G(x_i)-S|\leq k-i$ for $1\leq i\leq k$, then $G$ has an
equitable $L$-coloring.
\end{lemma}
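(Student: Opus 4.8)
The plan is to keep the given equitable $L$-coloring of $G-S$ and extend it to the $k$ vertices of $S$ by assigning them \emph{pairwise distinct} colours, one colour per vertex. The reason for insisting on distinctness is a counting one: writing $n=|V(G)|$, an equitable $L$-coloring of $G-S$ uses each colour at most $\lceil (n-k)/k\rceil=\lceil n/k\rceil-1$ times, so if each colour is used at most once on $S$, then every colour class of the extended coloring has size at most $\lceil n/k\rceil$. Hence it suffices to produce a colouring $c(x_1),\dots,c(x_k)$ with $c(x_i)\in L(x_i)$, with the $c(x_i)$ pairwise distinct, and with $c(x_i)$ different from the colour of every neighbour of $x_i$ lying in $G-S$. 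Note that once the $c(x_i)$ are forced to be pairwise distinct, properness \emph{inside} $S$ is automatic, so only the edges from $S$ to $G-S$ impose genuine constraints.

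Then I would colour the vertices greedily in the order $x_1,x_2,\dots,x_k$. When it is $x_i$'s turn, the colours it must avoid are of two kinds: the colours already placed on $x_1,\dots,x_{i-1}$, of which there are exactly $i-1$ (distinct by construction), and the colours appearing on the neighbours of $x_i$ in $G-S$, of which there are at most $|N_G(x_i)-S|\le k-i$. Thus at most $(i-1)+(k-i)=k-1$ colours are forbidden, while $|L(x_i)|=k$, so some admissible colour remains for $x_i$; choosing it keeps all used colours on $S$ distinct. Iterating through $i=1,\dots,k$ colours all of $S$ and yields the desired proper $L$-coloring of $G$ meeting the equitability ceiling.

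The argument is short, and its only real content is the hypothesis $|N_G(x_i)-S|\le k-i$, which is precisely calibrated so that the ``external'' constraint $k-i$ and the ``internal'' (distinctness) constraint $i-1$ always sum to at most $k-1$. This is exactly why the ordering matters: $x_1$, with possibly many neighbours outside $S$, is coloured while no internal colours are yet committed, whereas $x_k$, forced to differ from all $k-1$ earlier choices, is required to have no neighbour outside $S$. The one place to be careful is the cardinality identity $\lceil (n-k)/k\rceil=\lceil n/k\rceil-1$, valid whether or not $k$ divides $n$, which is what guarantees that adding one vertex per colour class cannot breach the $\lceil n/k\rceil$ bound. I expect no genuine obstacle beyond this bookkeeping; the same scheme proves the non-list analogue (the preceding lemma) verbatim, with ``equitable $k$-coloring'' in place of ``equitable $L$-coloring.''
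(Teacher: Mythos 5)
Your proof is correct: the greedy extension in the order $x_1,\dots,x_k$ with pairwise distinct colours on $S$, using the count $(i-1)+(k-i)=k-1<|L(x_i)|$ together with the identity $\lceil (n-k)/k\rceil=\lceil n/k\rceil-1$, is exactly the standard argument for this lemma. The paper itself gives no proof (it cites \cite{kostochka}), and your argument matches the one in that source, so there is nothing to add.
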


\begin{lemma}(\cite{borodin})\label{lemborodin} Every planar graph without adjacent triangles is
$4$-degenerate.
\end{lemma}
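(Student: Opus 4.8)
The plan is to prove the equivalent statement that every planar graph in which no two triangles share an edge has a vertex of degree at most $4$; since this class is closed under taking subgraphs (deleting vertices or edges can never create a shared edge between two triangles), the $4$-degeneracy follows at once. So I would take a putative counterexample $G$ with $\delta(G)\ge 5$, assume without loss of generality that $G$ is connected, and embed it in the plane. The engine of the argument is a discharging computation on Euler's formula, of the same flavour as the proof of Lemma~\ref{lemma2}, but much shorter.

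First I would record the global charge identity. From $|V|-|E|+|F|=2$ together with $\sum_v d(v)=\sum_f d(f)=2|E|$ one obtains
\[
\sum_{v\in V(G)}(d(v)-4)+\sum_{f\in F(G)}(d(f)-4)=4(|E|-|V|-|F|)=-8.
\]
Assigning initial charges $w(v)=d(v)-4$ and $w(f)=d(f)-4$, the only elements carrying negative charge are the $3$-faces (each with $-1$), while every vertex already carries charge at least $1$ because $\delta(G)\ge 5$. The single rule I would apply is that every vertex sends $\frac13$ to each incident $3$-face. Then each triangle collects $3\cdot\frac13=1$ and ends at $-1+1=0$, while faces of size at least $4$ are untouched and keep nonnegative charge.

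The crucial point, and the only place the hypothesis is used, is to bound the charge a vertex gives away. Because no two triangles share an edge, the triangular faces around a vertex $v$ cannot occupy two consecutive corners in the rotation at $v$: if the faces on either side of an edge $vu$ were both triangles, that edge would lie in two triangles. Hence at most $\lfloor d(v)/2\rfloor$ of the faces incident with $v$ are $3$-faces, so $v$ retains charge at least $d(v)-4-\frac13\lfloor d(v)/2\rfloor$, which is positive for every $d(v)\ge 5$ (at $d(v)=5$ it equals $1-\frac23=\frac13$, and it only grows as $d(v)$ increases). Thus after discharging every element has nonnegative charge, with vertices strictly positive, contradicting the total $-8$ and completing the proof.

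I expect the only genuine obstacle to be the combinatorial bound $f_3(v)\le\lfloor d(v)/2\rfloor$, which must be argued carefully from the ``no shared edge'' hypothesis through the cyclic order of faces at $v$; everything else is bookkeeping in a single discharging rule. A minor technical point is the reduction to a connected plane graph on which Euler's formula and the face-degree count are valid; since $\delta(G)\ge 5$ forces $G$ to contain cycles, each relevant component has well-defined faces, so I would simply run the discharging argument on such a component.
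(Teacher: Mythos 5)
The paper does not prove this lemma at all: it is imported as a black box from Borodin's work (the citation \cite{borodin}), so there is no internal proof to compare against. Your proposal supplies a complete and correct self-contained argument, and it is essentially the standard discharging proof of Borodin's result. The reduction to ``every such graph has a vertex of degree at most $4$'' via closure under subgraphs is right, the charge identity $\sum_v(d(v)-4)+\sum_f(d(f)-4)=-8$ is correct for a connected plane graph, and the single rule (each vertex sends $\frac13$ to each incident $3$-face) balances every $3$-face exactly while leaving each vertex of degree $d\ge 5$ with at least $d-4-\frac13\lfloor d/2\rfloor>0$. The one step you rightly flag as needing care, $f_3(v)\le\lfloor d(v)/2\rfloor$, does go through: two consecutive triangular corners at $v$ would give two faces sharing the edge $vu$; these faces must be distinct, since a face seeing both sides of $vu$ would make $vu$ a bridge and a $3$-face cannot traverse an edge twice in a simple graph, and a $3$-face cannot occupy two corners at $v$ either. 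Note also that you use only the weaker hypothesis that no two triangular \emph{faces} of the chosen embedding share an edge, so your argument in fact proves a slightly stronger statement than the lemma as quoted. The only thing your write-up buys beyond the paper is self-containedness; the paper's choice to cite the result costs nothing logically but leaves the reader to look up Borodin.
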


By Lemma~\ref{lemborodin}, we have the following corollary.

\begin{corollary}\label{cor4degenerate}
Let $G$ be a planar graph without chordal $4$-cycles. Then $G$ is
$4$-degenerate.
\end{corollary}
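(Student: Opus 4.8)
The plan is to deduce this corollary from Lemma~\ref{lemborodin} by showing that the hypothesis ``no chordal $4$-cycle'' forces the stronger-looking combinatorial condition appearing there, namely ``no adjacent triangles.'' Recall from the preliminaries that a chordal $4$-cycle is a $4$-cycle $C$ admitting an edge $xy \in E(G)-E(C)$ with $x,y \in V(C)$; for a $4$-cycle this means one of its two diagonals is an edge of $G$. Two triangles are \emph{adjacent} when they share a common edge, so the natural bridge between the two notions is to observe that a pair of edge-sharing triangles always produces a diagonal-bearing $4$-cycle.

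Concretely, I would argue by contraposition. First I would suppose that $G$ contains two adjacent triangles, say $T_1 = uvw$ and $T_2 = uvx$ sharing the edge $uv$, where $w \neq x$ (the apices are distinct because the triangles are distinct and share the full base $uv$). Since $G$ is simple, the four vertices $u,v,w,x$ are pairwise distinct: $w$ and $x$ differ from both $u$ and $v$ as they are the third vertices of triangles on the base $uv$, and $w \neq x$ by assumption. The four edges $wu, ux, xv, vw$ all lie in $E(G)$ (the first and last from $T_1$, the middle two from $T_2$), so $C = w\,u\,x\,v\,w$ is a genuine $4$-cycle on four distinct vertices.

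The key step is then to exhibit the chord. The edge $uv$ belongs to $E(G)$ but is not one of the four cycle-edges $\{wu, ux, xv, vw\}$, so $uv \in E(G)-E(C)$ with $u,v \in V(C)$. By definition this makes $C$ a chordal $4$-cycle, contradicting the hypothesis on $G$. Hence $G$ has no two adjacent triangles.

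Finally I would invoke Lemma~\ref{lemborodin}: since $G$ is planar and contains no adjacent triangles, it is $4$-degenerate, completing the proof. I do not expect any serious obstacle here; the only point requiring care is to pin down the intended meaning of ``adjacent triangles'' as \emph{edge-sharing} triangles and to verify cleanly that the resulting $4$-cycle has four distinct vertices and that its diagonal $uv$ is a chord in the precise sense used by the paper.
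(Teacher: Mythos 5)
Your proof is correct and is exactly the argument the paper intends: the paper derives the corollary in one line from Lemma~\ref{lemborodin}, and your observation that two edge-sharing triangles on a common base $uv$ with distinct apices form a $4$-cycle having $uv$ as a chord is precisely the missing bridge from ``no chordal $4$-cycles'' to ``no adjacent triangles.''
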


\begin{lemma}\label{lemma3} Let $G$ be a connected planar graph with order at least
$5$ and without chordal $4$- and $6$-cycles. If $\delta(G)\leq3$,
then $G$ has at least one of the configurations depicted in Figure
2.
\end{lemma}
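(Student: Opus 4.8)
The plan is to proceed by contradiction through a discharging argument, in the same spirit as the proof of Lemma~\ref{lemma2}. Assume $G$ is a connected planar graph of order at least $5$, free of chordal $4$- and $6$-cycles, with $\delta(G)\leq 3$, and suppose $G$ contains none of the configurations of Figure~2. The first step is to translate this assumption into local structure: the absence of each listed configuration pins down, for every vertex of degree at most $3$, how its neighbors are distributed by degree and which small faces can surround it. Together with Lemma~\ref{lemma1} (no two adjacent $3$-cycles, no $4$-cycle adjacent to two $3$-cycles, and, when $\delta(G)\geq 3$, no $3$-cycle adjacent to a $5$-cycle and no two adjacent $4$-cycles) and Lemma~\ref{lemma01} (every $3$-face through a $3$-vertex is adjacent to a $6^+$-face), these restrictions bound the numbers $f_3(v),f_4(v),f_5(v)$ of small faces at each vertex and control the degree profiles $(d_1,\cdots,d_n)$ of the small faces.

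I would keep the charge from Lemma~\ref{lemma2}: set $w(v)=2d(v)-6$ for $v\in V(G)$ and $w(f)=d(f)-6$ for $f\in F(G)$, so that Euler's formula yields $\sum_{x\in V(G)\cup F(G)}w(x)=-12$, exactly as in Equation~(\ref{formula0}). Under this weighting the deficient elements are the $1$- and $2$-vertices (charges $-4$ and $-2$) and the $3$-, $4$-, $5$-faces (charges $-3$, $-2$, $-1$); the $3$-vertices are neutral but, crucially, cannot donate, while $4^+$-vertices and $6^+$-faces hold the surplus. I would then design discharging rules that route charge from $4^+$-vertices and $6^+$-faces to the deficient faces and to the $1$- and $2$-vertices, with every rule calibrated against the sparsity that the absence of Figure~2 configurations guarantees; in particular the $6^+$-face supplied by Lemma~\ref{lemma01} is used to rescue each $3$-face that carries a non-paying $3$-vertex.

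The core of the argument is the verification that, after redistribution, $w'(x)\geq 0$ for every $x\in V(G)\cup F(G)$, contradicting the total $-12$. I would handle the faces first: a $3$-, $4$-, or $5$-face collects enough from its incident $4^+$-vertices once Lemma~\ref{lemma1} is used to limit how many $3^-$-vertices it can meet, with the adjacent $6^+$-face covering the shortfall for $3$-faces incident to a $3$-vertex. The $4^+$-vertices are then checked by bounding $f_3(v),f_4(v),f_5(v)$ through Lemma~\ref{lemma1}, just as in Lemma~\ref{lemma2}, and the order-at-least-$5$ hypothesis is invoked to exclude the few degenerate small configurations that the counting alone does not settle.

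The hard part will be the elements near low-degree vertices. A $3$-vertex ends at charge $0$ only because it donates nothing, which means every small face it lies on loses one potential donor and must be salvaged from its other incident vertices together with the $6^+$-face provided by Lemma~\ref{lemma01}; the $1$- and $2$-vertices must recover their full $-4$ or $-2$ deficit from the surplus of their (necessarily high-degree) neighbors and surrounding large faces. Identifying exactly which neighborhood patterns Figure~2 rules out, and confirming that this list leaves no uncovered case in the charge accounting around low-degree vertices and their incident small faces, is where the delicate casework concentrates; the remaining computations are routine arithmetic of the kind carried out in Lemma~\ref{lemma2}.
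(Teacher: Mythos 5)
Your overall strategy (discharging, contradiction with the Euler identity) matches the paper's, but there is a genuine gap in the central claim of your plan: you propose to keep the charge $w(v)=2d(v)-6$, $w(f)=d(f)-6$ from Lemma~\ref{lemma2} (total $-12$) and to verify $w'(x)\geq 0$ for \emph{every} element. This is not achievable, and the paper does not attempt it. The forbidden configurations of Figure~2 do not eliminate all deficient local structures; they only bound their multiplicity. Concretely: the absence of $H_1$ and $H_2$ guarantees only that there is \emph{at most one} special face (a $(3,3,5^+)$-, $(3,4,4)$-, $(3,4,5)$- or $(3,4,6)$-face), not that there are none, and such a face together with its special $3$-vertices necessarily ends with negative charge no matter how you route the surplus, since its incident low-degree vertices have nothing to give and Lemma~\ref{lemma01} supplies only one adjacent $6^+$-face. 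Likewise a $1$-vertex can never be rescued to nonnegative charge (the configurations do not force its neighbor to have enough surplus, and $H_{41}$ even permits two $1$-vertices), and a special $2$-vertex adjacent to a $4^-$-vertex stays negative. The paper's proof instead uses the normalization $w(v)=3d(v)-10$, $w(f)=2d(f)-10$ with total exactly $-20$, proves $w'(x)\geq 0$ only for the non-special elements, and then uses Claim~\ref{claim1}, Fact~\ref{fact37}, and the structure of $H_{39}$--$H_{41}$ to bound the \emph{total residual deficit} of the special elements strictly above $-20$ (e.g.\ $\geq -4$ in Case 1, $\geq -19$ in the worst subcases). Your target total of $-12$ leaves far too little room for this residual accounting even if you switched to that strategy.

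A second, smaller gap: you invoke the full strength of Lemma~\ref{lemma1} and Lemma~\ref{lemma01} throughout, but their stronger conclusions (no $3$-cycle adjacent to a $5$-cycle, no adjacent $4$-cycles, and the $6^+$-face adjacent to a $3$-face through a $3$-vertex) are stated only under the hypothesis $\delta(G)\geq 3$, whereas the lemma you are proving concerns $\delta(G)\leq 3$ and therefore must cover $\delta(G)\in\{1,2\}$. The paper handles this by splitting into four cases on $\delta(G)$ and the number of $1$- and $2$-vertices, establishing separate structural facts (Facts~\ref{fact31}--\ref{fact38}) and modified rules for the faces incident with $2$-vertices. Your proposal does not address how the face-degree bounds survive when $\delta(G)\leq 2$, which is precisely where the bulk of the paper's casework lives.
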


\vspace{-0.3cm}

\begin{proof}
Suppose to the contrary that $G$ does not contain the
configurations $H_1$ $\ldots$ $H_{41}$ depicted in Figure 2.

\begin{figure}[htbp]
  \begin{center}
    \includegraphics[width=13.5cm]{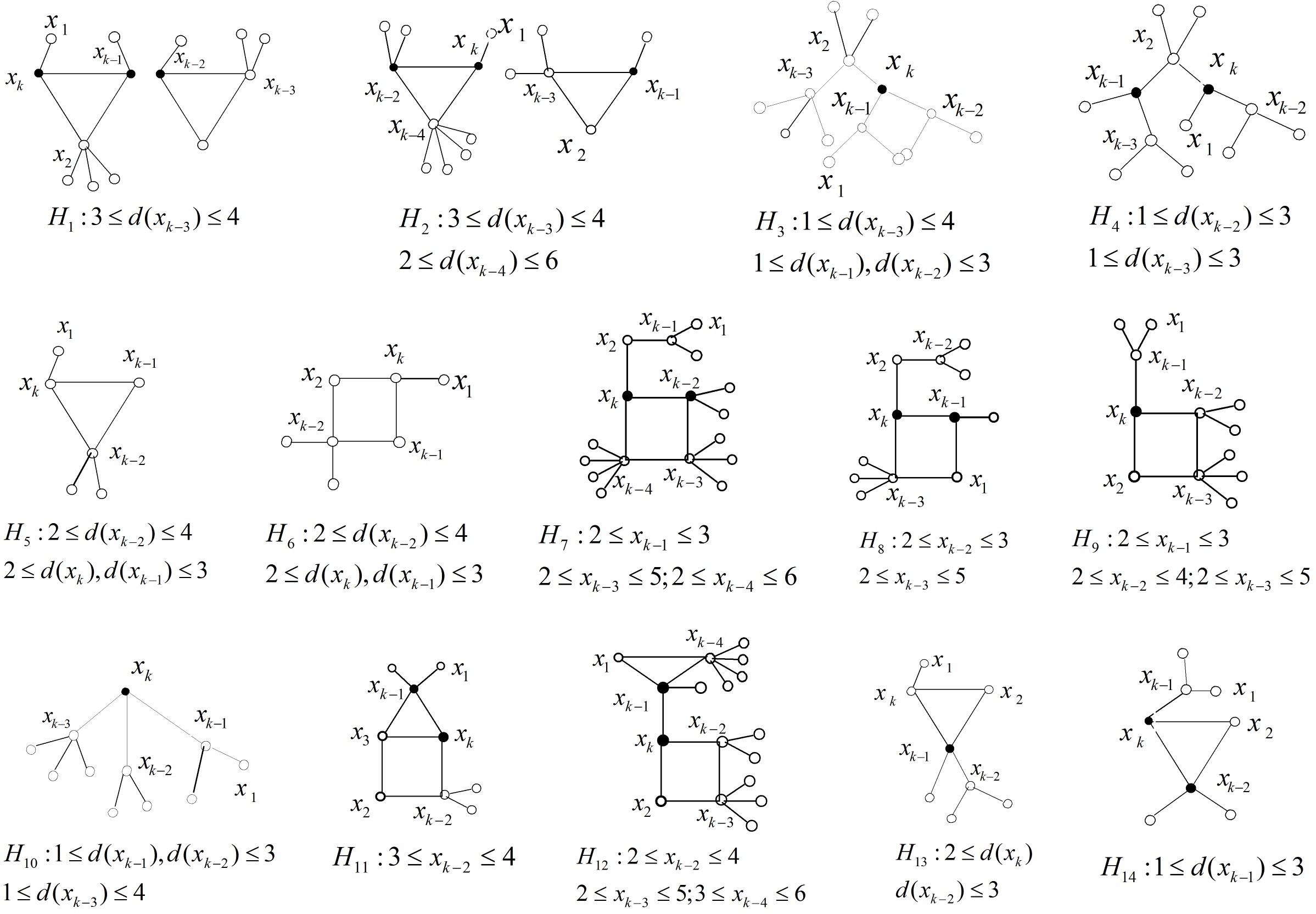}
  \end{center}
\end{figure}

\begin{figure}[htbp]
  \begin{center}
    \includegraphics[width=1\textwidth]{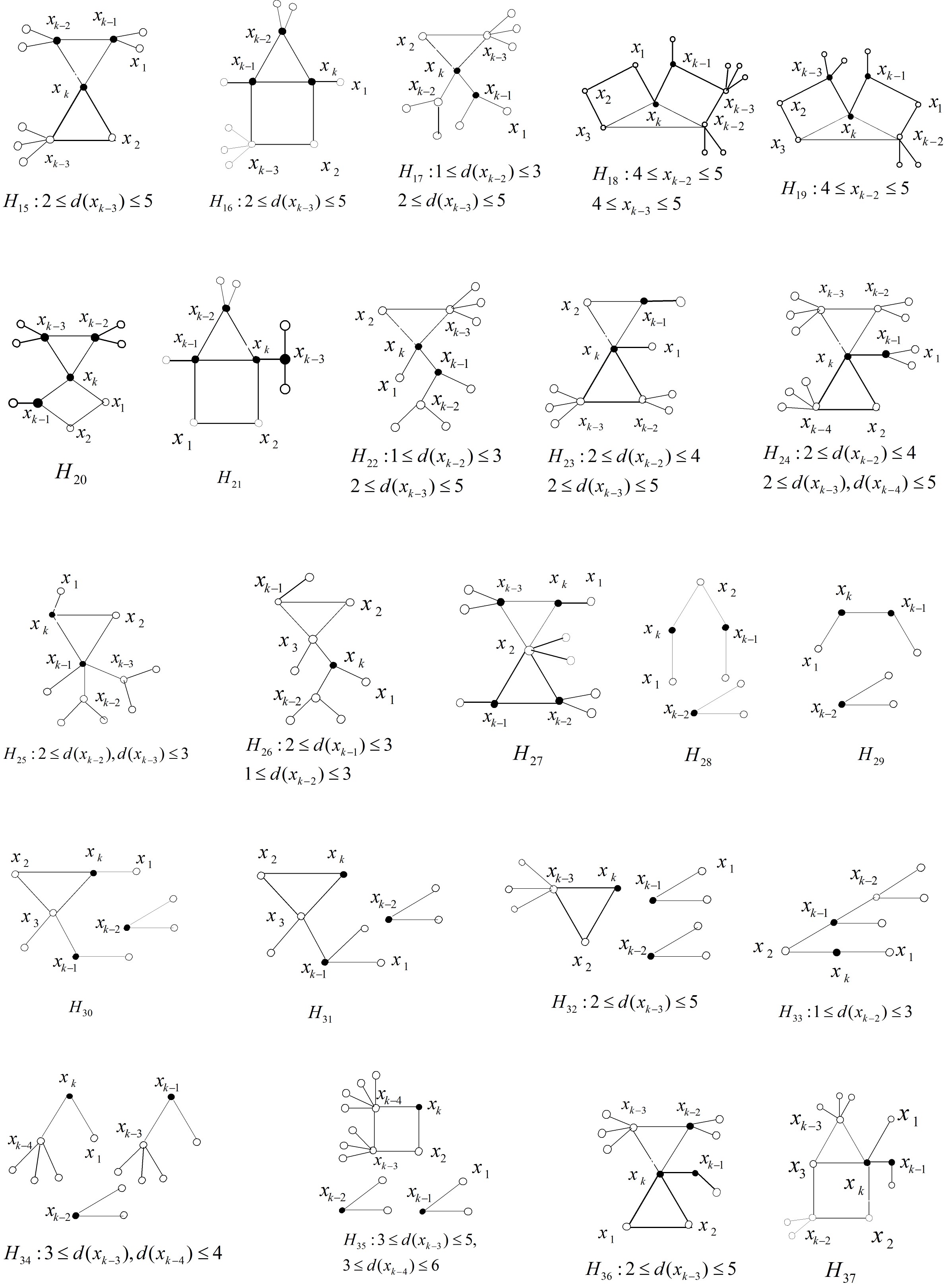}
  \end{center}
\end{figure}

\begin{figure}[htbp]
  \begin{center}
    \includegraphics[width=13.5cm]{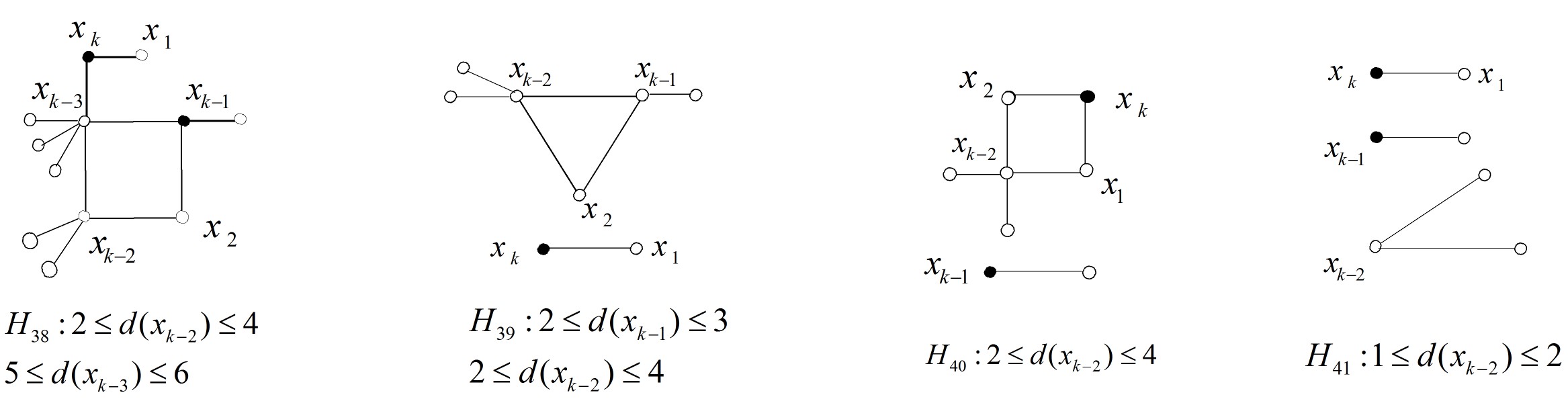}
    \caption{}
  \end{center}
\end{figure}

\emph{Each configuration depicted in Figure 2 is such that: $~$
(1) the vertices labelled $x_k, x_{k-1}, x_{k-2}$ are distinct and
the other vertices may coincide if they have the same degree and
multiple edges cannot be resulted in; $~$(2) solid vertices have
no incident edges other than the ones shown; and $~$(3) except for
being specially pointed, the degree of a hollow vertex may be any
integer from $[d, \Delta(G)]$, where $d$ is the number of edges
incident with the hollow vertex shown in the configuration; $~$(4)
the order of the vertices on the boundary of a $4$-face can be
rearranged except for the vertex which is also adjacent to other
labelled vertex that is not on
the boundary of the $4$-face.}\\

A face is said to be a $special$ $face$ if it is a
$(3,3,5^+)$-face, $(3,4,4)$-face, $(3,4,5)$-face or a
$(3,4,6)$-faces. In the following, we call a $3$-vertex a
$special$ $3$-$vertex$ if it is incident with a special face,
otherwise, it is called a $simple$ $3$-$vertex$.

Since $G$ contains neither $H_1$ nor $H_2$, we obtain the
following property.

\begin{claim}\label{claim1} There is at most one special face in
$G$.
\end{claim}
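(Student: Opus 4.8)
The plan is to proceed by contradiction: I assume that $G$ contains two distinct special faces $f_1$ and $f_2$ and show that this forces one of $H_1$ or $H_2$, contradicting the standing hypothesis that none of $H_1,\ldots,H_{41}$ occurs in $G$. The starting point is that, by definition, every special face is a $3$-face incident with a $3$-vertex; hence $f_1$ and $f_2$ are both triangles, each carrying at least one degree-$3$ vertex and with its three incident degrees drawn from the listed types $(3,3,5^+)$, $(3,4,4)$, $(3,4,5)$, $(3,4,6)$. The entire argument is then a classification of the possible positions of two such triangles relative to one another.

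First I would rule out that $f_1$ and $f_2$ share an edge: a common edge would yield two $3$-cycles sharing that edge, i.e.\ a $3$-cycle adjacent to a $3$-cycle, which is forbidden by Lemma~\ref{lemma1}; note that this part of Lemma~\ref{lemma1} requires no lower bound on $\delta(G)$, so it is available even though $\delta(G)\le 3$ here. Thus $f_1$ and $f_2$ either meet in exactly one vertex or are vertex-disjoint. In the shared-vertex case, let $v$ be the common vertex; since $f_1$ and $f_2$ share no edge, the two triangles use four distinct edges at $v$, so $d(v)\ge 4$ and the degree-$3$ vertices of $f_1,f_2$ are their non-shared vertices. Reading off the type of each triangle at $v$ and discarding, via Lemma~\ref{lemma1}, any arrangement that recreates a pair of adjacent triangles or a chordal $4$- or $6$-cycle, I would check that every admissible arrangement coincides with the drawn configuration $H_1$ after the identifications permitted by the conventions of Figure 2 (vertices may be merged only when they have equal degree and no multiple edge results). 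In the vertex-disjoint case the two triangles, each together with its $3$-vertex and prescribed neighbouring degrees, assemble into the configuration $H_2$. Either outcome contradicts the hypothesis, so $G$ has at most one special face.

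The step I expect to be the main obstacle is the bookkeeping of the shared-vertex case: one has to run through the admissible pairings of the four special-face types meeting at $v$, repeatedly apply Lemma~\ref{lemma1} to prune the arrangements that reintroduce adjacent triangles or a forbidden chord, and then confirm that each of the finitely many survivors really is a copy of $H_1$ under the allowed vertex identifications. The delicate point is completeness — verifying that no relative position of $f_1$ and $f_2$ escapes both $H_1$ and $H_2$ — and it is precisely the exclusion of edge-sharing by Lemma~\ref{lemma1} that keeps this enumeration finite and manageable.
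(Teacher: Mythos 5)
Your proposal takes essentially the same route as the paper, whose entire proof of this claim is the single observation that $G$ contains neither $H_1$ nor $H_2$; you merely flesh out the underlying case analysis (edge-sharing ruled out by Lemma~\ref{lemma1}, the remaining relative positions of two special faces realizing $H_1$ or $H_2$). The one caveat is that the completeness of your enumeration ultimately rests on the precise drawings of $H_1$ and $H_2$ in Figure~2, but that is exactly the same dependence the paper's own one-line proof relies on.
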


By Claim~\ref{claim1}, $G$ has at most two special $3$-vertices.
For convenience, let $n_3(v)$ denote the number of simple
$3$-vertices adjacent to $v$ for each $v\in V(G)$. Since $G$
contains neither $H_{3}$ nor $H_{4}$, we can conclude the
following properties.

\begin{claim}\label{claim2}  For each $v\in V(G)$ with $d(v)\geq4$, if
$v$ is adjacent to a simple $3$-vertex which is adjacent to two
$3^{--}$-vertices, then it is not adjacent to another
$4^{--}$-vertex.
\end{claim}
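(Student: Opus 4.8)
The plan is to prove Claim~\ref{claim2} by contradiction, producing one of the already-forbidden configurations $H_3$ or $H_4$ whenever the stated conclusion fails. So I would suppose that $v$ has $d(v)\geq 4$ and is adjacent to a simple $3$-vertex $u$ whose three neighbours are $v$ together with two $3^{--}$-vertices $y_1,y_2$, and assume, contrary to the claim, that $v$ is also adjacent to a further $4^{--}$-vertex $w$ with $w\neq u$. First I would record the local picture that this hypothesis forces: edges $vu$, $uy_1$, $uy_2$ and $vw$, with $d(u)=3$, $1\le d(y_1),d(y_2)\le 3$, $1\le d(w)\le 4$ and $d(v)\ge 4$. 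Because $d(u)=3$ and all three edges at $u$ are displayed, $u$ is a solid vertex in the drawing, while $v$ is a hollow $4^+$-vertex and $y_1,y_2,w$ are the low-degree neighbours.

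Next I would match this subgraph against the figures. Reading off the conventions stated after Figure~2 (a hollow vertex may carry any degree in $[d,\Delta(G)]$, a solid vertex has exactly the edges shown), the configuration just assembled is precisely the shape drawn as $H_3$ and $H_4$, the two pictures splitting according to $d(w)$ so that every admissible value $d(w)\in\{1,2,3,4\}$ is captured by one of them. Since $G$ contains none of $H_1,\dots,H_{41}$, in particular it contains neither $H_3$ nor $H_4$, which contradicts the existence of $w$ and establishes the claim.

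The hard part is not the matching but verifying the distinctness and coincidence bookkeeping. By the figure conventions only the labelled vertices $x_k,x_{k-1},x_{k-2}$ are required to be distinct, while the remaining vertices may coincide provided their degrees agree and no multiple edge is created; so I would check that the vertices realising the roles required to be distinct (the central $u$ and the two distinguished neighbours $u,w$ of $v$) are genuinely distinct here, and that the only permitted coincidences among $y_1,y_2,w$ are exactly those drawn into $H_3$ and $H_4$. The one place where I expect to use the full strength of the hypothesis is that $u$ is \emph{simple}, not merely a $3$-vertex: being incident with no special face rules out any accidental identification among $y_1,y_2,w,v$ that would turn $u$'s incident triangle into a $(3,3,5^+)$-, $(3,4,4)$-, $(3,4,5)$- or $(3,4,6)$-face and thereby push the local structure outside $H_3$ and $H_4$. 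Once these distinctness conditions are confirmed, the contradiction is immediate and Claim~\ref{claim2} follows.
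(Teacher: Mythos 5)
Your proposal matches the paper's own argument, which disposes of this claim in a single sentence: the paper simply observes that since $G$ contains neither $H_{3}$ nor $H_{4}$, Claims~2 and~3 follow, exactly the contradiction-via-forbidden-configuration route you describe. Your additional bookkeeping about distinctness and the role of ``simple'' is more than the paper supplies, and the only point you cannot fully discharge (nor can any reader of the source alone) is the literal identification of your assembled subgraph with the drawn configurations $H_3$ and $H_4$, which the paper likewise takes as read from Figure~2.
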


\begin{claim}\label{claim3} For any $v\in V(G)$ with $d(v)\geq4$, $v$ is adjacent to at
most one simple $3$-vertex which is adjacent to another
$3^{--}$-vertex.
\end{claim}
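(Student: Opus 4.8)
The plan is to argue by contradiction directly against the list of forbidden configurations, exactly as was done for Claims~\ref{claim1} and~\ref{claim2}. Suppose, for some vertex $v$ with $d(v)\geq 4$, that the conclusion fails: then $v$ is adjacent to two \emph{distinct} simple $3$-vertices $u_1$ and $u_2$, and each $u_i$ is adjacent to a further $3^{--}$-vertex. Since $d(u_i)=3$ and $u_i$ is adjacent to $v$, its two remaining neighbours are determined; choose $w_i$ to be one of them that is a $3^{--}$-vertex with $w_i\neq v$ (such a $w_i$ exists, since $d(v)\geq4$ means $v$ itself is not a $3^{--}$-vertex). This produces a concrete local subgraph on the vertices $v,u_1,u_2,w_1,w_2$, with $v$ joined to $u_1,u_2$ and each $u_i$ joined to $w_i$, and it is this subgraph that I would match against a forbidden configuration.

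The main body of the argument is a short case analysis showing that every such local subgraph coincides with $H_3$ or $H_4$ of Figure~2. First I would invoke the labelling conventions (1)--(4): the $3$-vertices $u_1,u_2$ are solid, hence carry no edges beyond those shown, while $v$ and the $w_i$ are hollow and may take any admissible degree, so their precise degrees need not be pinned down. The cases are then organised by how the peripheral vertices may collapse --- whether $w_1=w_2$, whether some $w_i$ coincides with the unnamed third neighbour of the opposite $u_j$, and by the exact value of $d(w_i)\in\{1,2,3\}$ --- and in each case I would check that no multiple edge is created and that the resulting picture is one of the drawn configurations. The hypothesis that $u_1,u_2$ are \emph{simple}, i.e.\ not incident with a special face, is what guarantees that we land in $H_3$/$H_4$ rather than in the special-face obstructions already excluded by Claim~\ref{claim1}; it rules out the local faces around the $u_i$ being of type $(3,3,5^+)$, $(3,4,4)$, $(3,4,5)$ or $(3,4,6)$.

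Since $G$ is assumed to contain none of $H_3,H_4$, every case yields a contradiction, and the claim follows. The step I expect to be the genuine obstacle is the case bookkeeping of the previous paragraph: one must verify that the coincidence rule of convention (1), together with the absence of chordal $4$- and $6$-cycles, really does force each admissible identification of $w_1,w_2$ (and of the unnamed third neighbours of the $u_i$) into a single drawn configuration, with no stray edges and no forbidden chord appearing. In particular I would use the no-chordal-$4$-cycle and no-chordal-$6$-cycle hypotheses to discard those identifications that would otherwise create a local picture not listed in Figure~2, so that the finite collection $H_3,H_4$ is genuinely exhaustive for this pattern and the contradiction is complete.
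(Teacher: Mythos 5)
Your proposal is correct and takes essentially the same route as the paper: the paper derives Claims~\ref{claim2} and~\ref{claim3} in a single sentence from the absence of the configurations $H_3$ and $H_4$, which is exactly the contradiction you set up (you merely spell out the case bookkeeping on how the peripheral $3^{--}$-vertices may coincide, which the paper leaves implicit).
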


By Euler's formula $|V|-|E|+|F|=2$ and $\sum_{v\in
V(G)}d(v)=\sum_{f\in F(G)}d(f)=2|E|$, thus

\begin{eqnarray}\label{formula1}
\sum_{v\in V(G)}(3d(v)-10)+\sum_{f\in
F(G)}(2d(f)-10)=-10(|V|-|E|+|F|)=-20.
\end{eqnarray}

Define an initial charge function $w$ on $V(G)\cup F(G)$ by
setting $w(v)=3d(v)-10$ if $v\in V(G)$ and $w(f)=2d(f)-10$ if
$f\in F(G)$.

In the following, we divide the proof into four cases.

\vspace{0.3cm} \noindent {\bf Case 1.} $\delta(G)=3$.

Since $G$ does not contain the configuration $H_{5}$, $G$ has the
following property.

\begin{fact}\label{fact11} Any $3$-face in $G$ is a $(3, 3, 5^+)$-,
$(3,4^+,4^+)$- or $(4^+,4^+,4^+)$-face, i.e. there is no
$(3,3,4^-)$-face.
\end{fact}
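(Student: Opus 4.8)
The plan is to reduce the statement to a single excluded configuration. Throughout we are in Case~1, where $\delta(G)=3$, so every vertex lying on a $3$-face has degree at least $3$. Classifying a $3$-face $f=xyz$ by the number of incident $3$-vertices, its degree pattern is one of $(4^+,4^+,4^+)$ (no $3$-vertex), $(3,4^+,4^+)$ (exactly one $3$-vertex), $(3,3,d)$ with $d\geq 3$ (two $3$-vertices), or $(3,3,3)$ (three $3$-vertices). The first two types already appear in the conclusion of Fact~\ref{fact11}, and a pattern with two or three $3$-vertices whose remaining vertex has degree at most $4$ is exactly a $(3,3,4^-)$-face. Hence it suffices to show that $G$ contains no $(3,3,4^-)$-face.

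First I would assume, for contradiction, that $G$ has a $(3,3,4^-)$-face $f=xyz$ with $d(x)=d(y)=3$ and $3\leq d(z)\leq 4$. Since $x$ and $y$ are $3$-vertices incident with $f$, each carries its two $f$-edges together with exactly one further edge leaving $f$; thus $x$ and $y$ are solid $3$-vertices in the sense of the conventions for Figure~2, while $z$ is a $4^-$-vertex. The crucial step is then to recognize this local picture as the forbidden configuration $H_5$ of Figure~2: the two solid $3$-vertices $x,y$ and the third vertex $z$ of degree at most $4$, together with the triangle $xyz$, reproduce $H_5$ under the labelling and the solid/hollow conventions (2)--(3).

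Having matched the configuration, I would invoke the standing hypothesis of Lemma~\ref{lemma3}, that $G$ contains none of $H_1,\dots,H_{41}$, and in particular no copy of $H_5$. This contradicts the existence of $f$. Therefore no $3$-face of $G$ is a $(3,3,4^-)$-face, so every $3$-face is a $(3,3,5^+)$-, $(3,4^+,4^+)$- or $(4^+,4^+,4^+)$-face, which is precisely the assertion of Fact~\ref{fact11}.

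I expect the only delicate point to be the identification step, since that is where all the content sits. One must confirm that both sub-possibilities $d(z)=3$ and $d(z)=4$ are captured by the single forbidden configuration $H_5$, and that no coincidence of vertices permitted by convention (1) --- for instance the outside neighbours of $x$ and $y$ merging --- turns $f$ into a different, allowed configuration or forces a multiple edge. Here the absence of chordal $4$- and $6$-cycles, together with Lemma~\ref{lemma1} (no two adjacent $3$-cycles, and, when $\delta(G)\geq 3$, no $3$-cycle adjacent to a $5$-cycle), rules out such degenerate identifications, so that the correspondence with $H_5$ is clean and the contradiction genuine.
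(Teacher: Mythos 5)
Your proof is correct and follows the same route as the paper: the paper's entire justification for Fact~1 is the one-line observation that $G$ contains no copy of the configuration $H_5$, which is exactly the $(3,3,4^-)$-face you identify and exclude. Your additional care about vertex coincidences and the reduction to the single excluded pattern is a more explicit write-up of the same argument, not a different one.
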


 Since $G$ does not contain the configuration
$H_{6}$, $G$ has the following property.

\begin{fact}\label{fact12} Any $4$-face in $G$ is a $(3, 3,
5^+,5^+)$-, $(3,4^+,4^+,4^+)$- or $(4^+,4^+,4^+,4^+)$-face, i.e.
there is no $(3,3,3,3^+)$- or $(3,3,4,4^+)$-face.
\end{fact}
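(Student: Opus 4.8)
The plan is to obtain Fact~\ref{fact12} as a direct consequence of the standing hypothesis that $G$ contains none of the configurations $H_1,\ldots,H_{41}$, and in particular not $H_6$; I argue by contradiction. Suppose some $4$-face $f$ violates the conclusion, so that $f$ is none of a $(3,3,5^+,5^+)$-, a $(3,4^+,4^+,4^+)$-, or a $(4^+,4^+,4^+,4^+)$-face. In Case~1 we have $\delta(G)=3$, so every vertex incident with $f$ has degree at least $3$. Sorting the four incident degrees and comparing with the three allowed types, the only remaining possibilities are that $f$ carries at least three $3$-vertices, i.e.\ $f$ is a $(3,3,3,3^+)$-face, or that $f$ carries exactly two $3$-vertices while at least one of the other two has degree exactly $4$, i.e.\ $f$ is a $(3,3,4,4^+)$-face. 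This elementary enumeration is exactly the reformulation given in the statement.

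Next I would exploit the chord-freeness of $G$ to read off the local picture. Since $G$ has no chordal $4$-cycle, the boundary of $f$ is a chordless $4$-cycle, so its four incident vertices are distinct and $f$ is bounded by exactly the four displayed edges with no hidden chord. In each of the two forbidden cases I would mark the $3$-vertices as solid vertices---by convention~(2) they carry no edges beyond those shown, consistent with degree $3$---and the remaining one or two vertices as hollow vertices whose degrees range over $[d,\Delta(G)]$ in accordance with convention~(3). By convention~(4) the boundary vertices of a $4$-face may be permuted, so the cyclic positions of the $3$-vertices around $f$ are irrelevant; hence every realization of either forbidden type is an occurrence of the single drawn configuration $H_6$. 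This contradicts the assumption that $G$ avoids $H_6$, and the fact follows.

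The step I expect to carry the real weight is the last one: verifying that the two forbidden face types are faithfully captured by the one figure $H_6$ under conventions~(1)--(4). Concretely one must check that $H_6$ is drawn with its solid vertices in exactly the positions forced to have degree $3$, that the hollow-vertex degree ranges match the $3^+$ and $4^+$ labels, and that the permutation convention~(4) really absorbs both the adjacent and the opposite placements of the $3$-vertices on the bounding $4$-cycle. One should also confirm that identifying $f$ with $H_6$ never demands a chord or a coincidence of labelled vertices that $G$ already forbids. Granting this pictorial correspondence, the contrapositive enumeration of admissible versus forbidden $4$-faces is immediate.
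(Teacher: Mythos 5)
Your proposal is correct and takes essentially the same approach as the paper: the paper derives this fact in a single sentence from the standing assumption that $G$ contains no copy of the configuration $H_{6}$, and your argument is the same deduction, with the complementary enumeration of degree sequences (showing the only excluded types are $(3,3,3,3^+)$ and $(3,3,4,4^+)$ under $\delta(G)=3$) made explicit. The one load-bearing step you rightly flag---that both forbidden placements of the $3$-vertices on the bounding $4$-cycle are absorbed by the single drawn configuration $H_6$ under conventions (1)--(4)---is exactly what the paper leaves implicit in its appeal to Figure~2.
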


For convenience, if a face is a $(3,3,5,5^+)$- or
$(3,4,5^-,6^-)$-face, then we call it a $bad$ $face$. The
$3$-vertex which is incident with a bad face is said to be a $bad$
$3$-$vertex$. If a vertex $v$ is adjacent to a bad $3$-vertex $w$
and $v$ is not incident with the bad face $f$ which is incident
with the vertex $w$, then we say that $v$ is $weakly$ $incident$
with the bad face $f$.

Now redistribute the charge according to the following discharging
rules.

\begin{itemize}

 \item \textbf{$R1.$ Transfer $1$ from each $5^+$-vertex to every adjacent simple
$3$-vertex which is adjacent to exactly two $3^{--}$-vertices}.

\item \textbf{$R2.$ Transfer $\frac{1}{2}$ from each $4^+$-vertex
to every adjacent simple $3$-vertex which is adjacent to exactly
one $3^{--}$-vertex}.

\item  \textbf{$R3.$ Transfer $\frac{1}{3}$ from each $4^+$-vertex
to every adjacent simple $3$-vertex which is not adjacent to any
$3^{--}$-vertex}.

\item  \textbf{$R4.$ Transfer $\frac{1}{3}$ from each $6^+$-face
$f$ to every adjacent $3$-face and $4$-face via each common edge}.

\item  \textbf{$R5.$ If $f$ is a $4$-face incident with a vertex
$v$, then $v$ gives $\frac{1}{2}$ to $f$ if $d(v)=4$ and $f$ is a
$(3,4,5^-,5^-)$- or $(4,4,4,4^+)$-face, $\frac{1}{3}$ if $d(v)=4$
and $f$ is either a $(3,4,4^+,6^+)$- or a $(4,4^+,5^+,5^+)$-face;}

\textbf{$\frac{1}{2}$ if $d(v)=5$ and $f$ is a
$(5,5^+,5^+,5^+)$-face; $\frac{2}{3}$ if $d(v)=5$ and $f$ is a
$4$-face of another type};

\textbf{$1$ if $d(v)=6$ and $f$ is a $(3,3,6,6^+)$- or
$(3,4,6,4^+)$-face, $\frac{2}{3}$ if $d(v)=6$ and $f$ is a
$(3,6,5^+,5^+)$- or $(4^+,6,4^+,4^+)$-face};

\textbf{$\frac{4}{3}$ if $d(v)\geq 7$}.

\item  \textbf{$R6.$ If $f$ is a $3$-face incident with a vertex
$v$ with $d(v)=4$, then $v$ gives $\frac{2}{3}$ to $f$} if $f$ is
a $(3,4,4^+)$-face, $\frac{4}{3}$ if $f$ is a $(4,4,4)$-face, $1$
if $f$ is a $(4,4,5^+)$- or $(4,5,5^+)$-face, $0$ if $f$ is a
$(4,6^+,6^+)$-face;

\textbf{If $f$ is a $3$-face incident with a vertex $v$ with
$d(v)=5$, then $v$ gives $\frac{11}{6}$ to $f$ if $f$ is a
$(3,5,3^+)$-face, $2$ if $f$ is a $(4,4,5)$- or $(4,5,5)$-face,
$\frac{4}{3}$ if $f$ is a $(5,5,5)$-face, $1$ if $f$ is a
$(4,5,6^+)$-, $(5,5,6^+)$- or $(5,6^+,6^+)$-face};

\textbf{If $f$ is a $3$-face incident with a vertex $v$ with
$d(v)=6$, then $v$ gives $2$ to $f$};

\textbf{If $f$ is a $3$-face incident with a vertex $v$ with
$d(v)\geq7$, then $v$ gives $3$ to $f$ if $f$ is a $(3,3,7^+)$- or
$(3,4,7^+)$-face, $2$ if $f$ is a $(3,5^+,7^+)$- or
$(4^+,4^+,7^+)$-face}.

\item  \textbf{$R7.$ If $f$ is a bad face and $v$ is weakly
incident with $f$, then $v$ gives charge $\frac{1}{2}$ to $f$}.

\end{itemize}

\vspace{0.2cm}

In the following, let us check the new charge of each element $x$
for $x\in V(G)\cup F(G)$.

For convenience, we use $f_{k}^{\alpha}(v)$ (respectively,
$n_3^{\alpha}(v)$) to denote the number of $k$-faces
(respectively, $3$-vertices) which are incident with $v$ and
receive charge at least $\alpha$ from $v$ according to the
discharging rules.

By Claim~\ref{claim2}, Claim~\ref{claim3}, $R1$, $R2$ and $R3$, we
have the following fact.

\begin{fact}\label{fact13}
For each $v\in V(G)$, obviously, $n_3^{\frac{1}{2}}(v)\leq1$, and
if $n_3^{1}(v)\neq0$, then $n_3(v)=1$ and the degrees of other
neighbors of $v$ are at least 5.
\end{fact}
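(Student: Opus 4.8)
The plan is to read off Fact~\ref{fact13} directly from the discharging rules $R1$--$R3$ together with Claims~\ref{claim2} and~\ref{claim3}, since no genuine computation is involved. First I would note that the only rules transferring charge from a vertex $v$ to an adjacent $3$-vertex are $R1$, $R2$ and $R3$, and that each of these applies only to a \emph{simple} $3$-vertex: by $R1$ a simple $3$-vertex adjacent to exactly two $3^{--}$-vertices receives $1$ from a $5^+$-neighbour; by $R2$ a simple $3$-vertex adjacent to exactly one $3^{--}$-vertex receives $\frac{1}{2}$ from a $4^+$-neighbour; and by $R3$ a simple $3$-vertex adjacent to no $3^{--}$-vertex receives $\frac{1}{3}$ from a $4^+$-neighbour. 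Consequently every $3$-vertex counted by $n_3^{\alpha}(v)$ with $\alpha\geq\frac{1}{3}$ is simple, and whenever $n_3^{\frac{1}{2}}(v)\neq0$ or $n_3^{1}(v)\neq0$ the vertex $v$ must actually be sending charge, so $d(v)\geq4$ and Claims~\ref{claim2} and~\ref{claim3} both apply to $v$.

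For the first assertion I would observe that $n_3^{\frac{1}{2}}(v)$ counts exactly the simple $3$-vertices adjacent to $v$ that receive $\frac{1}{2}$ (via $R2$) or $1$ (via $R1$); in either case such a neighbour is adjacent to at least one $3^{--}$-vertex. Since $v$ itself is a $4^+$-vertex, and hence not $3^{--}$, that $3^{--}$-vertex is distinct from $v$. Claim~\ref{claim3} then bounds the number of simple $3$-vertices adjacent to $v$ having a further $3^{--}$-neighbour by one, which gives $n_3^{\frac{1}{2}}(v)\leq1$.

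For the second assertion, $n_3^{1}(v)\neq0$ forces $v$ to have a simple $3$-vertex neighbour $w$ receiving charge $1$; by $R1$ this happens only when $w$ is adjacent to two $3^{--}$-vertices and $v$ is a $5^+$-vertex. Claim~\ref{claim2} then tells us that $v$ is adjacent to no $4^{--}$-vertex other than $w$. Since every $3$-vertex is in particular a $4^{--}$-vertex, $w$ is the only $3$-vertex adjacent to $v$; a fortiori it is the only simple $3$-vertex adjacent to $v$, so $n_3(v)=1$, and every remaining neighbour of $v$ has degree at least $5$.

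The whole argument is essentially bookkeeping, and the only point requiring care is to verify that the degree hypothesis $d(v)\geq4$ of Claims~\ref{claim2} and~\ref{claim3} is met whenever the quantities $n_3^{\frac{1}{2}}(v)$ or $n_3^{1}(v)$ are nonzero. This is exactly the content of the first paragraph: only $4^+$-vertices ($5^+$-vertices in the case of $R1$) ever transfer charge to an adjacent $3$-vertex, so the relevant claims are always available when we need them.
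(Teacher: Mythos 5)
Your proposal is correct and follows exactly the route the paper intends: the paper justifies Fact~\ref{fact13} simply by citing Claims~\ref{claim2}, \ref{claim3} and rules $R1$--$R3$, and your write-up is a careful unpacking of that same argument (identifying which rules can send charge $\geq\frac{1}{2}$ or $\geq 1$ to a $3$-vertex, checking that the receiving vertex is a simple $3$-vertex with a $3^{--}$-neighbour distinct from $v$, and then invoking the two claims). Nothing further is needed.
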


Since $G$ contains no configurations $H_{7}$ and $H_{8}$, thus the
following fact holds.

\begin{fact}\label{fact14} For
each $v\in V(G)$, $v$ is weakly incident with at most one bad
face. Furthermore, if $v$ is weakly incident with a bad face, then
$n_3(v)=1$.
\end{fact}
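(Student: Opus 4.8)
The plan is to establish both assertions by contradiction, in each case extracting one of the excluded configurations of Figure~2 from a hypothetical violation. Recall the relevant definitions: $v$ is weakly incident with a bad face $f$ exactly when $v$ has a neighbor $w$ that is a $3$-vertex lying on $f$ while $v$ itself is not incident with $f$, where $f$ is a $(3,3,5,5^+)$- or a $(3,4,5^-,6^-)$-face. So the whole content of the fact is that a violation would force a forbidden local structure, namely $H_7$ or $H_8$.

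For the first assertion, I would assume $v$ is weakly incident with two distinct bad faces $f_1$ and $f_2$, witnessed by bad $3$-vertices $w_1$ and $w_2$. Then $v$ is adjacent to two $3$-vertices, each carrying its own bad $4$-face, and $v$ lies on neither face. Gluing the two bad $4$-faces to $v$ through the edges $vw_1$ and $vw_2$ produces the local structure $H_7$ (allowing, as the conventions of Figure~2 permit, hollow vertices of equal degree to coincide and the boundary vertices of each $4$-face to be permuted). Since $G$ avoids $H_7$, no such $v$ exists, giving the bound of at most one bad face.

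For the second assertion, I would assume $v$ is weakly incident with a bad face $f$ through $w$ but $n_3(v)\ge 2$, so that $v$ has a simple $3$-neighbor $u\ne w$ in addition to $w$. The neighborhood of $v$ then displays the bad $4$-face $f$ through $w$ together with a second simple $3$-vertex $u$ hanging off $v$, which is precisely $H_8$. As $G$ contains no $H_8$, we get $n_3(v)\le 1$; combined with the observation that the bad $3$-vertex $w$ realizing the weak incidence is itself the simple $3$-neighbor being counted, this forces $n_3(v)=1$. (The pairing of $H_7$ with the first part and $H_8$ with the second is read off directly from Figure~2.)

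I expect the main difficulty to be the case analysis verifying that every realization of the hypotheses genuinely collapses onto $H_7$ or $H_8$. A bad face comes in the two families $(3,3,5,5^+)$ and $(3,4,5^-,6^-)$, and the Figure~2 conventions permit both the reordering of $4$-face boundary vertices and the identification of like-degree hollow vertices; one must check that no admissible choice secretly creates a chordal $4$- or $6$-cycle or a multiple edge (both excluded for $G$), and that every remaining choice is indeed one of the two named configurations. This is bookkeeping driven entirely by the definitions rather than any new idea, which is exactly why the fact can be asserted directly from the exclusion of $H_7$ and $H_8$.
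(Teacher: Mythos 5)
Your proposal matches the paper exactly: the paper's entire justification for this fact is the single sentence that it holds ``since $G$ contains no configurations $H_7$ and $H_8$,'' which is precisely the two-part reduction you carry out (two bad faces weakly incident with $v$ would force $H_7$; one bad face plus a second simple $3$-neighbor would force $H_8$). Your write-up is in fact more detailed than the paper's, and the only caveat neither of us can fully discharge without the figure is the final step from $n_3(v)\le 1$ to $n_3(v)=1$, which requires the witnessing bad $3$-vertex to be a \emph{simple} $3$-vertex; fortunately only the upper bound is used in the subsequent discharging.
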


Let $v\in V(G)$. \textbf{Suppose $d(v)=3$}. Then $w(v)=-1$. Since
$G$ contains no configuration $H_9$, $v$ is not weakly incident
with any bad face. Since $G$ contains no configuration $H_{10}$,
$v$ is adjacent to at least one $5^+$-vertex or is adjacent to at
least two $4^+$-vertices. If $v$ is a simple $3$-vertex, then
$w'(v)=-1+1=0$ by $R1$, $w'(v)=-1+\frac{1}{2}\times2=0$ by $R2$ or
$w'(v)=-1+\frac{1}{3}\times3=0$ by $R3$. Otherwise, i.e. if $v$ is
a special $3$-vertex, then $w'(v)=w(v)=-1$.\\

\textbf{Suppose $d(v)=4$}. Then $w(v)=2$.

First, we assume that $v$ is weakly incident with a bad face.
Since $G$ contains no configuration $H_{11}$, we have
$f_3(v)\leq1$. Additionally, if $f_3^{\alpha}=1$, we have
$\alpha=0$ because $G$ contains no configuration $H_{12}$ and by
$R6$. By Lemma~\ref{lemma1}, we have $f_4(v)\leq1$. Clearly,
$w'(v)\geq 2-\frac{1}{2}-\frac{1}{2}-\frac{1}{2}=\frac{1}{2}>0$ by
Fact~\ref{fact14}, $R2$, $R5$ and $R7$.

Now we assume that $v$ is not weakly incident with a bad face.
Clearly, we have $f_3(v)\leq2$. For convenience, we divide the
proof into the following cases.\\

\textbf{Case $1.1$} $f_3(v)=2$. Then $n_3(v)\leq 1$, $f_4(v)=0$
for the reason that $G$ contains no configuration $H_{13}$, by
Fact 1 and Lemma~\ref{lemma1}. If $f_3(v)=2$, $n_3(v)=1$, then we
have that $f_3^{\frac{4}{3}}(v)=0$ and $n_3^{\frac{1}{2}}(v)=0$
for the reason that $G$ contains no configurations $H_{15}$,
$H_{14}$ and by $R6$, $R2$, $R1$. Clearly, $w'(v)\geq
2-\frac{2}{3}-1-\frac{1}{3}=0$ by $R6$ and $R3$. If $f_3(v)=2$,
$n_3(v)=0$ and $f_3^{\frac{4}{3}}(v)\neq0$, then we have that
$w'(v)\geq 2-\frac{4}{3}=\frac{2}{3}>0$ for the reason that $G$
contains no configuration $H_{15}$ and by $R6$. If $f_3(v)=2$,
$n_3(v)=0$ and $f_3^{\frac{4}{3}}(v)=0$, then we
have that $w'(v)\geq 2-1\times2=0$ by $R6$.\\

\textbf{Case $1.2$} $f_3(v)=1$. Then $f_4(v)\leq2$ by
Lemma~\ref{lemma1}.

\textbf{Case $1.2.1$} $f_4(v)=2$.

If $f_4(v)=2$ and the $3$-face incident with $v$ is a
$(3,4,4^+)$-face, then $n_3(v)=1$ and $n_3^{\frac{1}{2}}(v)=0$ for
the reason that $G$ contains no configurations $H_{13}$, $H_{14}$
and by $R2$, $R1$. Thus $w'(v)\geq
2-\frac{2}{3}-\frac{1}{2}\times2-\frac{1}{3}=0$ by $R6$, $R5$,
$R3$.

If $f_4(v)=2$ and the $3$-face incident with $v$ is a
$(4,4,4)$-face, then $n_3(v)=0$, $f_4^{\frac{1}{2}}(v)=0$ for the
reason that $G$ contains no configuration $H_{16}$ and $R5$. Thus
$w'(v)\geq 2-\frac{4}{3}-\frac{1}{3}\times2=0$ by $R6$, $R5$.

If $f_4(v)=2$ and the $3$-face is a $(4,4,5^+)$- or
$(4,5,5^+)$-face, then $n_3(v)\leq1$ for the reason that $G$
contains no configuration $H_{17}$. First, we assume $n_3(v)=1$.
Since $G$ contains no configurations $H_{18}$, $H_{19}$ and by
$R5$, we have that $f_4^{\frac{1}{2}}(v)=0$. So $w'(v)\geq
2-1-\frac{1}{3}\times2-\frac{1}{3}=0$ by $R6$, $R5$ and $R3$. Now,
we assume that $n_3(v)=0$. Thus $w'(v)> 2-1-\frac{1}{2}\times2=0$
by $R6$ and $R5$.

If $f_4(v)=2$ and the $3$-face is a $(4,6^+,6^+)$-face, then
$f_3^{\frac{2}{3}}(v)=0$ by $R6$. Furthermore, as $n_3(v)\leq2$,
we have that $w'(v)>
2-\frac{1}{2}\times2-\frac{1}{3}-\frac{1}{2}=\frac{1}{6}>0$ by
$R5$, $R3$ and $R2$. By Fact~\ref{fact11}, this concludes the case
where $f_4(v)=2$.\\

\textbf{Case $1.2.2$} $f_4(v)=1$.

If $f_4(v)=1$ and the $3$-face is a $(4,4,4)$-face, then
$n_3(v)=0$ for the reason that $G$ contains no configurations
$H_{16}$, $H_{20}$ and $H_{21}$. Thus $w'(v)\geq
2-\frac{4}{3}-\frac{1}{2}=\frac{1}{6}>0$ by $R6$ and $R5$.

If $f_4(v)=1$ and the $3$-face is a $(4,4,5^+)$- or
$(4,5,5^+)$-face, then $n_3(v)=1$ and $n_3^{\frac{1}{2}}(v)=0$ for
the reason that $G$ contains no configurations $H_{17}$, $H_{22}$
and by $R2$, $R1$. Thus
$w'(v)\geq2-1-\frac{1}{2}-\frac{1}{3}=\frac{1}{6}>0$ by $R6$,
$R5$, $R3$.

If $f_4(v)=1$ and the $3$-face is a $(3,4,4^+)$-face, then
$n_3(v)=1$ and $n_3^{\frac{1}{2}}(v)=0$ for the reason that $G$
contains no configurations $H_{13}$ and $H_{14}$ and by $R2$,
$R1$. Thus
$w'(v)\geq2-\frac{2}{3}-\frac{1}{2}-\frac{1}{3}=\frac{1}{2}>0$ by
$R6$, $R5$ and $R3$.

If $f_4(v)=1$ and the $3$-face is a $(4,6^+,6^+)$-face, then
$f_3^{\frac{2}{3}}(v)=0$, $n_3(v)\leq2$ by Fact~\ref{fact12} and
$R6$. Thus $w'(v)\geq
2-\frac{1}{2}-\frac{1}{2}\times2=\frac{1}{2}>0$ by $R5$ and $R2$.
By fact~\ref{fact11}, this completes this subcase.\\

\textbf{Case $1.2.3$} $f_4(v)=0$.

If the $3$-face is a $(4,4,4)$-face, then $n_3(v)\leq1$ and
$n_3^{\frac{1}{2}}(v)=0$ for the reason that $G$ contains no
configurations $H_{17}$, $H_{22}$ and by $R2$, $R1$. Thus
$w'(v)\geq 2-\frac{4}{3}-\frac{1}{3}=\frac{1}{3}>0$ by $R6$ and
$R3$.

If the $3$-face is a $(3,4,4^+)$- or $(4,4^+,5^+)$-face, then
$n_3(v)\leq2$ for the reason that $G$ contains no configuration
$H_{13}$. Thus $w'(v)\geq 2-1-1=0$ by Fact~\ref{fact13}, $R5$ and
$R1$. By
Fact~\ref{fact11}, this conclude the subcase $f_4(v)=0$.\\

\textbf{Case $1.3$} $f_3(v)=0$. Then $f_4(v)\leq2$ by
Lemma~\ref{lemma1}.

If $f_4(v)=2$, then $n_3(v)\leq2$ by Fact~\ref{fact12}. Thus
$w'(v)\geq2-\frac{1}{2}\times2-1=0$ by Fact~\ref{fact13}, $R6$ and
$R1$. If $f_4(v)=1$, then $n_3(v)\leq3$ by Fact~\ref{fact12}. Thus
$w'(v)\geq2-\frac{1}{2}-\frac{1}{2}-\frac{1}{3}\times2=\frac{1}{3}>0$
by $R5$, $R2$ and $R3$. Otherwise, $f_4(v)=0$, then $n_3(v)\leq4$.
Thus $w'(v)\geq 2-\frac{1}{2}-\frac{1}{3}\times3=\frac{1}{2}>0$
by Fact~\ref{fact13} and $R2$ and $R3$.\\

\textbf{Suppose $d(v)=5$}. Then $w(v)=5$.

\textbf{Case $1.4$} $v$ is weakly incident with a bad face.
Clearly, $f_3(v)\leq2$. Furthermore, if $f_3(v)=2$, then
$f_4(v)\leq1$ by Lemma~\ref{lemma1}.

If $f_3(v)=2$ and $f_4(v)=1$, then one of the two $3$-faces must
be adjacent to a bad face which is weakly incident with $v$ by
Lemma~\ref{lemma1}. Obviously, it is a $(3,5,3^+)$-face. In
detail, it is a special face (i.e. a $(3,5,3)$-face) or a
$(3,5,4^+)$-face. Since $G$ contains no configuration $H_{23}$,
the other $3$-face is neither a $(4,4,5)$- nor a $(4,5,5)$-face.
Thus
$w'(v)\geq5-\frac{11}{6}-\frac{4}{3}-\frac{2}{3}-\frac{1}{2}=\frac{2}{3}>0$
or
$w'(v)\geq5-\frac{11}{6}-\frac{4}{3}-\frac{2}{3}-\frac{1}{2}-\frac{1}{2}=\frac{1}{6}>0$
by Fact~\ref{fact14}, $R6$, $R5$, $R7$ and $R2$.

If $f_3(v)=2$ and $f_4(v)=0$, we have that
$w'(v)\geq5-2\times2-\frac{1}{2}-\frac{1}{2}=0$ by $R6$, $R2$ and
$R7$.

If $f_3(v)\leq1$, then $f_4(v)\leq2$. We have that
$w'(v)\geq5-2-\frac{2}{3}\times2-\frac{1}{2}-\frac{1}{2}=\frac{2}{3}>0$
by $R6$, $R5$, $R7$ and $R2$.\\

\textbf{Case $1.5$} $v$ is not weakly incident with a bad face.
Clearly, $f_3(v)\leq2$.

\textbf{Case $1.5.1$} $f_3(v)=2$. Then $f_4(v)\leq1$.

If both of the $3$-faces are $(4,4,5)$- or $(4,5,5)$-faces, then
$n_3(v)=0$ for the reason that $G$ contains no configuration
$H_{24}$. Thus $w'(v)\geq5-2\times2-\frac{2}{3}=\frac{1}{3}>0$ by
$R6$ and $R5$.

If only one of the $3$-faces is a $(4,4,5)$- or $(4,5,5)$-face,
then the other $3$-face is not a $(3,5,3^+)$-face for the reason
that $G$ contains no configuration $H_{23}$. Thus $n_3(v)\leq1$
and $n_3^{1}=0$ by the Fact~\ref{fact13}. We have that
$w'(v)\geq5-2-\frac{4}{3}-\frac{2}{3}-\frac{1}{2}=\frac{1}{2}>0$
by $R6$, $R5$ and $R2$.

If both of the $3$-faces are $(3,5,3^+)$-faces, then $n_3(v)=2$,
$n_3^{\frac{1}{2}}(v)=0$ for the reason that $G$ contains no
configurations $H_{25}$, $H_{26}$ and by $R2$, $R1$. Thus
$w'(v)\geq5-\frac{11}{6}\times2-\frac{2}{3}-\frac{1}{3}\times2=0$
by $R6$, $R5$ and $R3$.

If only one of the $3$-faces is a $(3,5,3^+)$-face, then
$n_3(v)\leq2$, $n_3^{\frac{1}{2}}(v)\leq1$ for the reason that $G$
contains no configurations $H_{25}$ and $H_{26}$ and by $R2$,
$R1$. Thus
$w'(v)\geq5-\frac{11}{6}-\frac{4}{3}-\frac{2}{3}-\frac{1}{3}-\frac{1}{2}=\frac{1}{3}>0$
by $R6$, $R5$, $R3$ and $R2$.

If any of the $3$-faces does not belong to $(3,5,3^+)$-,
$(4,4,5)$- and $(4,5,5)$-faces, then $n_3(v)\leq1$. Thus
$w'(v)\geq5-\frac{4}{3}\times2-\frac{2}{3}-1=\frac{2}{3}>0$ by
$R6$, $R5$ and $R1$.\\

\textbf{Case $1.5.2$} $f_3(v)=1$. Then $f_4(v)\leq2$,
$n_3(v)\leq4$ ($v$ could be adjacent to five $3$-vertices, but at
most four of them are simple) by Lemma~\ref{lemma1}. Clearly,
$w'(v)\geq5-2-\frac{2}{3}\times2-\frac{1}{2}-\frac{1}{3}\times3=\frac{1}{6}>0$
by Fact~\ref{fact13}, $R6$, $R5$, $R2$ and $R3$.\\

\textbf{Case $1.5.3$} $f_3(v)=0$. Then $f_4(v)\leq2$,
$n_3(v)\leq5$ by Lemma~\ref{lemma1}. Clearly,
$w'(v)\geq5-\frac{2}{3}\times2-\frac{1}{2}-\frac{1}{3}\times4=\frac{11}{6}>0$
by Fact~\ref{fact13}, $R5$, $R2$ and $R3$.\\

\textbf{Suppose $d(v)=6$}. Then $w(v)=8$.

First, we assume that $v$ is weakly incident with a bad face.
Clearly, $f_3(v)\leq3$. If $f_3(v)=3$, then $f_4(v)=0$ by
Lemma~\ref{lemma1}. Clearly,
$w'(v)\geq8-2\times3-\frac{1}{2}-\frac{1}{2}=1>0$ by
Fact~\ref{fact14}, $R6$, $R7$ and $R2$. If $f_3(v)\leq2$, then
$f_4(v)\leq2$. Clearly,
$w'(v)\geq8-2\times2-1\times2-\frac{1}{2}-\frac{1}{2}=1>0$ by
Fact~\ref{fact14}, $R6$, $R5$, $R7$ and $R2$.\\

Now we assume that $v$ is not weakly incident with a bad face.
Clearly, $f_3(v)\leq3$. If $f_3(v)=3$, then $f_4(v)=0$,
$n_3(v)\leq3$ (a $3$-face is incident with at most one simple
$3$-vertex) by Lemma~\ref{lemma1}. Thus $w'(v)\geq
8-2\times3-\frac{1}{2}-\frac{1}{3}\times2=\frac{5}{6}>0$ by
Fact~\ref{fact13}, $R6$, $R2$ and $R3$. If $f_3(v)=2$. Then
$f_4(v)\leq2$, $n_3(v)\leq4$ by Lemma~\ref{lemma1}. Thus
$w'(v)\geq
8-2\times2-1\times2-\frac{1}{3}\times3-\frac{1}{2}=\frac{1}{2}>0$
by $R6$, $R5$, $R3$ and $R2$. If $f_3(v)\leq1$, then
$f_4(v)\leq3$, $n_3(v)\leq6$ by Lemma~\ref{lemma1}. Clearly,
$w'(v)> 8-2-1\times3-\frac{1}{3}\times5-\frac{1}{2}=\frac{5}{6}>0$
by $R6$, $R5$, $R3$ and $R2$.\\

\textbf{Suppose $d(v)=7$}. Then $w(v)=11$.

First, we assume that $v$ is weakly incident with a bad face.
Clearly, $f_3(v)\leq3$ by Lemma~\ref{lemma1}. Furthermore,
$f_3^{3}(v)\leq1$ for the reason that $G$ contains no
configuration $H_{27}$ and by $R6$. If $f_3(v)=3$, then
$f_4(v)\leq1$ by Lemma~\ref{lemma1}. Clearly,
$w'(v)\geq11-3-2\times2-\frac{4}{3}-\frac{1}{2}-\frac{1}{2}=\frac{5}{3}>0$
by $R6$, $R5$, $R7$ and $R2$. If $f_3(v)\leq2$, then
$f_4(v)\leq3$. Clearly,
$w'(v)\geq11-3-2-\frac{4}{3}\times3-\frac{1}{2}-\frac{1}{2}=1>0$
by $R6$, $R5$, $R7$ and $R2$.\\

Now we assume that $v$ is not weakly incident with a bad face.
Clearly, we have $f_3(v)\leq3$. Since $G$ contains no
configuration $H_{27}$, there exists at most one $(3,4,7)$-face
which is incident with $v$. If $f_3(v)=3$, then $f_4(v)\leq1$,
$n_3(v)\leq4$ by Lemma~\ref{lemma1}. Thus
$w'(v)\geq11-3-2\times2-\frac{4}{3}-\frac{1}{3}\times
3-\frac{1}{2}=\frac{7}{6}>0$ by Fact~\ref{fact13}, $R6$, $R5$,
$R3$ and $R2$. If $f_3(v)=2$, then $f_4(v)\leq3$, $n_3(v)\leq5$ by
Lemma~\ref{lemma1}. Thus
$w'(v)\geq11-3-2-\frac{4}{3}\times3-\frac{1}{3}\times4-\frac{1}{2}=\frac{1}{6}>0$
by Fact~\ref{fact13}, $R6$, $R5$, $R3$ and $R2$. If $f_3(v)\leq1$,
then $f_4(v)\leq3$, $n_3(v)\leq7$ by Lemma~\ref{lemma1}. Thus
$w'(v)\geq11-3-\frac{4}{3}\times3-\frac{1}{3}\times6-\frac{1}{2}=\frac{3}{2}>0$
by Fact~\ref{fact13}, $R6$, $R5$, $R3$ and $R2$.\\

\textbf{Suppose $d(v)\geq8$}. Then $w(v)=3d(v)-10$.

In any case, whether $v$ is weakly incident with a bad face or
not, we have
\begin{eqnarray}\label{formula20}
f_3(v)+f_4(v)\leq \frac{3}{4}d(v)
\end{eqnarray} by Lemma~\ref{lemma1}. Moreover,
\begin{eqnarray}\label{formula2}
f_3^{3}(v)\leq1
\end{eqnarray}
for the reason that $G$ contains no configuration $H_{27}$ and by
$R6$. Since a $3$-face has at most one simple $3$-vertex,
\begin{eqnarray}\label{formula3}
n_3(v)\leq f_3(v)+d(v)-2f_3(v)=d(v)-f_3(v).
\end{eqnarray}
It follows from (\ref{formula20}) and (\ref{formula3}) that
$f_4(v)\leq \frac{3}{4}d(v)-f_3(v)$ and $n_3(v)\leq d(v)-f_3(v)$,
respectively. Thus $w'(v)\geq
3d(v)-10-3-2(f_3(v)-1)-\frac{4}{3}f_4(v)-\frac{1}{2}-\frac{1}{3}(n_3(v)-1)-\frac{1}{2}\geq
3d(v)-10-3-2f_3(v)+2-d(v)+\frac{4}{3}f_3(v)-\frac{1}{2}-\frac{1}{3}d(v)+\frac{1}{3}f_3(v)+\frac{1}{3}-\frac{1}{2}=
\frac{5}{3}d(v)-\frac{1}{3}f_3(v)-\frac{70}{6}$ by $R6$, $R5$,
$R2$, $R3$ and $R7$. Since
\begin{displaymath}
f_3(v)\leq\frac{1}{2}d(v),
\end{displaymath}
we obtain $w'(v)\geq \frac{3}{2}d(v)-\frac{70}{6}\geq
\frac{1}{3}>0$.\\

Now we consider $f\in F(G)$. \textbf{Suppose $d(f)=3$}. Then
$w(f)=-4$. By Fact~\ref{fact11}, we only discuss the following
situations. If $f$ is a special face $(3,3,5^+)$-face, then we
have that $w'(f)\geq -4+\frac{11}{6}+\frac{1}{3}=-\frac{11}{6}>-2$
by Lemma~\ref{lemma01}, $R6$ and $R4$. If $f$ is a $(3,4,4)$-,
$(3,4,5)$- or $(3,4,6)$-face, we have that $w'(f)\geq -4+
\frac{2}{3}\times2+\frac{1}{3}=-\frac{7}{3}$ by
Lemma~\ref{lemma01}, $R6$ and $R4$. If $f$ is a $(3,4,7^+)$-face,
then $w'(f)\geq -4+\frac{2}{3}+3+\frac{1}{3}=0$  by
Lemma~\ref{lemma01}, $R6$ and $R4$. If $f$ is a
$(3,5^+,5^+)$-face, then $w'(f)\geq
-4+\frac{11}{6}\times2+\frac{1}{3}=0$  by Lemma~\ref{lemma01},
$R6$ and $R4$. If $f$ is a $(4,4,4)$-face, then $w'(f)\geq
-4+\frac{4}{3}\times3=0$ by $R6$. If $f$ is a $(4,4,5^+)$-face,
then $w'(f)\geq -4+1\times2+2=0$ by $R6$. If $f$ is a
$(4,5,5^+)$-face, we have $w'(f)\geq -4+1+2\times2=1>0$ by $R6$.
If $f$ is a $(4,6^+,6^+)$-face, then $w'(f)\geq -4+2\times2=0$ by
$R6$. If $f$ is a $(5,5,5)$-face, we have that $w'(f)\geq
-4+\frac{4}{3}\times3=0$ by $R6$. If $f$ is a
$(5^+,5^+,6^+)$-face, we have that $w'(f)\geq -4+1\times2+2=0$ by
$R6$.\\

\textbf{Suppose $d(f)=4$}. Then $w(f)=-2$. If $f$ is a
$(3,3,5,5^+)$-face, then it is a bad face. Thus $w'(f)\geq
-2+\frac{1}{2}\times2+\frac{2}{3}\times2=\frac{1}{3}>0$ by $R5$
and $R7$. If $f$ is a $(3,3,6^+,6^+)$-face, then
$w'(v)\geq-2+1\times2=0$ by $R5$. If $f$ is a $(3,4,4,4)$- or
$(3,4,4,5)$-face, then it is a bad face. Thus $w'(f)\geq
-2+\frac{1}{2}+\frac{1}{2}\times2+\frac{1}{2}=0$ by $R5$ and $R7$.
If $f$ is a $(3,4,4,6)$-face, then it is a bad face. Thus
$w'(f)\geq -2+\frac{1}{2}+\frac{1}{3}\times2+1=\frac{1}{6}>0$ by
$R5$ and $R7$.  If $f$ is a $(3,4,4,7^+)$-face, then we have that
$w'(v)\geq-2+\frac{1}{3}\times2+\frac{4}{3}=0$ by $R5$. If $f$ is
a $(3,4,5,5)$-face, then it is a bad face. Thus $w'(f)\geq
-2+\frac{1}{2}+\frac{1}{2}+\frac{2}{3}\times2=\frac{1}{3}>0$ by
$R5$ and $R7$. If $f$ is a $(3,4,5,6)$-face, then it is a bad
face. Thus $w'(f)\geq
-2+\frac{1}{2}+\frac{1}{3}+\frac{2}{3}+1=\frac{1}{2}>0$ by $R5$
and $R7$. If $f$ is a $(3,4,5,7^+)$-face, then $w'(f)\geq
-2+\frac{1}{3}+\frac{2}{3}+\frac{4}{3}=\frac{1}{3}>0$ by $R5$. If
$f$ is a $(3,4,6^+,6^+)$-face, then $w'(f)\geq
-2+\frac{1}{3}+1\times2=\frac{1}{3}>0$ by $R5$. If $f$ is a
$(3,5^+,5^+,5^+)$-face, then $w'(f)\geq -2+\frac{2}{3}\times3=0$
by $R5$. If $f$ is a $(4,4,4,4^+)$-face, then $w'(f)\geq
-2+\frac{1}{2}\times4=0$ by $R5$. If $f$ is a
$(4^+,4^+,5^+,5^+)$-face, then $w'(f)\geq
-2+\frac{1}{3}\times2+\frac{2}{3}\times2=0$ by $R5$.\\

\textbf{Suppose $d(f)=5$}. Then $w'(f)=w(f)=0$.\\

\textbf{Suppose $d(f)\geq 6$}. Then $w'(f)\geq
w(f)-\frac{1}{3}\times
d(f)=2d(f)-10-\frac{1}{3}\times d(f)\geq0$ by $R4$.\\

From the above discussion, if $x$ is neither a special vertex nor
a special face, then $w'(x)\geq 0$ for each $x\in V(G)\cup F(G)$.
Let $w'_s$ denote the total new charge of the special $3$-vertices
and the special $3$-faces. Since the new charge of the special
$3$-vertices is $-1$ (see the case "$d(v)=3$") and since the new
charge of the special face is at least $-2$ if it is a
$(3,3,5^+)$-face and at least $-\frac{7}{3}$ if it is a
$(3,4,4)$-, a $(3,4,5)$-, or a $(3,4,6)$-face (see the case
"$d(f)=3$"), Claim~\ref{claim1} implies that $w'_s\geq
\min\{-2-1-1, -\frac{7}{3}-1\}=-4$. So we obtain that
\begin{eqnarray}\label{formula4} \sum_{x\in V(G)\cup
F(G)}w'(x)\geq -4,
\end{eqnarray}a contradiction to Equation~\ref{formula1}.

\vspace{0.3cm} \noindent{\bf Case 2.} $\delta(G)=2$ and there are
at most two $2$-vertices in $G$.

Since $G$ contains no structure isomorphic to the configuration
$H_{5}$, the $3$-faces which are incident with $2$-vertices may be
$(2,3,5)$- or $(2,4^+,4^+)$-faces. Since $G$ contains no structure
isomorphic to the configuration $H_{6}$, the $4$-faces which are
incident with $2$-vertices may be $(2,3^-,5^+,5^+)$- or
$(2,4^+,4^+,4^+)$-faces.

The discharging rules are the same as the rules in Case 1 except
for the charge which is given to a $3$- or $4$-face which is
incident with $2$-vertices. For each $v\in V(G)$, if $d(v)\geq4$,
then $v$ gives charge $\frac{2}{3}$ to its incident $(2,x,y)$-face
$f$; and $v$ gives charge $\frac{1}{3}$ to its incident
$(2,x,y,z)$-face $f$ only if the face $f$ is not adjacent to other
$4$-faces which are incident with $v$, otherwise, $v$ gives charge
$\frac{1}{3}$ to only one of the adjacent $4$-faces. Clearly, the
charge which is given to a $(2,x,y)$- (resp. $(2,x,y,z)$)-face is
not greater than that which is given to $(3,x,y)$- (resp.
$(3,x,y,z)$)-faces. For each $v\in V(G)$, the number of $(2,x,y)$-
(resp. $(2,x,y,z)$)-faces which is incident with and accept charge
from $v$ is not greater than that of $(3,x,y)$- (resp.
$(3,x,y,z)$)-faces which is incident with $v$. So we can guarantee
the new charge of each element $x\in V(G)\cup F(G)$ is larger than
or equal to zero except for the special $3$-vertices, the special
$3$-faces, the $2$-vertices and the $3$- or $4$-faces which are
incident with the $2$-vertices. For convenience, let $w'_{t1}$
(resp. $w'_{t2}$) denote the total new charge of one $2$-vertex
(resp. two $2$-vertices) and the faces which are incident with the
$2$-vertex (resp. the two $2$-vertices).\\

\textbf{Suppose that there exists only one $2$-vertex in $G$}. If
the $2$-vertex is incident with one $3$-face, then it will be not
incident with any $4$-face by Lemma~\ref{lemma1}. Since $G$
contains no configuration $H_5$, the $3$-face is a $(2,3^+,5^+)$-
or $(2,4^+,4^+)$-face, thus $w'_{t1}\geq
-4-4+\frac{2}{3}=-\frac{22}{3}$ or $w'_{t1}\geq
-4-4+\frac{2}{3}\times2=-\frac{20}{3}$. If the $2$-vertex is
incident with a $4$-face, then it may be incident with two
$4$-faces. Furthermore, the $4$-face is a $(2,3^+,5^+,5^+)$- or a
$(2,4^+,4^+,4^+)$-face for the reason that $G$ contains no
configuration $H_6$. Clearly, $w'_{t1}\geq
-2-2-4+\frac{1}{3}\times2=-\frac{22}{3}$ or $w'_{t1}\geq
-2-2-4+\frac{1}{3}\times3=-7$. From the above discussion, we
obtain that
\begin{eqnarray}\label{formula5}
w'_{t1}\geq \min\{-7, -\frac{20}{3},
-\frac{22}{3}\}=-\frac{22}{3}.
\end{eqnarray} By~(\ref{formula4}), we have that $\sum_{x\in V(G)\cup F(G)}w'(x)\geq
-4+w'_{t1}\geq-4-\frac{22}{3}=-\frac{34}{3}$, a contradiction to Equation~\ref{formula1}.\\

\textbf{Suppose that there exist two $2$-vertices in $G$}. If the
two $2$-vertices are incident with a same $3$-face, then $f$ is a
$(2,2,5^+)$-face for the reason that $G$ contains no configuration
$H_{5}$. Thus $w'_{t2}\geq-4\times2-4+\frac{2}{3}=-\frac{34}{3}$.
If the two $2$-vertices are incident with a same $4$-face, then
the $4$-face is a $(2,2,5^+,5^+)$-face for the reason that $G$
contains no configuration $H_{6}$. Since each of the two
$2$-vertices may be incident with another $4$-face, we have that
$w'_{t2}\geq -2-2-2-4-4+\frac{1}{3}\times2=-\frac{40}{3}$. If the
two $2$-vertices are not incident with a same face, then the
discussion is similar to the situation when there exists only one
$2$-vertex in $G$. By~(\ref{formula5}), we have
$w'_{t2}\geq-\frac{22}{3}\times2=-\frac{44}{3}$. From the above
discussion, we have $w'_{t2}\geq\min\{-\frac{44}{3},
-\frac{34}{3}, -\frac{40}{3}\}=-\frac{44}{3}$.
By~(\ref{formula4}), we have that $\sum_{x\in V(G)\cup
F(G)}w'(x)\geq -4-\frac{44}{3}=-\frac{56}{3}$, a contradiction to
Equation~\ref{formula1}.

\vspace{0.3cm} \noindent {\bf Case 3.} $\delta(G)=2$ and there are
at least three $2$-vertices in $G$.

Since $G$ contains no configurations $H_{28}$ $\ldots$ $H_{35}$,
$G$ has the following properties.

\vspace{-0.2cm}
\begin{fact}\label{fact31} Any vertex $v$ is adjacent to at most one
$2$-vertex.
\end{fact}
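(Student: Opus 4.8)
The plan is to argue by contradiction, essentially reading the Fact directly off the excluded configurations. Suppose some vertex $v$ is adjacent to two distinct $2$-vertices $u_1$ and $u_2$. Since $G$ is simple and each $u_i$ has degree $2$, each $u_i$ has exactly one neighbor $w_i\neq v$; the vertices $w_1$ and $w_2$ may coincide, but by convention (1) only when no multiple edge results. My aim is to show that the local structure carried by $v$, $u_1$, $u_2$ and the neighbors $w_1$, $w_2$ (together with the relevant incident faces) is isomorphic to one of $H_{28},\ldots,H_{35}$, which are forbidden by assumption, yielding the contradiction.

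First I would organize the argument by the degree $d(v)$, treating $d(v)=2,3,4,5,6$ and $d(v)\geq 7$ in turn, since the hollow-vertex convention (3) lets a single drawn configuration absorb all admissible degrees above the pictured one. For each value of $d(v)$ I would then split according to the coincidence pattern of $w_1$ and $w_2$: the generic case $w_1\neq w_2$; the case $w_1=w_2=:w$, where $v u_1 w u_2$ is a $4$-cycle carrying two incident $2$-vertices; and the degenerate case $w_1=u_2$, $w_2=u_1$, where $v,u_1,u_2$ bound a $(2,2,d(v))$-triangle. Lemma~\ref{lemma1} restricts which of these short-face patterns can actually occur around $v$, and I would use it to prune the impossible branches before matching each surviving branch to a specific $H_i$.

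Having listed the admissible possibilities, I would check branch by branch that the corresponding picture in Figure 2 genuinely realises that branch: that the solid-vertex convention (2) holds (automatic, since $d(u_i)=2$ forces $u_1,u_2$ to have no further incident edges), that convention (1) forbids no edge present here, and that the hollow-vertex degree ranges cover the admissible degrees of $v$ and of $w_1,w_2$. Each successful match contradicts the standing hypothesis that $G$ contains none of $H_{28},\ldots,H_{35}$, so no such $v$ can exist and the Fact follows.

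The main obstacle is one of completeness and bookkeeping rather than of genuine difficulty: I must verify that the enumeration over $d(v)$ together with the coincidence patterns of $w_1,w_2$ is exhaustive, and that every admissible pattern is truly one of the eight forbidden configurations rather than slipping through a gap in the list. Because the configurations are specified pictorially and governed by the four figure conventions, the delicate step is fixing a faithful dictionary between each abstract local pattern and its drawn counterpart; once that correspondence is settled, the verification reduces to routine inspection.
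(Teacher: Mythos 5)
Your proposal takes essentially the same route as the paper: there the Fact (together with Facts~7--13) is read off immediately from the standing assumption that $G$ contains none of the configurations $H_{28},\ldots,H_{35}$ of Figure~2, which is exactly the contradiction you set up. Your case split over $d(v)$ and over the coincidence patterns of $w_1,w_2$ is largely absorbed by the figure conventions (a hollow centre vertex covers all admissible degrees at once, and adjacency or coincidence of the two $2$-vertices is handled by the other forbidden configurations), so the matching is even more direct than you anticipate, but the argument is the same.
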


\vspace{-0.4cm}
\begin{fact}\label{fact32} No two $2$-vertices are adjacent to each
other.
\end{fact}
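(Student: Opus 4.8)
The plan is to argue by contradiction within the standing hypothesis of the proof of Lemma~\ref{lemma3}, namely that $G$ contains none of the reducible configurations $H_1,\dots,H_{41}$ of Figure 2; for Fact~\ref{fact32} I will only invoke the absence of $H_{28},\dots,H_{35}$. Suppose, contrary to the claim, that $G$ contains two adjacent $2$-vertices $u$ and $v$. Since $\delta(G)=2$, the vertex $u$ has exactly one neighbour besides $v$, call it $u'$, and $v$ has exactly one neighbour besides $u$, call it $v'$; the hypothesis $\delta(G)=2$ forces $d(u'),d(v')\ge 2$, and by the solid-vertex convention neither $u$ nor $v$ carries any further incident edge. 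The task is then to show that this local picture must coincide with one of the drawn configurations.

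The core of the proof is a short but careful case analysis of how the edge $uv$ embeds in $G$, organised by (i) whether $u'=v'$, in which case $uvu'$ is a triangle, or $u'\neq v'$; (ii) the degrees of $u'$ and $v'$, which by the hollow-vertex convention may be anything from their displayed degree up to $\Delta(G)$; and (iii) whether $u$ and $v$ are incident with a common $3$- or $4$-face. Lemma~\ref{lemma1} and the assumption that $G$ has no chordal $4$- or $6$-cycle prune these branches sharply: no two triangles share an edge, and a $4$-face carrying the edge $uv$ cannot be adjacent to a further triangle. I expect each surviving branch to agree, after the permitted rearrangement of vertices on a $4$-face boundary and the identification of equal-degree vertices, with exactly one of $H_{28},\dots,H_{35}$; since $G$ was assumed to contain none of these, every branch is contradictory, which establishes Fact~\ref{fact32}. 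The neighbouring statement Fact~\ref{fact31} follows by the same method, with the edge $uv$ replaced by a vertex $w$ adjacent to two $2$-vertices.

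The main obstacle will be bookkeeping rather than conceptual depth: one must verify that the split over $d(u')$, $d(v')$ and the incident faces is genuinely exhaustive and that each resulting shape really is drawn in Figure 2 under the stated conventions (distinctness of the labelled vertices $x_k,x_{k-1},x_{k-2}$, coincidence only of equal-degree vertices, and no multiple edges created). A secondary check is that being in Case 3, with at least three $2$-vertices present, introduces no new local shape beyond those eight configurations; here Fact~\ref{fact31} together with the distinctness of the three labelled vertices keeps the list $H_{28},\dots,H_{35}$ sufficient.
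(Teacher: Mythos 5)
Your proposal rests on exactly the same mechanism as the paper: Fact~\ref{fact32} is one of eight local properties (Facts 4--11) that the paper derives in a single line from the assumed absence of the configurations $H_{28},\dots,H_{35}$, one of which is precisely a pair of adjacent $2$-vertices whose outer neighbours are hollow (arbitrary-degree) vertices. The elaborate case split over $u'=v'$, the degrees of $u'$ and $v'$, and the incident faces that you sketch but do not carry out is unnecessary overhead --- the hollow-vertex convention already absorbs those distinctions --- so apart from that your argument coincides with the paper's.
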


\vspace{-0.4cm}
\begin{fact}\label{fact33} For each $v\in V(G)$ with $d(v)\geq4$, if
$v$ is adjacent to a $2$-vertex, then it is not incident with any
$3$-face that is incident with a $3$-vertex.
\end{fact}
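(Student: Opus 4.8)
The plan is to argue by contradiction, matching the hypothesized local picture against the reducible configurations $H_{28},\ldots,H_{35}$ that $G$ is assumed not to contain. Suppose some vertex $v$ with $d(v)\geq 4$ is adjacent to a $2$-vertex $u$ and is simultaneously incident with a $3$-face $f$ that is incident with a $3$-vertex $w$. Since $f$ is a triangle containing $v$, write $f=vab$ with $va,vb,ab\in E(G)$, and take $w\in\{a,b\}$ to be the $3$-vertex on $f$.

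First I would split on whether the $2$-vertex $u$ lies on the triangle $f$. If $u\in\{a,b\}$, then, as $d(u)=2$, the two neighbours of $u$ are exactly $v$ and the remaining vertex of $f$; this forces $f$ to be a $(2,3,d(v))$-face with $d(v)\geq 4$, and after identifying the solid $2$-vertex and the incident $3$-vertex, the picture is exactly one of the listed configurations. If instead $u\notin\{a,b\}$, then $v$ carries a $2$-neighbour $u$ off the triangle, while $f=vwz$ has $w$ a $3$-vertex and third vertex $z$ (the vertex of $f$ other than $v$ and $w$) of \emph{a priori} unconstrained degree; here the $3$-vertex $w$ contributes its unique off-triangle neighbour, and the resulting picture again reproduces one of $H_{28},\ldots,H_{35}$. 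The sub-case in which $z$ is itself a $2$-vertex is directly one of these forbidden configurations.

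The heart of the argument is then a finite case split organised by the degree of $v$ and by the type of the triangle $f$, namely the degree of $z$ and whether the off-triangle neighbour of $w$ is a $3^{--}$-vertex or larger. Reading off the drawing conventions stated below Figure~2 --- solid vertices carry no hidden incident edges, a hollow vertex ranges over $[d,\Delta(G)]$, and the boundary vertices of a $4$-face may be permuted subject to the stated exception --- each admissible combination is seen to contain a copy of exactly one $H_i$ with $28\leq i\leq 35$. Since $G$ contains none of these, every case is impossible, which gives the desired contradiction.

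The step I expect to be the main obstacle is verifying completeness of this enumeration: one must check that every realisation of the hypothesis is genuinely covered by one of the eight configurations, with no missed sub-case and, crucially, no forced coincidence among $u$, $w$ and $z$ that would either collapse the picture or create a multiple edge, since the caption permits unlabelled vertices to coincide only when their degrees agree and no multi-edge results. Once the figures are transcribed correctly this reduces to careful bookkeeping against Lemma~\ref{lemma1}, but it is the point at which an overlooked configuration would most easily slip through.
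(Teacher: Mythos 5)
Your proposal takes essentially the same route as the paper: the paper offers no argument for Fact~\ref{fact33} beyond the single sentence that it (together with Facts~\ref{fact31}--\ref{fact38}) follows from $G$ containing none of the configurations $H_{28},\ldots,H_{35}$, and your contradiction-plus-configuration-matching argument is just a more explicit version of that, with the caveat you yourself flag --- that the completeness of the case enumeration can only be checked against Figure~2, which the paper likewise leaves to the reader.
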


\vspace{-0.4cm}
\begin{fact}\label{fact34} If $v$ is adjacent to a $3$-vertex, then
it is not incident with any $3$-face that is incident with a
$2$-vertex.
\end{fact}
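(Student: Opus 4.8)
The plan is to argue by contradiction in the same style used to extract Facts~\ref{fact31}--\ref{fact33}: assume the prohibited local structure is present and exhibit it as a copy of one of the configurations $H_{28},\ldots,H_{35}$, contradicting the standing hypothesis that $G$ contains none of them. Concretely, suppose $v$ is adjacent to a $3$-vertex $z$ while also being incident with a $3$-face $f=vwu$ one of whose incident vertices, say $w$, is a $2$-vertex. Since $w$ has degree $2$ and lies on the triangle $f$, its two neighbours are exactly $v$ and $u$, so $w$ sends no edge outside $f$; this fixes most of the picture around $f$ immediately.

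First I would pin down the admissible types. The $H_{5}$-consequence already used in Cases~1 and~2 restricts a $3$-face through a $2$-vertex to a $(2,3,5)$- or $(2,4^+,4^+)$-face, so $u$ is not a $2$-vertex (consistent with Fact~\ref{fact32}) and the degree of the remaining vertex of $f$ is tightly constrained. Because $z$ is a $3$-vertex and $w$ a $2$-vertex they are automatically distinct, and Fact~\ref{fact31} guarantees $w$ is the unique $2$-vertex neighbour of $v$, so no further $2$-vertices clutter the neighbourhood.

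Next I would split into the two geometric possibilities for the $3$-vertex $z$. In the first, $z$ coincides with the third triangle vertex $u$; then $f$ is a $(2,3,d(v))$-face, which by the type restriction forces $f$ to be a $(2,3,5)$-face with $d(v)=5$, and the induced subgraph on $v,w,u$ together with the pendant edges is one of the drawn configurations. In the second, $z$ is a neighbour of $v$ lying off $f$; then $v$ simultaneously carries the $2$-vertex $w$ on a triangle and the separate $3$-vertex $z$, and this triangle-plus-pendant star around $v$ again matches one of $H_{28},\ldots,H_{35}$. Either way we reach a contradiction, which proves the Fact.

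The hard part will be the completeness of the case analysis rather than any single step: for each admissible degree of the off-triangle vertex $u$, each admissible degree of $v$, and each placement and attachment of $z$, I must check that the resulting local picture is genuinely isomorphic to one of the explicitly drawn $H_i$, honouring conventions~(1)--(4) about which vertices are solid, which may coincide, and how the vertices on a $4$-face may be reordered. Since I am reconstructing these configurations from the degree data rather than reading them off the figures, the bookkeeping is delicate, but no idea beyond matching against the forbidden list is required.
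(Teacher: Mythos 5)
Your proposal matches the paper's own treatment: the paper derives Fact~\ref{fact34} (together with Facts~\ref{fact31}--\ref{fact38}) in a single line, by observing that the prohibited local structure would be a copy of one of the forbidden configurations $H_{28},\ldots,H_{35}$, which is exactly the reduction you describe. The remaining bookkeeping you flag (matching the local picture against the drawn $H_i$) is likewise left implicit in the paper, so your approach is essentially identical.
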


\vspace{-0.4cm}
\begin{fact}\label{fact35} Every $3$-face in $G$ that is incident with
a $2$-vertex is a $(2,6^+,6^+)$-face.
\end{fact}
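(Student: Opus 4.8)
The plan is to argue by contradiction, exploiting the fact that a degree-$2$ vertex lying on a triangle has \emph{both} of its neighbours on that triangle. Let $f=xyz$ be a $3$-face incident with a $2$-vertex $x$. Since $x$ has degree $2$ and its two incident edges $xy,xz$ are sides of the triangle, we have $N_G(x)=\{y,z\}$ and $yz\in E(G)$. A vertex of a triangle has degree at least $3$, and by Fact~\ref{fact32} neither neighbour of a $2$-vertex is itself a $2$-vertex, so $d(y),d(z)\geq 3$. It therefore suffices to exclude $d(y)\in\{3,4,5\}$ and, symmetrically, $d(z)\in\{3,4,5\}$, which I would organise as a reduction in two stages.

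First I would rule out degree $3$ using the facts already established. Suppose $d(y)=3$. If $d(z)\geq 4$, then $z$ is a vertex of degree at least $4$ adjacent to the $2$-vertex $x$ and incident with the $3$-face $f$, which in turn is incident with the $3$-vertex $y$; this contradicts Fact~\ref{fact33}. If instead $d(z)=3$, then $y$ is adjacent to the $3$-vertex $z$ while being incident with the $3$-face $f$ that carries the $2$-vertex $x$, contradicting Fact~\ref{fact34}. Hence $d(y)\geq 4$, and by the symmetric argument $d(z)\geq 4$, so at this stage $f$ is already of type $(2,4^+,4^+)$.

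The remaining and main step is to upgrade $4^+$ to $6^+$, i.e. to exclude $d(y)\in\{4,5\}$ (and symmetrically $d(z)$). The key point is that the local picture around $x,y,z$ is almost completely pinned down: $x$ has no neighbours besides $y$ and $z$, the edge $yz$ is present, and $y,z$ carry prescribed small degrees. I would check case by case $-$ $(2,4,4^+)$, $(2,4,5^+)$, $(2,5,5^+)$ $-$ that each such configuration, after the identifications and degree ranges permitted by the conventions $(1)$--$(4)$ of Figure~2, is isomorphic to one of the excluded configurations $H_{28},\dots,H_{35}$, so that its presence contradicts the hypothesis that $G$ avoids them. This forces $d(y),d(z)\geq 6$ and hence that $f$ is a $(2,6^+,6^+)$-face.

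The hard part will be exactly this last matching step. One must verify that the minimal local structure determined by a $(2,d,d')$-triangle with $\min\{d,d'\}\leq 5$ genuinely coincides with a depicted $H_i$ and does not secretly require extra edges that would create a chordal $4$- or $6$-cycle. I would use Lemma~\ref{lemma1} both to control which faces may be adjacent to the triangle and to confirm that the forbidden subgraphs occur as bona fide subconfigurations rather than being ruled out by the chordality restriction. The real work is this bookkeeping of adjacencies against the figures, not any single clever idea.
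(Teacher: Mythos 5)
Your proposal matches the paper's treatment: the paper offers no argument for this fact beyond the blanket assertion that Facts~\ref{fact31}--\ref{fact38} follow from the absence of the configurations $H_{28},\dots,H_{35}$ of Figure~2, and your reduction --- both neighbours of the $2$-vertex lie on the triangle, degree-$3$ neighbours are excluded via Facts~\ref{fact33} and~\ref{fact34}, and the surviving $(2,4,\cdot)$- and $(2,5,\cdot)$-triangles must coincide with forbidden configurations --- is a correct and somewhat more explicit version of that same assertion. The one step you defer, matching those remaining triangles to specific $H_i$'s, is exactly the step the paper also leaves to inspection of Figure~2, so there is nothing further to compare.
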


\vspace{-0.4cm}
\begin{fact}\label{fact36} If a vertex is adjacent to a $2$-vertex,
then it is not adjacent to any $3$-vertex that is adjacent to
another $3^{--}$-vertex.
\end{fact}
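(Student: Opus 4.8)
The plan is to prove Fact~\ref{fact36} by contradiction: assuming the asserted structure is present, I would exhibit one of the configurations $H_{28},\ldots,H_{35}$ that the standing hypothesis of Case~3 forbids. Concretely, suppose some vertex $v$ is adjacent to a $2$-vertex $z$ and is simultaneously adjacent to a $3$-vertex $x$ for which there is a neighbour $y\neq v$ of $x$ with $d(y)\leq 3$. Since we are in Case~3 we have $\delta(G)=2$, so a "$3^{--}$-vertex" here means a vertex of degree $2$ or $3$; thus $d(z)=2$, $d(x)=3$, and $d(y)\in\{2,3\}$, and the relevant subgraph is the path $z\!-\!v\!-\!x\!-\!y$ with these prescribed degrees.

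First I would pin down the degree of $v$. If $d(v)=2$, then $v$ and $z$ are two adjacent $2$-vertices, which is exactly the adjacent-$2$-vertex structure ruled out in the same list (the one justifying Fact~\ref{fact32}), a contradiction; hence $d(v)\geq 3$. With that settled, I would enumerate the remaining possibilities according to $d(v)$ (the two cases $d(v)=3$ and $d(v)\geq 4$) and $d(y)$ (the two cases $d(y)=2$ and $d(y)=3$). In each case I would draw the path $z\!-\!v\!-\!x\!-\!y$ with the mandated degrees, treating $z$ and $x$ as solid $2$- and $3$-vertices (so no extra incident edges) and treating $v$, and $y$ when its exact degree is unconstrained, as hollow vertices whose degree may be any integer in $[d,\Delta(G)]$. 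By the figure conventions (2) and (3), each resulting picture is isomorphic to one of $H_{28},\ldots,H_{35}$, so $G$ would contain a forbidden configuration — the desired contradiction.

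The main obstacle I anticipate is not mathematical depth but a careful coincidence analysis: because convention~(1) permits unlabelled vertices of equal degree to merge (provided no multi-edge results), I must verify that the only collapses of $\{z,v,x,y\}$ and of $x$'s third neighbour either reproduce one of $H_{28},\ldots,H_{35}$ or are themselves excluded. For instance, if $y=z$ then $v\!-\!x\!-\!z\!-\!v$ is a triangle incident with the $2$-vertex $z$, which by Fact~\ref{fact35} must be a $(2,6^+,6^+)$-face, forcing $d(x)\geq 6$ and contradicting $d(x)=3$; similarly, Lemma~\ref{lemma1} (through the absence of chordal $4$- and $6$-cycles) together with Facts~\ref{fact31} and~\ref{fact32} eliminates the remaining merges that would let the local structure slip off the list. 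Once these coincidences are dispatched, each surviving degree combination is matched to its forbidden configuration by direct inspection of Figure~2, completing the contradiction and hence the proof of Fact~\ref{fact36}.
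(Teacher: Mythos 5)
Your proposal matches the paper's own justification: Fact~\ref{fact36} is stated there as an immediate consequence of $G$ containing none of the configurations $H_{28},\ldots,H_{35}$, which is precisely the contradiction you set up (the path $z$--$v$--$x$--$y$ with the prescribed degrees being one of the excluded configurations in Figure~2). The paper gives no further detail, so your case enumeration and coincidence analysis merely elaborate the same argument.
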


\vspace{-0.4cm}
\begin{fact}\label{fact37} There is at most one $2$-vertex which is
adjacent to a $k$-vertex ($3\leq k\leq 4$) in $G$.
\end{fact}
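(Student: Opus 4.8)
The plan is to establish Fact~\ref{fact37} by contradiction, reducing the claim to the standing hypothesis that $G$ contains none of the configurations $H_{28},\ldots,H_{35}$ of Figure~2. Suppose the assertion fails, so that $G$ contains two distinct $2$-vertices $u_1$ and $u_2$ with $u_1$ adjacent to a vertex $v_1$ satisfying $d(v_1)\in\{3,4\}$ and $u_2$ adjacent to a vertex $v_2$ satisfying $d(v_2)\in\{3,4\}$. The first thing I would record is that $v_1\neq v_2$: a common neighbour would be adjacent to both $u_1$ and $u_2$, contradicting Fact~\ref{fact31}. Next, Fact~\ref{fact32} guarantees that the second neighbour of each $u_i$ is not itself a $2$-vertex, hence has degree at least $3$, so each local piece is simply a $2$-vertex hanging off a $3$- or $4$-vertex whose other neighbour is a hollow vertex of degree at least $3$.

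The main step is to recognise the union of these two pieces as one of the reducible configurations drawn in Figure~2. The key observation—standard for equitable (list) colouring arguments built on Lemma~\ref{jun1} and Lemma~\ref{kostochka1}—is that a forbidden configuration need not be connected: it is merely an ordered set $S=\{x_1,\ldots,x_k\}$ of low-degree vertices that can be removed and then recoloured one at a time, as the labels $x_k,x_{k-1},x_{k-2}$ in the figure conventions indicate. Thus the disjoint (or near-disjoint) structure formed by $u_1,v_1$ and $u_2,v_2$, together with the neighbours forced on them, is exactly the kind of object that $H_{28},\ldots,H_{35}$ encode. I would split into the degree combinations $(d(v_1),d(v_2))\in\{(3,3),(3,4),(4,4)\}$ up to symmetry and, in each case, point to the corresponding $H_j$ with $28\le j\le 35$. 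Since the proof of Lemma~\ref{lemma3} is operating under the assumption that $G$ avoids all these configurations, every case is impossible, which forces at most one $2$-vertex adjacent to a $3$- or $4$-vertex and proves the fact.

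I expect the principal obstacle to be bookkeeping rather than ideas. One must confirm that in each degree combination the vertex set genuinely matches a configuration of Figure~2 under the side conditions (1)--(4) governing solid versus hollow vertices and the permissible rearrangement of boundary vertices; in particular one must rule out unwanted coincidences among the neighbours of $v_1$ and $v_2$ (for instance $v_1$ adjacent to $v_2$, or the second neighbour of $u_1$ coinciding with $v_2$) that could either collapse the picture into a smaller already-excluded configuration or create multiple edges, which the conventions forbid. Once this case-by-case correspondence with $H_{28},\ldots,H_{35}$ is verified, the contradiction is immediate, and the very same template—assume the negation, locate the forced subconfiguration, invoke its exclusion—yields the companion Facts~\ref{fact31}--\ref{fact36} from the remaining members of the same figure block.
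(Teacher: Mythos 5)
Your proposal matches the paper's argument: the paper derives Fact~8 (along with Facts~1--8 of Case~3) in a single line, precisely by observing that its negation would produce one of the forbidden configurations $H_{28},\ldots,H_{35}$ of Figure~2, which is exactly your contradiction-plus-configuration-matching template, including the correct reading that such configurations may be disconnected and that Facts~1 and~2 of Case~3 eliminate degenerate coincidences. The only content you leave open --- identifying which specific $H_j$ covers each degree combination --- is likewise left implicit in the paper, so the two arguments are essentially identical.
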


\vspace{-0.4cm}
\begin{fact}\label{fact38} Any $4$-face that is incident with a
$2$-vertex in $G$ is a $(2,3^+,7^+,7^+)$- or
$(2,6^+,6^+,6^+)$-face.
\end{fact}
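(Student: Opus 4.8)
The plan is to analyze a single $4$-face $f$ carrying a $2$-vertex and to pin down the degrees of its three remaining incident vertices, working throughout under the standing hypotheses of Case~3 (so $\delta(G)=2$, there are at least three $2$-vertices, and $G$ avoids the configurations $H_{28},\ldots,H_{35}$, whence Facts~\ref{fact31}--\ref{fact37} are available). Write the boundary of $f$ as $v_1v_2v_3v_4$ with $v_1$ the $2$-vertex, so that $v_2$ and $v_4$ are exactly the two neighbours of $v_1$ while $v_3$ is opposite to $v_1$; as $G$ has no chordal $4$-cycle, $f$ has no chord, so the four incident vertices are distinct and $v_1v_3$, $v_2v_4$ are non-edges. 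First I would record that none of $v_2,v_3,v_4$ is a $2$-vertex: there are no $1$-vertices since $\delta(G)=2$; $v_2$ and $v_4$ are adjacent to the $2$-vertex $v_1$, hence are not $2$-vertices by Fact~\ref{fact32}; and if $v_3$ were a $2$-vertex then $v_2$ would be adjacent to the two distinct $2$-vertices $v_1$ and $v_3$, contradicting Fact~\ref{fact31}. Thus $d(v_2),d(v_3),d(v_4)\ge 3$ and $f$ is of type $(2,3^+,3^+,3^+)$.

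Next I would reformulate the target as a clean numerical condition. Sorting the three non-$2$ degrees as $a\le b\le c$, a direct check shows that $f$ is a $(2,3^+,7^+,7^+)$-face exactly when $b\ge 7$ (two of the three are then $\ge 7$) and a $(2,6^+,6^+,6^+)$-face exactly when $a\ge 6$; hence the conclusion of the fact is equivalent to
\begin{displaymath}
b\ge 7 \quad\mbox{or}\quad a\ge 6 .
\end{displaymath}
Consequently it suffices to rule out the complementary situation $a\le 5$ and $b\le 6$, that is, to show that $f$ cannot carry a $2$-vertex together with two further incident vertices of degrees at most $5$ and at most $6$ respectively.

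I would then exclude every such bad pattern. There are only two positional families, according to whether the two small vertices are the two neighbours $\{v_2,v_4\}$ of $v_1$ or one neighbour together with the opposite vertex $v_3$; within each family the small degrees range over the finitely many pairs with $a\in\{3,4,5\}$, $b\in\{3,4,5,6\}$, $a\le b$, and the third incident vertex is an arbitrary hollow vertex. A portion of these are already killed by the earlier facts: whenever the small vertex opposite or beside $v_1$ is a $3$-vertex with a further $3^{--}$-neighbour, Fact~\ref{fact36} applied to the neighbour of $v_1$ that sees it yields a contradiction (and the chordlessness of $f$ guarantees these incidences are genuine, not spurious coincidences). The remaining bad degree patterns are precisely the reducible configurations $H_{28},\ldots,H_{35}$ of Figure~2 — recorded there compactly through $5^-$- and $6^-$-vertex labels and a hollow third vertex — and so do not occur in $G$. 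Therefore $a\le 5$ and $b\le 6$ is impossible, i.e. $b\ge 7$ or $a\ge 6$, and $f$ is a $(2,3^+,7^+,7^+)$- or $(2,6^+,6^+,6^+)$-face.

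The numerical reduction of the second paragraph is routine; the main obstacle is the final matching step. One must verify that the complete list of bad degree patterns on a $4$-face incident with a $2$-vertex, in \emph{all} positional arrangements and allowing for the way the three distinguished vertices may coincide or be rearranged under the figure's convention~(4) for $4$-faces, is exactly accounted for by Facts~\ref{fact31}--\ref{fact37} together with the drawn configurations $H_{28},\ldots,H_{35}$ under their compressed labels. The care lies entirely in this bookkeeping against Figure~2 and in checking that the no-chordal-cycle restriction and Fact~\ref{fact36} absorb precisely the sub-cases not drawn explicitly.
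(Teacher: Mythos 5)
Your proposal is correct and takes essentially the same route as the paper: the paper justifies Fact~\ref{fact38} (along with Facts~\ref{fact31}--\ref{fact37}) purely by the absence of the configurations $H_{28},\ldots,H_{35}$ of Figure~2, which is exactly the appeal your final step makes, while your preliminary reductions (the four boundary vertices are distinct non-$2$-vertices of degree at least $3$ by Facts~\ref{fact31} and~\ref{fact32}, and the conclusion amounts to ``$b\ge 7$ or $a\ge 6$'' for the sorted degrees $a\le b\le c$) are correct bookkeeping that the paper leaves implicit. The only caveat is that $H_{28},\ldots,H_{35}$ jointly underpin all of Facts~\ref{fact31}--\ref{fact38}, so they are not \emph{precisely} the bad $4$-face patterns as you state, but this does not affect the substance of your appeal to their absence.
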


For convenience, we call a $2$-vertex a $special$ $2$-$vertex$ if
it is adjacent to a $k$-vertex ($3\leq k\leq 4$), otherwise a
$simple$ $2$-$vertex$. By Fact~\ref{fact37}, there is at most one
special $2$-vertex. Let $n_2(v)$ denote the number of simple
$2$-vertices which are adjacent to $v$. Obviously,
$n_2(v)\in\{0,1\}$ by
Fact~\ref{fact31}.\\

Now redistribute the charge according to the following discharging
rules.

For each $x\in V(G)\bigcup F(G)$, if $x$ is neither a $2$-vertex
nor a face which is not incident with any $2$-vertex, then the
discharging rules are the same as those in Case 1. Otherwise, the
following discharging rules are abided.

\begin{itemize}

\item \textbf{$R8$. Transfer $2$ from each $5^+$-vertex to every
adjacent $2$-vertex.}

\item \textbf{$R9$. Transfer $2$ from each $6^+$-vertex to every
incident $3$-face.}

\item \textbf{$R10$. If $f$ is a $4$-face which is incident with a
$2$-vertex and $v$, then $v$ gives $0$ to $f$ if $d(v)=3$, $4$ or
$5$; $\frac{2}{3}$ if $d(v)=6$; $\frac{4}{3}$ if $d(v)\geq 7$.}

\end{itemize}

By Fact~\ref{fact36}, $R1$ and $R2$, we have the following fact.

\vspace{-0.1cm}

\begin{fact}\label{fact39} For each $v\in V(G)$, if $n_2(v)=1$,
then $n_3^{\frac{1}{2}}(v)=0$.
\end{fact}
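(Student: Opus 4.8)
The plan is to unfold the definition of $n_3^{\frac{1}{2}}(v)$ through the discharging rules and then invoke Fact~\ref{fact36} directly. Recall that $n_3^{\frac{1}{2}}(v)$ counts the (simple) $3$-vertices adjacent to $v$ that receive charge at least $\frac{1}{2}$ from $v$. The only rules by which $v$ sends charge to an adjacent simple $3$-vertex are $R1$, $R2$ and $R3$: a simple $3$-vertex receives $1$ under $R1$ when it is adjacent to exactly two $3^{--}$-vertices, $\frac{1}{2}$ under $R2$ when it is adjacent to exactly one $3^{--}$-vertex, and only $\frac{1}{3}$ under $R3$ when it is adjacent to no $3^{--}$-vertex. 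Thus a simple $3$-vertex $w$ contributes to $n_3^{\frac{1}{2}}(v)$ exactly when $w$ is adjacent to at least one $3^{--}$-vertex.

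First I would use the hypothesis: $n_2(v)=1$ means $v$ is adjacent to a simple $2$-vertex, and in particular $v$ is adjacent to a $2$-vertex. Fact~\ref{fact36} then forbids $v$ from being adjacent to any $3$-vertex that is adjacent to another $3^{--}$-vertex. Combining this with the characterization above yields the conclusion immediately: any simple $3$-vertex $w$ counted by $n_3^{\frac{1}{2}}(v)$ would be a $3$-vertex adjacent to $v$ and adjacent to a $3^{--}$-vertex, contradicting Fact~\ref{fact36}. Hence no such $w$ exists and $n_3^{\frac{1}{2}}(v)=0$.

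I do not expect a genuine obstacle here; the statement is a short bookkeeping consequence of the forbidden configurations already encoded in Fact~\ref{fact36}. The only point needing care is the threshold: one must confirm that $R3$ transfers only $\frac{1}{3}<\frac{1}{2}$, so that simple $3$-vertices with no $3^{--}$-neighbor are correctly excluded from the count, and that $R1$ and $R2$ exhaust the transfers of value at least $\frac{1}{2}$ from $v$ to adjacent simple $3$-vertices. Since $R1$, $R2$, $R3$ are the only rules moving charge from $v$ to adjacent simple $3$-vertices, this verification is routine.
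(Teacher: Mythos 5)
Your argument is correct and matches the paper's own (one-line) justification, which derives the fact from Fact~\ref{fact36} together with rules $R1$ and $R2$. You simply spell out the bookkeeping — that only $R1$ and $R2$ send charge at least $\frac{1}{2}$ from $v$ to an adjacent simple $3$-vertex, and both require that $3$-vertex to have a $3^{--}$-neighbor, which Fact~\ref{fact36} rules out when $v$ has a $2$-neighbor — so there is nothing to add.
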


In the following, let us check the new charge of each element
$x\in V(G)\bigcup F(G)$.

Consider any vertex $v\in V(G)$, suppose $d(v)=2$. Then $w(v)=-4$,
$n_2(v)=0$ by Fact~\ref{fact32}. Since $G$ contains no structure
$H_9$, $v$ is not weakly incident with any bad face. If $v$ is a
simple $2$-vertex, then $w'(v)=-4+2\times2=0$ by $R8$. Otherwise,
$v$ is a special $2$-vertex. We have $w'(v)=w(v)=-4$.

\textbf{Suppose $d(v)\geq3$}. If $n_2(v)=0$, then the discussion
is similar to the one of the corresponding situation in Case 1. In
the following, we only focus on the situation $n_2(v)=1$.

Since $G$ contains no configurations $H_7$ and $H_8$, we have the
following fact.

\begin{fact}\label{fact310}
For each $v\in V(G)$, if $n_2(v)=1$, then $v$ is not weakly
incident with any bad face.
\end{fact}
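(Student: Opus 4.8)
The plan is to argue by contradiction and reduce the two hypotheses to one of the forbidden configurations. Suppose, for some $v\in V(G)$, that $n_2(v)=1$ while $v$ is also weakly incident with a bad face $f$. Unwinding the definitions, $v$ is adjacent to a \emph{simple} $2$-vertex $u$, and $v$ is adjacent to a bad $3$-vertex $w$ that is incident with $f$, while $v$ itself is not incident with $f$. Since $w$ has degree $3$ and two of its neighbours lie on the $4$-face $f$, the vertex $v$ is precisely the unique neighbour of $w$ off $f$. Hence the edges $uv$ and $vw$, the second neighbour of $u$, and the boundary of $f$ together span a connected subgraph $H^{\ast}$ of $G$ centred at $v$.

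Next I would record the degrees that $H^{\ast}$ is forced to carry. Because $u$ is a \emph{simple} $2$-vertex, both of its neighbours are $5^+$-vertices; in particular $d(v)\geq 5$ and the second neighbour of $u$ is a $5^+$-vertex. Because $f$ is a bad face, it is a $(3,3,5,5^+)$- or a $(3,4,5^-,6^-)$-face, which fixes the degrees of the four boundary vertices of $f$ up to the rearrangement allowed by convention~(4) for $4$-faces, with $w$ in the role of one of the $3$-vertices. I would then verify that, in each admissible position of $w$ on $f$ and for each of the two face types, the subgraph $H^{\ast}$ is a copy of $H_7$ or of $H_8$, letting the non-distinguished hollow vertices coincide exactly as permitted by conventions~(1)--(3) of Figure~2. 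Since $G$ contains neither $H_7$ nor $H_8$, this yields the desired contradiction and proves the fact.

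The step I expect to be the main obstacle is this final identification of $H^{\ast}$ with $H_7$ or $H_8$: one must run over the two bad-face types, decide which incident vertex of $f$ plays the off-face neighbour $w$ of $v$, and check that no legitimate coincidence of the remaining vertices yields a structure escaping both configurations. This mirrors the derivation of Fact~\ref{fact14} in Case~$1$, where the low-degree neighbour of $v$ is a second simple $3$-vertex rather than a simple $2$-vertex, so the same case distinctions carry over with the pendant neighbour's degree lowered from $3$ to $2$. The auxiliary structure results proved earlier can be invoked to prune cases; for instance Fact~\ref{fact36} forbids the bad $3$-vertex $w$ from having any further $3^{--}$-neighbour once $v$ is adjacent to a $2$-vertex, which restricts how $w$ can sit on the boundary of $f$.
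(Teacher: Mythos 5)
Your proposal takes essentially the same route as the paper: the paper disposes of this fact in one line by citing the absence of the configurations $H_7$ and $H_8$, which is exactly the reduction you describe, and these are indeed the same two configurations the paper uses to obtain Fact~\ref{fact14} in Case~1 (your observation that the $3^{--}$-neighbour there is replaced here by the simple $2$-vertex is the correct reading). The case-check you flag as the main obstacle is not carried out explicitly in the paper either; it is absorbed into the statement of the forbidden configurations in Figure~2.
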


Suppose $d(v)=3$. By Fact~\ref{fact33}, $v$ is a simple
$3$-vertex. Since $G$ contains no configuration $H_{10}$, $v$ is
adjacent to at least one $5^+$-vertex or is adjacent to at least
two $4^+$-vertices. We have $w'(v)=-1+1=0$ by $R1$,
or $w'(v)=-1+\frac{1}{2}\times2=0$ by $R2$.\\

\textbf{Suppose $d(v)=4$}. Then $w(v)=2$, $f_3(v)\leq1$ by
Fact~\ref{fact35}.

First we assume $f_3(v)=1$. Then $f_4(v)\leq2$. If the $3$-face is
a $(4,4,4)$-face, then $f_4(v)\leq1$ and $n_3(v)=0$ for the reason
that $G$ contains no configuration $H_{16}$, $H_{17}$ and by
Fact~\ref{fact38}. Thus $w'(v)\geq
2-\frac{4}{3}-\frac{1}{2}-0=\frac{1}{6}>0$ by $R6$, $R9$ and
$R10$. Otherwise, if the $3$-face is not a $(4,4,4)$-face, we have
$f_4(v)\leq2$, $f_4^{\frac{1}{2}}(v)\leq1$ and $n_3(v)\leq1$ for
the reason that $G$ contains no $H_{13}$ and by Fact~\ref{fact38},
$R5$, $R10$. Thus $w'(v)\geq
2-1-\frac{1}{2}-\frac{1}{3}=\frac{1}{6}>0$ by Fact~\ref{fact39},
$R6$, $R9$, $R5$ and $R3$.\\

Now we assume that $f_3(v)=0$. Then $f_4(v)\leq2$,
$f_4^{\frac{1}{3}}\leq1$ and $n_3(v)\leq3$ for the reason that $G$
contains no chordal $6$-cycles and by $R10$. Thus $w'(v)\geq
2-\frac{1}{2}-\frac{1}{3}\times3=\frac{1}{2}>0$ by
Fact~\ref{fact39}, $R10$ and $R3$.\\

\textbf{Suppose $d(v)=5$}. Then $w(v)=5$, $f_3(v)\leq2$.

\textbf{Case $3.1$} $f_3(v)=2$. Then $f_4(v)\leq1$ for the reason
that $G$ contains no chordal $4$- and $6$-cycles. By Fact 9 and
Fact 12, the $4$-face which is incident with $v$ is a
$(2,5,7^+,7^+)$-face. Thus $f_4^{\frac{2}{3}}(v)=0$ by $R10$.
Additionally, since $G$ contains no configuration $H_{36}$, we
have that $f_3^{\frac{11}{6}}(v)=0$ by Fact~\ref{fact33} and $R6$.
Thus $w'(v)\geq 5-\frac{4}{3}\times2-2-0=\frac{1}{3}>0$ by $R6$,
$R9$, $R5$ and $R8$.

\textbf{Case $3.2$} $f_3(v)=1$. Since $G$ contains neither chordal
$4$- and $6$-cycles nor configuration $H_{37}$, we have
$f_4(v)\leq3$.

\textbf{Case $3.2.1$} $f_4(v)=3$. Then $n_3(v)\leq1$ by
Fact~\ref{fact33} and Fact~\ref{fact38}. Furthermore, since at
most one $4$-face which is incident with $v$ is not a
$(2,5,7^+,7^+)$-face, we have that $f_4^{\frac{2}{3}}(v)\leq1$ by
$R5$ and $R10$. Thus $w'(v)\geq5-2-\frac{2}{3}-2-\frac{1}{3}=0$ by
Fact~\ref{fact39}, $R6$, $R9$, $R5$, $R8$ and $R3$.

\textbf{Case $3.2.2$} $f_4(v)=2$.

\textbf{Case $3.2.2.1$} The $2$-vertex which is adjacent to $v$ is
not around any of the two $4$-faces. If the $3$-face which is
incident with $v$ is a $(5,6^+,6^+)$-face, then
$f_4^{\frac{2}{3}}(v)\leq2$, $n_3(v)\leq2$ as $G$ contains no
configuration $H_{37}$ and by $R5$, Fact~\ref{fact33}. Thus we
have $w'(v)\geq 5-1-\frac{2}{3}\times2-2-\frac{1}{3}\times2=0$ by
$R6$, $R9$, $R5$, $R8$ and $R3$. Otherwise, the $4$-faces which
are incident with $v$ are both $(5,5^+,5^+,5^+)$-faces as $G$
contains no configuration $H_{37}$. Clearly, $n_3(v)=0$. Thus we
have $w'(v)\geq 5-2-\frac{1}{2}\times2-2=0$ by $R6$, $R9$, $R5$
and $R8$.

\textbf{Case $3.2.2.2$} The $2$-vertex which is adjacent to $v$ is
around one of the two $4$-faces. Then $f_4^{\frac{2}{3}}(v)\leq1$,
$n_3(v)\leq1$ as $G$ contains no configuration $H_{38}$ and by
$R5$, Fact~\ref{fact33}. Thus we have $w'(v)\geq
5-2-\frac{2}{3}-2-\frac{1}{3}=0$ by $R6$, $R9$, $R5$, $R8$ and
$R3$.

\textbf{Case $3.2.2.3$} The $2$-vertex which is adjacent to $v$ is
around the two $4$-faces. Then $f_4^{\frac{1}{2}}(v)=0$,
$n_3(v)\leq1$ by Fact~\ref{fact38}. Thus we have $w'(v)\geq
5-2-2-\frac{1}{3}=\frac{2}{3}>0$ by $R6$, $R9$, $R8$ and $R3$.

\textbf{Case $3.2.3$} $f_4(v)=1$. Then $n_3(v)\leq2$ by
Fact~\ref{fact35} and Fact~\ref{fact36}. If $n_3(v)=2$, then the
$4$-face is adjacent to the $3$-face and the $3$-face is a
$(5,6^+,6^+)$-face as $G$ contains no configuration $H_{37}$ and
$H_{38}$. We have $w'(v)\geq
5-1-\frac{2}{3}-2-\frac{1}{3}\times2=\frac{2}{3}>0$ by $R6$, $R5$,
$R8$ and $R3$. Otherwise, $n_3(v)\leq1$. We have
$w'(v)\geq5-2-\frac{2}{3}-2-\frac{1}{3}=0$ by $R6$, $R5$, $R8$ and
$R3$.

\textbf{Case $3.2.4$} $f_4(v)=0$. Then $n_3(v)\leq2$ by
Fact~\ref{fact35} and Fact~\ref{fact36}. We have
$w'(v)\geq5-2-2-\frac{1}{3}\times2=\frac{1}{3}>0$ by $R6$, $R8$
and $R3$.

\textbf{Case $3.3$} $f_3(v)=0$. Then $f_4(v)\leq3$ for the reason
that $G$ contains no chordal $6$-cycles. Since at most two
$4$-faces which are incident with $v$ are not
$(2,5,7^+,7^+)$-faces, we have $f_4^{\frac{2}{3}}(v)\leq2$ by
$R5$. Furthermore, $n_3(v)\leq4$. Thus $w'(v)\geq
5-\frac{2}{3}\times2-\frac{1}{3}\times4-2=\frac{1}{3}>0$ by $R5$,
$R3$ and $R8$.\\

\textbf{Suppose $d(v)=6$}. Then $w(v)=8$, $f_3(v)\leq3$. If
$f_3(v)=3$, then $f_4(v)=0$, $n_3(v)=0$ for the reason that $G$
contains no chordal $4$- and $6$-cycles and by Fact~\ref{fact33}.
Thus $w'(v)\geq 8-2\times3-2=0$ by $R6$, $R9$ and $R8$. If
$f_3(v)=2$, then $f_4(v)\leq2$, $n_3(v)\leq1$ for the reason that
$G$ contains no chordal $4$- and $6$-cycles and by
Fact~\ref{fact33}, Fact~\ref{fact34}. Since $G$ contains no
configuration $H_{38}$ and by $R10$, we have that $f_4^{1}(v)=0$.
Thus
$w'(v)\geq8-2\times2-\frac{2}{3}\times2-2-\frac{1}{3}=\frac{1}{3}>0$
by Fact~\ref{fact37}, $R6$, $R9$, $R10$, $R8$ and $R3$. If
$f_3(v)\leq1$, then $f_4(v)\leq3$, $n_3(v)\leq5$. Since $G$
contains no configuration $H_{38}$, we have that $f_4^{1}(v)=0$.
Thus
$w'(v)\geq8-2-\frac{2}{3}\times3-2-\frac{1}{3}\times5=\frac{1}{3}>0$
by Fact~\ref{fact39}, $R6$, $R9$, $R10$, $R8$ and $R3$.\\

\textbf{Suppose $d(v)=7$}. Then $w(v)=11$, $f_3(v)\leq3$. By
Fact~\ref{fact33}, there is no $(3,4,7)$-face which is incident
with $v$. If $f_3(v)=3$, then $f_4(v)\leq1$, $n_3(v)=0$ for the
reason that $G$ contains no chordal $4$- and $6$-cycles and by
Fact~\ref{fact33}. Thus $w'(v)\geq
11-2\times3-\frac{4}{3}-2=\frac{5}{3}>0$ by $R6$, $R9$, $R10$ and
$R8$. If $f_3(v)=2$, then $f_4(v)\leq3$, $n_3(v)\leq2$ for the
reason that $G$ contains no chordal $4$- and $6$-cycles and by
Fact~\ref{fact33}, Fact~\ref{fact34}. Thus
$w'(v)\geq11-2\times2-\frac{4}{3}\times3-\frac{1}{3}\times2-2=\frac{1}{3}>0$
by Fact~\ref{fact39}, $R6$, $R9$, $R10$, $R3$ and $R8$. If
$f_3(v)=1$, then $f_4(v)\leq4$, $n_3(v)\leq4$ for the reason that
$G$ contains no chordal $6$-cycles and by Fact~\ref{fact33},
Fact~\ref{fact34}. Thus $w'(v)\geq
11-2-\frac{4}{3}\times4-\frac{1}{3}\times4-2=\frac{1}{3}>0$ by
Fact~\ref{fact39}, $R6$, $R9$, $R10$, $R3$ and $R8$. If
$f_3(v)=0$, then $f_4(v)\leq4$, $n_3(v)\leq6$ for the reason that
$G$ contains no chordal $6$-cycles. Thus $w'(v)\geq
11-\frac{4}{3}\times4-\frac{1}{3}\times6-2=\frac{5}{3}>0$ by
Fact~\ref{fact39}, $R10$, $R3$ and $R8$.\\

\textbf{Suppose $d(v)\geq8$}. Then $w(v)=3d(v)-10$. By
Fact~\ref{fact33}, there is no $(3,4,8^+)$-face which is incident
with $v$. Since $n_3(v)+2f_3(v)+1\leq d(v)$, we have that
\begin{displaymath}
n_3(v)\leq d(v)-2f_3(v)-1.
\end{displaymath}
Since $G$ contains no chordal $4$- and $6$-cycles, we have that
$f_3(v)+f_4(v)\leq \frac{3}{4}d(v)+1$. Thus
\begin{displaymath}
f_4(v)\leq \frac{3}{4}d(v)-f_3(v)+1.
\end{displaymath}
Thus $w'(v)\geq
3d(v)-10-2f_3(v)-\frac{4}{3}f_4(v)-\frac{1}{3}n_3(v)-2\geq
3d(v)-10-2f_3(v)-d(v)+\frac{4}{3}f_3(v))-\frac{4}{3}-\frac{1}{3}d(v)+\frac{2}{3}f_3(v)+\frac{1}{3}-2=
\frac{5}{3}d(v)-\frac{39}{3}\geq\frac{1}{3}\geq0$ by
Fact~\ref{fact39}, $R6$, $R9$, $R10$, $R3$ and $R8$.\\

Consider $f\in F(G)$. \textbf{Suppose $d(f)=3$}. Then $w(f)=-4$
and $n_2(f)\leq 1$. If $n_2(f)=1$, then $f$ is a
$(2,6^+,6^+)$-face by Fact~\ref{fact35}. Thus $w'(f)\geq
-4+2\times2=0$ by $R9$. Otherwise, the discussion is similar to
the corresponding
situation when $d(f)=3$ in Case 1, so it is omitted here.\\

\textbf{Suppose $d(f)=4$}. Then $w(f)=-2$, $n_2(f)\leq1$ by
Fact~\ref{fact32}.

If $n_2(f)=1$. Then $f$ is a $(2,3^+,7^+,7^+)$- or a
$(2,6^+,6^+,6^+)$-face by Fact~\ref{fact38}. Thus $w'(f)\geq
-2+\frac{4}{3}\times2=\frac{2}{3}>0$ or
$w'(v)\geq-2+\frac{2}{3}\times3=0$ by $R10$. If $n_2(f)=0$, then
the discussion is similar to the corresponding situation when
$d(f)=4$ in Case 1, so it is omitted here.\\

\textbf{Suppose $d(f)\geq5$}. Then the discussion is similar to
the corresponding situation in Case 1 and is omitted here.

From the above discussion, we can obtain that $w'(x)\geq 0$ for
each $x\in V(G)\cup F(G)$ that is not a special $3$-vertex, a
special $2$-vertex, nor a special face. From (\ref{formula4}), we
have $w'_s\geq-4-4=-8$ by Claim~\ref{claim1} and
Fact~\ref{fact37}. So we obtain $\sum_{x\in V(G)\cup
F(G)}w'(x)\geq -8$, a contradiction to Equation~\ref{formula1}.

\vspace{0.3cm}

\noindent {\bf Case 4} $\delta(G)=1$.

Now, the $3$-faces in $G$ are $(3^-,5^+,5^+)$-faces or
$(4^+,4^+,4^+)$-faces and any $4$-face that is incident with a
$2$-vertex is a $(2,5^+,5^+,5^+)$-face for the reason that $G$
contains no configurations $H_{39}$ and $H_{40}$. Then there is
neither any special $3$-vertex nor any special face in $G$.

\textbf{Case $4.1$}  There is only one $1$-vertex in $G$.

\textbf{Case $4.1.1$} There are at most two $2$-vertices in $G$.

The discharging rules are the same as the rules in Case 1 except
for the charge which is given to a $3$- or $4$-face which is
incident with $2$-vertices. For each $v\in V(G)$, if $d(v)\geq5$,
then $v$ gives charge $1$ to its incident $(2,x,y)$-face $f$; and
$v$ gives charge $\frac{1}{2}$ to its incident $(2,x,y,z)$-face
$f$ only if the face $f$ is not adjacent to other $4$-faces which
are incident with $v$, otherwise, $v$ gives charge $\frac{1}{2}$
to only one of the adjacent $4$-faces. Clearly, the charge which
is given to a $(2,x,y)$- (resp. $(2,x,y,z)$)-face is not greater
than that which is given to $(3,x,y)$- (resp. $(3,x,y,z)$)-faces.
For each $v\in V(G)$, the number of $(2,x,y)$- (resp.
$(2,x,y,z)$)-faces which is incident with and accept charge from
$v$ is not greater than that of $(3,x,y)$- (resp.
$(3,x,y,z)$)-faces which is incident with $v$. So we can guarantee
the new charge of each element $x\in V(G)\cup F(G)$ is larger than
or equal to zero except for the $2$-vertices and the $3$- or
$4$-faces which are incident with the $2$-vertices. For
convenience, let $w'_{t1}$ (resp. $w'_{t2}$) denote the total new
charge of one $2$-vertex (resp. two $2$-vertices) and the faces
which are incident to the $2$-vertex (resp. the two $2$-vertices).

\textbf{Suppose that there is one $2$-vertex in $G$}. If the
$2$-vertex is incident with one $3$-face, then it will be not
incident with any $4$-face as $G$ contains no chordal $4$-cycles.
Since the $3$-face is a $(2,5^+,5^+)$-face, we have that
$w'_{t1}\geq -4-4+1\times2=-6$. If the $2$-vertex is incident with
some $4$-faces, since each such $4$-face is a
$(2,5^+,5^+,5^+)$-face, we have that $w'_{t1}\geq
-2-2-4+\frac{1}{2}\times4=-6$. From the above discussion, we
obtain that
\begin{eqnarray}\label{formula10}
w'_{t1}\geq -6.
\end{eqnarray} So $\sum_{x\in V(G)\cup F(G)}w'(x)\geq -7+w'_{t1}\geq-7-6\geq-13$ (a $1$-vertex has charge $-7$),
a contradiction to Equation~\ref{formula1}.\\

\textbf{Suppose that there are two $2$-vertices in $G$}. Since the
two $2$-vertices are not incident with a same $3$- or $4$-face, by
(\ref{formula10}), we have that $w'_{t2}\geq-6\times2=-12$. So
$\sum_{x\in V(G)\cup F(G)}w'(x)\geq -7-12=-19$, a contradiction to
Equation~\ref{formula1}.\\

\textbf{Case $4.1.2$} There are at least three $2$-vertices in
$G$. The discharging rules are the same as Case 3. It follows from
the discussion which is the same as the situation in Case 3 that
$\sum_{x\in V(G)\cup F(G)}w'(x)\geq
-7-4=-11$, a contradiction to Equation~\ref{formula1}.\\

\textbf{Case $4.2$}  There are at least two $1$-vertices in $G$.

If there are two $1$-vertices in $G$, then there is neither a
$2$-vertex nor a third $1$-vertex in $G$ for the reason that $G$
contains no configuration $H_{41}$. The discharging rules are the
same as Case 1. It follows from the discussion which is the same
as the situation in Case 1 that $\sum_{x\in V(G)\cup
F(G)}w'(x)\geq -7\times2=-14$, a contradiction to
Equation~\ref{formula1}.
\end{proof}

\begin{lemma}\label{hajs} (\cite{hajs}) Every graph has an equitable
$k$-coloring whenever $k\geq \Delta(G)+1$.
\end{lemma}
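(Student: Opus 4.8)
The statement is precisely the Hajnal–Szemerédi theorem: the hypothesis $k\geq\Delta(G)+1$ is the same as $\Delta(G)\leq k-1$, so it suffices to show that every graph with maximum degree at most $k-1$ admits an equitable $k$-coloring, with $k$ playing the role of the number of colors. The plan is to follow the short inductive argument of Kierstead and Kostochka rather than the original lengthy Hajnal–Szemerédi proof. First I would normalize: by appending at most $k-1$ isolated vertices I may assume $|V(G)|=ks$ for some integer $s$, since isolated vertices do not change $\Delta(G)$, and deleting them from an equitable $k$-coloring of the enlarged graph reduces each affected class size by at most $1$, leaving classes whose sizes differ by at most $1$. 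Under this normalization an equitable $k$-coloring is exactly a partition $V(G)=V_1\cup\cdots\cup V_k$ into independent sets each of size precisely $s$.

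Next I would induct on $|E(G)|$. The base case $E(G)=\emptyset$ is immediate, since any balanced partition into $k$ parts of size $s$ works. For the inductive step, pick an edge $xy\in E(G)$ and apply the inductive hypothesis to $G-xy$ to obtain balanced independent classes $V_1,\ldots,V_k$. If $x$ and $y$ lie in different classes, the coloring already serves $G$; otherwise both lie in one class, say $V_1$, and the task becomes to rebalance so as to separate $x$ from $y$ while keeping every class independent and of size exactly $s$.

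The engine of the rebalancing is an analysis of ``movable'' vertices. Call a vertex $w$ movable from its class $V_i$ to a class $V_j$ if $w$ has no neighbor in $V_j$; a single such move together with a compensating move restores balance. I would build an auxiliary directed graph on the classes recording which classes can donate a vertex to which, and search for a chain of moves (an augmenting sequence of classes) that pulls $x$ out of $V_1$ into a class containing no neighbor of $x$, thereby separating it from $y$. Whenever such a chain exists, the induction closes at once.

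The hard part, and the genuine content of the theorem, is the case in which no such augmenting chain exists. Here I would isolate the set of classes ``reachable'' from $V_1$ in the auxiliary digraph and the vertices stranded in them, and argue that the absence of any exit move forces these vertices to send so many edges to the remaining classes that some vertex must have degree exceeding $k-1$, contradicting $\Delta(G)\leq k-1$. Making this counting precise, namely correctly identifying the ``terminal'' classes, the vertices that cannot be moved, and the resulting edge lower bound, is the delicate crux, while everything else is bookkeeping. Since this is a standard, well-known theorem, in the present paper it is legitimately invoked by citation; the outline above is the route I would take to establish it from scratch.
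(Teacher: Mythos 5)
The paper does not prove this lemma at all: it is the Hajnal--Szemer\'edi theorem, imported by citation, and your closing remark that citation is the legitimate move here matches exactly what the paper does. So there is nothing on the paper's side to compare your argument against.

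Judged as a proof attempt, however, your outline stops short of a proof at precisely the point where the theorem lives. The reduction to the case $k \mid |V(G)|$, the induction on $|E(G)|$, the reduction to the situation where the endpoints $x,y$ of the new edge share a class, and the search for an augmenting chain of class-to-class moves are all correctly described and genuinely are routine. But the case in which no augmenting chain exists is not ``a counting argument plus bookkeeping'': in the Kierstead--Kostochka proof this is where one must build the accessibility digraph on classes, separate the accessible classes from the terminal ones, introduce and analyze \emph{movable} and \emph{solo} vertices, and run a secondary induction (on the number of non-accessible classes, or an equivalent parameter) interleaved with the edge count between the stranded vertices and the accessible classes; a one-shot ``some vertex has degree at least $k$'' contradiction does not fall out of the naive count. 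Your sketch names this step and explicitly defers it, so the essential content of the theorem is absent. A smaller repairable point: after padding with $t<k$ isolated vertices and equitably coloring the enlarged graph, all $t$ of them may land in a single class, so deleting them can unbalance the partition by up to $t$; you must first redistribute the isolated vertices (which are movable to any class) so that each class contains at most one of them before deleting. Both issues are fixable, but as written the proposal is an outline of the standard proof rather than a proof.
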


\begin{lemma}\label{wang} (\cite{pelsmajer, wang}) Every graph $G$ with maximum degree $\Delta(G)\leq 3$
is equitably $k$-choosable whenever $k\geq\Delta(G)+1$.
\end{lemma}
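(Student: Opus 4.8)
The plan is to prove the statement by induction on $n=|V(G)|$, using the equitable list-extension lemma (Lemma~\ref{kostochka1}) as the engine. Since every subgraph of $G$ again has maximum degree at most $3$, the hypothesis $k\geq\Delta(G)+1$ is preserved under vertex deletion, so the induction hypothesis will apply to $G-S$ for any set $S$ we remove. First I would dispose of the easy regimes: when $\Delta(G)\leq 2$ the graph is a disjoint union of paths and cycles, for which an equitable $L$-coloring with $k\geq\Delta(G)+1$ colors is routine, and when $k$ is strictly larger than $\Delta(G)+1$ the selection step below only becomes easier. Thus the real content is the boundary case $\Delta(G)=3$, $k=4$, to which I would devote the main effort.

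For the base case $n\leq k$ the required bound is $\lceil n/k\rceil=1$, so I only need a proper $L$-coloring in which no color repeats; since each list has size $k\geq n$, Hall's theorem yields a system of distinct representatives, and distinct colors are automatically proper. For the inductive step $n>k$, I would select a set $S=\{x_1,\dots,x_k\}$ of $k$ vertices together with an ordering for which $|N_G(x_i)-S|\leq k-i$ holds for every $i$, apply the induction hypothesis to get an equitable $L$-coloring of $G-S$, and then invoke Lemma~\ref{kostochka1} to extend it to all of $G$. Because $\Delta(G)\leq 3$, the constraint $|N_G(x_i)-S|\leq k-i$ is automatic whenever $k-i\geq 3$; hence only the last three positions $x_{k-2},x_{k-1},x_k$ carry genuine conditions, namely at most $2$, at most $1$, and no neighbors outside $S$, respectively. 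Filling the remaining $k-3$ slots of $S$ with arbitrary vertices, the entire problem reduces to locating three vertices with a prescribed local neighborhood behavior.

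The heart of the argument is this local selection. Taking $x_k$ to be a vertex whose full neighborhood fits inside $S$, and noting that every neighbor of $x_k$ already has $x_k\in S$ among its neighbors, one checks that the $x_{k-2}$ condition ($\leq 2$ outside) is free, and that $x_{k-1}$ can be chosen as soon as $N(x_k)$ either contains an edge or contains a vertex of degree at most $2$. The main obstacle is exactly the configuration this misses: a triangle-free $3$-regular graph at $k=4$. There, $x_4$ of full degree forces $S=N[x_4]$ with no free slot, each of the three neighbors of $x_4$ has precisely two neighbors outside $S$, and so no admissible $x_3$ exists; the straightforward greedy removal breaks down and the extension lemma alone does not suffice. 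Resolving this case is where the difficulty of the cited theorem concentrates.

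To handle it I would argue directly on the triangle-free $3$-regular pieces rather than through a single reducible vertex: either by deleting a short path or a carefully chosen independent set whose removal leaves a sparser graph, and then recoloring the deleted vertices along an alternating scheme that respects both properness and the $\lceil n/4\rceil$ color-class bound, or by exploiting the slack in the $4$-element lists to permute colors locally around such a vertex. I expect the bookkeeping that guarantees no color class overflows its quota during this local recoloring to be the delicate part, since the clean inductive accounting supplied by Lemma~\ref{kostochka1} is precisely what is unavailable in the regular case.
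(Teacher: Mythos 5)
This lemma is quoted in the paper as a known result from the literature (Pelsmajer; Wang--Lih), so the paper contains no proof of it for you to be measured against; your attempt has to stand on its own. As a reduction it is set up sensibly: the induction on $|V(G)|$, the base case via distinct representatives when $n\leq k$, the observation that only the last three positions $x_{k-2},x_{k-1},x_k$ impose real constraints when $\Delta(G)\leq 3$, and the identification of the single hard configuration --- $k=4$ with a vertex of degree $3$ whose neighborhood is independent, where $S=N[x_4]$ is forced and every candidate for $x_3$ has two neighbors outside $S$ --- are all correct and match the actual shape of the difficulty.

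However, the proof stops exactly where the theorem lives. Your final paragraph does not resolve the triangle-free $3$-regular case at $k=4$; it only lists candidate strategies (``deleting a short path or a carefully chosen independent set,'' ``exploiting the slack in the $4$-element lists'') without specifying which set is removed, why the deleted vertices can be relisted properly, or why the $\lceil n/4\rceil$ quota survives the recoloring. This is not a minor bookkeeping omission: the whole reason this statement is a cited theorem rather than a two-line corollary of Lemma~\ref{kostochka1} is that the extension lemma genuinely fails to apply there, and the published proofs devote essentially all of their length to precisely this case (handling it requires removing larger structured sets, e.g.\ paths or near-independent sets, together with a careful argument that the extension can be done equitably). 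As written, the proposal is an accurate diagnosis of the obstruction followed by a declaration of intent, so it has a genuine gap and does not constitute a proof of the lemma.
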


In the following, let us give the proof of the main theorem.

\begin{theorem}\label{theorem1} If $G$ is a planar graph without chordal $4$- and $6$-cycles,
then $G$ is equitably $k$-colorable where
$k\geq\max\{7,\Delta(G)\}$.
\end{theorem}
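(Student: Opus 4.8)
The plan is to induct on $|V(G)|$. Both hypotheses are hereditary: an induced subgraph of a planar graph is planar, any chordal $4$- or $6$-cycle in $G-S$ would already be one in $G$, and $\Delta(G-S)\le\Delta(G)$, so every smaller graph $G-S$ again satisfies $k\ge\max\{7,\Delta(G-S)\}$ and the inductive hypothesis is available for it. For the base case I take $|V(G)|\le k$, where $\lceil |V(G)|/k\rceil=1$; assigning the vertices pairwise distinct colours is then a proper equitable $k$-colouring. So I may assume $|V(G)|>k\ge 7$.

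The whole argument is driven by Lemma~\ref{jun1}: it is enough to produce $k$ distinct vertices $x_1,\dots,x_k$, with $S=\{x_1,\dots,x_k\}$, admitting an ordering with $|N_G(x_i)\setminus S|\le k-i$ for all $i$. Given such an $S$, the graph $G-S$ is smaller and in our class, so by induction it has an equitable $k$-colouring; colouring $x_1,x_2,\dots,x_k$ greedily in this order, each $x_i$ need avoid only its at most $k-i$ already-coloured neighbours outside $S$ together with the at most $i-1$ colours used on $x_1,\dots,x_{i-1}$, a total of at most $k-1<k$ forbidden colours, so the $x_i$ can be given $k$ distinct colours. Each colour class then gains exactly one vertex and equitability is preserved. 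Thus the task reduces to exhibiting one such \emph{reducible} set $S$.

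To build $S$ I would combine two ingredients. First, by Corollary~\ref{cor4degenerate} $G$ is $4$-degenerate, so $\delta(G)\le 4$ and $G$ has an ordering in which every vertex has at most $4$ earlier neighbours; if $S$ is taken to be the last $k$ vertices of such an ordering then every $x\in S$ satisfies $|N_G(x)\setminus S|\le 4$, which already meets the budget $k-i$ at every loose position $i\le k-4$. Second, the tight terminal positions, whose budgets $0,1,2,\dots$ drop below $4$, are where the structural lemmas enter: if $\delta(G)\ge 4$ (equivalently $\delta(G)=4$, since $G$ is $4$-degenerate) then Lemma~\ref{lemma2} forces the configuration $H$ of Figure~1, while if $\delta(G)\le 3$ then Lemma~\ref{lemma3} forces one of $H_1,\dots,H_{41}$. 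In each case I place the designated low-degree vertices of the configuration in the last positions and put their few external neighbours into $S$, driving their external degrees down to $0,1,2$, and fill the remaining slots of $S$ from the degeneracy ordering so that every vertex stays within its budget.

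The main obstacle, and the bulk of the work, is the uniform verification that each of $H$ and $H_1,\dots,H_{41}$ is reducible: for every configuration one must display the ordered set $S$ and check $|N_G(x_i)\setminus S|\le k-i$ at each position, with particular attention to the tight terminal positions and to any vertex of degree exactly $k=\Delta(G)$, which must be supplied with a neighbour inside $S$. I expect this to split into subcases according to how the configuration's special vertices attach to the rest of $G$, echoing the case distinctions underlying the discharging proof of Lemma~\ref{lemma3}. Once every configuration has been certified reducible, Lemma~\ref{jun1} completes the induction and the theorem follows.
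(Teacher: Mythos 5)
Your overall strategy is the same as the paper's: reduce to Lemma~\ref{jun1}, obtain the terminal vertices of $S$ from the forced configurations (Lemma~\ref{lemma2} when $\delta(G)\ge 4$, Lemma~\ref{lemma3} when $\delta(G)\le 3$), and fill the remaining positions greedily using the $4$-degeneracy of Corollary~\ref{cor4degenerate}. The framework, the induction, and the base case are all sound.

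The gap is in the step you describe as ``fill the remaining slots of $S$ from the degeneracy ordering so that every vertex stays within its budget.'' With $k\ge 7$ there are \emph{four} tight positions, $x_k,x_{k-1},x_{k-2},x_{k-3}$, with budgets $0,1,2,3$, and the degeneracy ordering only guarantees external degree at most $4$, so it can never certify position $x_{k-3}$. Some of the configurations $H_1,\dots,H_{41}$ supply only three guaranteed-distinct low-degree vertices (the paper's class $C_2$, those not containing a vertex labelled $x_{k-3}$), so for these your plan as stated does not produce a valid $S$: neither the configuration nor the degeneracy ordering covers the budget-$3$ slot. This is not a routine verification that ``splits into subcases according to attachment''; the paper has to mount a separate argument: if $G-S''$ has no $3^-$-vertex and no labelled anchor of degree $\le 4$, it applies Lemma~\ref{lemma2} to conclude that $G-S''$ contains the configuration $H$ of Figure~1, runs an additional discharging computation to force the presence of $H_{41}$, and then builds $\bar S$ by splicing together vertices of $H_{41}$ with vertices of $H$ lying in $G-S''$. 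You need some version of this second-configuration argument (or an independent way to supply a vertex of external degree $\le 3$ for position $x_{k-3}$) before Lemma~\ref{jun1} can be invoked uniformly; without it the proof does not close for the $C_2$ configurations.
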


\begin{proof} Let $G$ be a counterexample with fewest vertices. If each component of $G$ has
at most four vertices, then $\Delta(G)\leq 3$. Clearly, $G$ is
equitably $k$-colorable by Lemma~\ref{hajs}. Otherwise, there is
at least one component with at least five vertices.

For convenience, we divide all the configurations in Figure 1 and
Figure 2 into two classes according to whether it contains the
vertex which is labelled $x_{k-3}$ or not. A configuration belongs
to $C_1$ if it contains the vertex labelled $x_{k-3}$, otherwise,
it belongs to $C_2$.

Suppose that $G$ has one of the configurations of $C_1$. In the
following, we show how to find a set $S$ in order to apply
Lemma~\ref{jun1}. For convenience, let $S'$ be the set of the
labelled vertices of this configuration. For example, if $G$ has
the configuration $H$ depicted in Figure 1, then let $S'=\{x_k,
x_{k-1}, \cdots, x_{k-4}, x_1\}$. By
Corollary~\ref{cor4degenerate}, $G$ is $4$-degenerate. Thus
starting from $S'$, we can find the remaining unspecified vertices
to obtain the set $S$ of Lemma~\ref{jun1} from highest to lowest
indices by choosing a vertex with the minimum degree in the graph
obtained from $G$ by deleting the vertices already being chosen
for $S$ at each step. By the minimality of $G$, we have $G-S$ is
equitably $k$-colorable. By Lemma~\ref{jun1}, we can obtain that
$G$ is equitably $k$-colorable, a contradiction.

Thus $G$ has a configuration of $C_2$ and $\delta(G)\leq3$ by
Lemma~\ref{lemma2}. Similarly, let $S''$ be the set of the
labelled vertices of this configuration, in which the vertices are
labelled as they are in Figure 2. Let $G'=G-S''$. If there exists
a vertex $v\in V(G')$ such that $d_{G'}(v)\leq3$ or there exists a
vertex $u\in \{x_1, x_2, x_3\}\cap S''$ such that $d_G(u)\leq4$,
then we label $v$ or $u$ with $x_{k-3}$ and let
$S'''=S''\cup\{x_{k-3}\}$. By Corollary~\ref{cor4degenerate}, $G$
is $4$-degenerate. Now starting from $S'''$, we can find the
remaining unspecified vertices to obtain the set $S$ of
Lemma~\ref{jun1} from highest to lowest indices by choosing a
vertex with the minimum degree in the graph obtained from $G$ by
deleting the vertices already being chosen for $S$ at each step.
By the minimality of $G$, we have $G-S$ is equitably
$k$-colorable. By Lemma~\ref{jun1}, we can obtain that $G$ is
equitably $k$-colorable, a contradiction.

Thus $\delta(G')\geq4$ and $d_G(v)\geq5$ for each vertex
$v\in\{x_1, x_2, x_3\}\cap S''$. Clearly, it follows the Fact.
\vspace{-0.3cm}

\begin{fact}\label{fact01} For each $x\in V(H')-\{x_k, x_{k-1},
x_{k-2}\}$, we have that $d_G(x)\geq5$ where $H'\in C_2$.
\end{fact}

Now we can easily get that $G$ has only one configuration that
belongs to $C_2$. Otherwise, $\delta(G')\leq3$. Additionally, by
Lemma~\ref{lemma2}, $G'$ contains the configuration $H$ of Figure
1. If $G$ does not contain the configuration $H_{41}$, then by
Fact~\ref{fact01}, at most one $1$-vertex, at most two
$3^-$-vertices and at most one special face can exist in $G$
simultaneously, i.e. $G$ contains the configuration $H_{39}$. Let
us now show a self-contradictory conclusion by a discharging
procedure. The discharging rules are the same as Case 1 in
Lemma~\ref{lemma3}. Clearly, we can guarantee that the new charge
of each face other than the special face, and each vertex $v\in
V(G)$ with $d(v)\geq 4$ is larger than or equal to zero. Hence
$\sum_{x\in V(G)\cup F(G)}w'(x)\geq -7+-4\times2-4=-19$, a
contradiction to $\sum_{x\in V(G)\cup F(G)}w(x)=-20$.

Thus $G$ contains the configuration $H_{41}$. Additionally, from
the above discussion, we know $G$ has no configuration $H$, and
$G'$ has the configuration $H$ in Figure 1. It is clear that one
of the vertices $\{x_k, x_{k-1}, x_{k-2}, x_1\}$ of configuration
$H_{41}$ in Figure 2 must be adjacent to one of the vertices
$\{x_k, x_{k-1}, x_{k-2}\}$ of configuration $H$ in Figure 1. It
is not difficult to find a set $\bar{S}$, starting from which, we
can find the remaining unspecified vertices in $S$ of
Lemma~\ref{jun1} from highest to lowest indices by choosing a
vertex with the minimum degree in the graph obtained from $G$ by
deleting the vertices already being chosen for $S$ at each step.
By the minimality of $G$, we have that $G-S$ is equitably
$k$-colorable. By Lemma~\ref{jun1}, we have that $G$ is equitably
$k$-colorable, a contradiction. In the following, we give the
detailed steps on how to find the set $\bar{S}$.

For convenience, we use $w_1$, $w_2$, $w_3$ and $w_4$ to denote
the vertices $x_k$, $x_{k-1}$, $x_{k-2}$ and $x_1$ of
configuration $H_{41}$ in Figure 2, respectively, and use $u_1$,
$u_2$ and $u_3$ to denote the vertices $x_k$, $x_{k-1}$ and
$x_{k-2}$ of configuration $H$ in Figure 1, respectively.

If there exists one $1$-vertex which is adjacent to one of the
vertices in $\{u_1, u_2, u_3\}$, then the $1$-vertex only may be
$w_2$ or $w_3$ from the above discussion. Without loss of
generality, we assume $w_2$ and $u'$ are adjacent to $u$ for which
$\{u, u'\}\subset\{u_1, u_2, u_3\}$. Now we label the vertices
$w_2, w_1, w_3, u, u'$ with $x_k, x_{k-1}, x_{k-2}, x_{k-3},
x_{k-4}$, respectively. We choose $\bar{S}=\{x_k, x_{k-1},
x_{k-2}, x_{k-3}, x_{k-4}\}$.

Otherwise, if $w_3$ is adjacent to one of the vertices in $\{u_1,
u_2, u_3, u_4\}$ such that $d_G(w_3)=2$, for convenience, we
assume $w_3$ and $u'$ are adjacent to $u$ for which $\{u,
u'\}\subset\{u_1, u_2, u_3\}$. Now we label the vertices $w_1,
w_2, w_3, u, u', w_4$ with $x_k, x_{k-1}, x_{k-2}, x_{k-3},
x_{k-4}, x_1$, respectively. We choose $\bar{S}=\{x_k, x_{k-1},
x_{k-2}, x_{k-3}, x_{k-4}, x_1\}$.

If $w_4$ is adjacent to one of the vertices in $\{u_1, u_2, u_3,
u_4\}$, for convenience, we assume $w_4$ and $u'$ are adjacent to
$u$ for which $\{u, u'\}\subset\{u_1, u_2, u_3\}$. Now we label
the vertices $w_1, w_2, w_3, u, u', w_4$ with $x_k, x_{k-1},
x_{k-2}, x_{k-3}, x_{k-4}, x_1$, respectively. We choose
$\bar{S}=\{x_k, x_{k-1}, x_{k-2}, x_{k-3}, x_{k-4}, x_1\}$. This
completes the proof of Theorem~\ref{theorem1}.
\end{proof}

\begin{corollary}
Let $G$ be a planar graph without chordal $4$- and $6$-cycles. If
$\Delta(G)\geq 7$, then $\chi_e(G)\leq \Delta(G)$.
\end{corollary}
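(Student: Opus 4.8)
The plan is to obtain this as an immediate consequence of Theorem~\ref{theorem1}; the only work is to match the hypotheses and unwind the definition of $\chi_e(G)$. First I would note that, since $\Delta(G)\geq 7$ by assumption, the quantity $\max\{7,\Delta(G)\}$ simplifies to $\Delta(G)$. Because $G$ is a planar graph without chordal $4$- and $6$-cycles, it satisfies the hypothesis of Theorem~\ref{theorem1}, which guarantees that $G$ is equitably $k$-colorable for every $k\geq\max\{7,\Delta(G)\}=\Delta(G)$.

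Next I would instantiate this conclusion at the single value $k=\Delta(G)$, which is admissible since trivially $\Delta(G)\geq\Delta(G)$. This produces an equitable $\Delta(G)$-coloring of $G$. Recalling that $\chi_e(G)$ is defined as the smallest integer $k$ for which $G$ is equitably $k$-colorable, the mere existence of an equitable $\Delta(G)$-coloring forces $\chi_e(G)\leq\Delta(G)$, which is exactly the desired inequality.

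There is essentially no obstacle at the level of the corollary: the entire combinatorial content is absorbed into Theorem~\ref{theorem1} (and, through it, into the structural Lemmas~\ref{lemma2} and~\ref{lemma3} and the discharging arguments). The corollary only records the specialization $k=\Delta(G)$, permitted by the constraint $\Delta(G)\geq 7$, together with the definitional passage from equitable $\Delta(G)$-colorability to the bound on the equitable chromatic number. If anything, the sole point worth stating explicitly is the arithmetic simplification $\max\{7,\Delta(G)\}=\Delta(G)$, which is precisely what the hypothesis $\Delta(G)\geq 7$ is there to supply.
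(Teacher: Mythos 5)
Your proposal is correct and matches the paper's (implicit) argument exactly: the corollary is stated as an immediate consequence of Theorem~\ref{theorem1}, obtained by observing that $\Delta(G)\geq 7$ gives $\max\{7,\Delta(G)\}=\Delta(G)$, so $G$ is equitably $\Delta(G)$-colorable and hence $\chi_e(G)\leq\Delta(G)$ by definition. No further comment is needed.
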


\begin{corollary}
Let $G$ be a planar graph without chordal $4$- and $6$-cycles. If
$\Delta(G)\geq 7$, then $\chi^*_e(G)\leq \Delta(G)$.
\end{corollary}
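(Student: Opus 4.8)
The plan is to argue by a minimal counterexample, using the reducibility of the configurations isolated in Lemmas~\ref{lemma2} and~\ref{lemma3} together with the deletion engine of Lemma~\ref{jun1}. Let $G$ be a counterexample with the fewest vertices. If every component of $G$ has at most four vertices then $\Delta(G)\le 3$, so $k\ge 7\ge\Delta(G)+1$ and $G$ is equitably $k$-colorable by Lemma~\ref{hajs}; hence some component has at least five vertices. The whole strategy is then to exhibit an ordered set $S=\{x_1,\dots,x_k\}$ of $k$ vertices with $|N_G(x_i)\setminus S|\le k-i$ for every $i$: by minimality $G-S$ is equitably $k$-colorable, and Lemma~\ref{jun1} lifts this to an equitable $k$-coloring of $G$, the desired contradiction.

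The mechanism for producing $S$ rests on $4$-degeneracy. By Corollary~\ref{cor4degenerate}, $G$ is $4$-degenerate, so once a few vertices have been pinned down by a forbidden configuration, the remaining slots of $S$ can be filled from the highest index downwards by repeatedly removing a vertex of minimum degree in the graph obtained from $G$ by deleting the vertices already chosen. Each such vertex then has at most four neighbours outside $S$, and $k\ge 7$ leaves exactly enough slack to place these degeneracy-chosen vertices at indices $i\le k-4$, where $k-i\ge 4$; meanwhile the three distinguished vertices $x_k,x_{k-1},x_{k-2}$ of each configuration have all their neighbours displayed (the solid-vertex convention), which forces $|N_G(x_i)\setminus S|\le k-i\in\{0,1,2\}$ at the top indices once those neighbours are placed in $S$. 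I would split the configurations of Figures~1 and~2 into the class $C_1$ of those containing the vertex $x_{k-3}$ and the class $C_2$ of those that do not; whenever $G$ contains a $C_1$ configuration, the filling above produces a valid $S$ directly and we are done.

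Next I would dispose of the case where $G$ contains no $C_1$ configuration. If $\delta(G)\ge 4$ then Lemma~\ref{lemma2} forces the configuration $H$, which lies in $C_1$; hence $\delta(G)\le 3$, and Lemma~\ref{lemma3} guarantees a $C_2$ configuration with labelled set $S''$. I would first inspect $G'=G-S''$: if some vertex of $G'$ has degree at most $3$, or some boundary vertex among $x_1,x_2,x_3$ of $S''$ has degree at most $4$ in $G$, that vertex can be installed as $x_{k-3}$ and the $C_1$ argument applies verbatim. This reduces us to the situation $\delta(G')\ge 4$ together with every boundary vertex other than $x_k,x_{k-1},x_{k-2}$ having degree at least $5$.

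The main obstacle is this last configuration-interaction case. Here I would rerun the discharging argument of Case~1 of Lemma~\ref{lemma3} on the reduced structure to show that $G$ can contain at most one $C_2$ configuration while $G'$ must simultaneously contain the configuration $H$ supplied by Lemma~\ref{lemma2}; pushing the charge bound to a contradiction forces $G$ to contain the two-pendant configuration $H_{41}$. Since an $H_{41}$ and an $H$ cannot be vertex-disjoint (one of the vertices of $H_{41}$ must attach to $\{x_k,x_{k-1},x_{k-2}\}$ of $H$), I would then construct $\bar S$ explicitly by a short case analysis on which pendant or low-degree vertex of $H_{41}$ meets $H$, hand-labelling five or six named vertices and completing by $4$-degeneracy as before. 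Verifying $|N_G(x_i)\setminus S|\le k-i$ at each hand-labelled index, precisely where the two configurations overlap and the degeneracy slack is tightest, is the delicate point; the list-coloring analogue needed for Conjecture~\ref{kostochconj2} then follows by substituting Lemma~\ref{kostochka1} for Lemma~\ref{jun1} throughout.
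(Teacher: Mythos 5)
Your proposal is correct and takes essentially the same route as the paper: it is, in substance, the paper's own proof of Theorem~\ref{theorem1} (minimal counterexample, the $C_1$/$C_2$ split of the configurations from Lemmas~\ref{lemma2} and~\ref{lemma3}, filling out $S$ by $4$-degeneracy so that Lemma~\ref{jun1} applies, and the final $H_{41}$--$H$ interaction case). The paper obtains the stated corollary as an immediate consequence of that theorem, since $\Delta(G)\geq 7$ makes every $l\geq\Delta(G)$ satisfy $l\geq\max\{7,\Delta(G)\}$; the only step your write-up leaves implicit is this one-line deduction from ``equitably $k$-colorable for all such $k$'' to $\chi^*_e(G)\leq\Delta(G)$.
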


\begin{theorem} If $G$ is a planar graph without chordal $4$- and $6$-cycles and
$k\geq \max\{7,\Delta(G)\}$, then $G$ is equitably $k$-choosable.
\end{theorem}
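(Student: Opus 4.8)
The final theorem is the equitable choosability analogue of Theorem~\ref{theorem1}, and the plan is to follow exactly the same skeleton, substituting Lemma~\ref{kostochka1} for Lemma~\ref{jun1} throughout. The point is that the entire structural analysis of the minimal counterexample---the discharging argument of Lemma~\ref{lemma3}, the $4$-degeneracy from Corollary~\ref{cor4degenerate}, and the configuration-reduction strategy using $C_1$ and $C_2$---is purely combinatorial and makes no reference to ordinary coloring versus list coloring. Only the final reducibility step, where one concludes that $G-S$ extends to all of $G$, depends on which reduction lemma is invoked, and the two lemmas have identical hypotheses ($|N_G(x_i)-S|\leq k-i$ for $1\leq i\leq k$).

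First I would let $G$ together with a $k$-uniform list assignment $L$ be a counterexample minimizing $|V(G)|$, where $k\geq\max\{7,\Delta(G)\}$. If every component has at most four vertices then $\Delta(G)\leq 3$, and $G$ is equitably $k$-choosable by Lemma~\ref{wang}; so I may assume some component has at least five vertices. Exactly as in the proof of Theorem~\ref{theorem1}, I split the forbidden configurations of Figures~1 and~2 into the classes $C_1$ (those containing the vertex $x_{k-3}$) and $C_2$ (those not containing it). For a configuration in $C_1$, I take $S'$ to be its labelled vertices and, using $4$-degeneracy, greedily extend $S'$ to a set $S=\{x_1,\ldots,x_k\}$ by repeatedly removing a minimum-degree vertex, guaranteeing $|N_G(x_i)-S|\leq k-i$. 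By minimality $G-S$ has an equitable $L$-coloring, and then Lemma~\ref{kostochka1} extends it to an equitable $L$-coloring of $G$, a contradiction.

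If instead $G$ realizes only a configuration of $C_2$, then $\delta(G)\leq 3$ by Lemma~\ref{lemma2}, and I repeat verbatim the case analysis of Theorem~\ref{theorem1}: set $G'=G-S''$, handle the subcases according to whether some vertex of $G'$ has degree $\leq 3$ or some $x_i\in\{x_1,x_2,x_3\}$ has $d_G\leq 4$, and otherwise derive Fact~\ref{fact01} that every non-apex labelled vertex has degree $\geq 5$. From there the same discharging-based contradiction (forcing the simultaneous presence of $H$ and $H_{41}$) and the explicit construction of the set $\bar S$ carry over unchanged, since none of these steps uses the coloring model. At each terminal step I invoke Lemma~\ref{kostochka1} in place of Lemma~\ref{jun1} to extend the equitable $L$-coloring of $G-S$ to $G$.

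The only genuine obstacle is confirming that every reduction in the proof of Theorem~\ref{theorem1} really is list-compatible---that is, that the sets $S$ constructed there depend only on the graph structure and degree hierarchy, never on the particular colors used. This is the crux, but it is immediate: the construction of $S$ (and of $S'$, $S''$, $S'''$, $\bar S$) is defined entirely through degrees and adjacencies and produces the ordering condition $|N_G(x_i)-S|\leq k-i$, which is precisely the common hypothesis of Lemmas~\ref{jun1} and~\ref{kostochka1}. Hence the proof is word-for-word identical to that of Theorem~\ref{theorem1} with ``equitably $k$-colorable'' replaced by ``equitably $L$-colorable'' and Lemma~\ref{jun1} replaced by Lemma~\ref{kostochka1}, and the minimality of $G$ yields the contradiction that completes the argument.
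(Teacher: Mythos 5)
Your proposal matches the paper's own proof: the paper likewise takes a minimal counterexample, handles the small-component case via Lemma~\ref{wang}, and then declares the rest ``similar to the proof of Theorem~\ref{theorem1}'' using Lemma~\ref{lemma3} and Lemma~\ref{kostochka1} in place of Lemma~\ref{jun1}. Your write-up simply spells out in more detail why the substitution is legitimate, which is exactly the intended argument.
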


\begin{proof} Let $G$ be a counterexample with the fewest vertices, i.e. $G$ is a
critical graph. If each component of $G$ has at most four
vertices, then $\Delta(G)\leq 3$. So $G$ is equitably
$k$-choosable by Lemma~\ref{wang}. Otherwise, the proof is similar
to the proof of Theorem~\ref{theorem1} by Lemma~\ref{lemma3} and
Lemma~\ref{kostochka1}.
\end{proof}

\begin{corollary}
Let $G$ be a planar graph without chordal $4$- and $6$-cycles. If
$\Delta(G)\geq 7$, then $G$ is equitably $\Delta(G)$-choosable.
\end{corollary}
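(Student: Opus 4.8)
The plan is to mirror the proof of Theorem~\ref{theorem1}, replacing its two coloring ingredients by their list-coloring analogues. I would take $G$ to be a counterexample with the fewest vertices, equipped with a $k$-uniform list assignment $L$ witnessing that $G$ is not equitably $k$-choosable. If every component of $G$ has at most four vertices, then $\Delta(G)\leq 3$ and Lemma~\ref{wang} already supplies an equitable $L$-coloring; so I may assume some component has at least five vertices, which allows the structural lemmas to bite.

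First I would invoke the dichotomy from Lemmas~\ref{lemma2} and~\ref{lemma3}: $G$ contains one of the configurations $H$ of Figure~1 or $H_1,\ldots,H_{41}$ of Figure~2. I would split these into the two families $C_1$ (those containing a vertex labeled $x_{k-3}$) and $C_2$ (those not containing such a vertex), exactly as for Theorem~\ref{theorem1}. For a configuration in $C_1$, let $S'$ be its labeled vertices; since $G$ is $4$-degenerate by Corollary~\ref{cor4degenerate}, I would extend $S'$ to an ordered set $S=\{x_1,\ldots,x_k\}$ by repeatedly appending a minimum-degree vertex of the graph left after deleting the already-chosen vertices, filling indices from high to low. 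The degeneracy guarantees the neighborhood bound $|N_G(x_i)-S|\leq k-i$ required by the reduction lemma. The $C_2$ case is handled as in Theorem~\ref{theorem1}: here $\delta(G)\leq 3$, one first augments the labeled set $S''$ with a suitable low-degree vertex to form $S'''$, and in the borderline subcase where none is available, runs the same delicate endgame involving $H_{39}$, $H_{41}$, and the auxiliary configuration $H$, before completing $S$ greedily by $4$-degeneracy.

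The crucial substitution is that, at the final step, I would invoke Lemma~\ref{kostochka1} in place of Lemma~\ref{jun1}. By minimality of $G$, the graph $G-S$ has fewer vertices and, under the restriction of $L$ (which remains $k$-uniform since $L$ is unchanged), admits an equitable $L$-coloring; because $S$ satisfies the bounds $|N_G(x_i)-S|\leq k-i$, Lemma~\ref{kostochka1} extends this to an equitable $L$-coloring of all of $G$, contradicting the choice of $G$. This is why the proof is genuinely ``similar'' rather than merely analogous: the only coloring-specific facts used in Theorem~\ref{theorem1} are the base case, now handled by Lemma~\ref{wang} instead of Lemma~\ref{hajs}, and the extension step, now handled by Lemma~\ref{kostochka1} instead of Lemma~\ref{jun1}. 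I do not expect a genuinely new obstacle; since Lemma~\ref{kostochka1} carries \emph{exactly} the hypothesis $|N_G(x_i)-S|\leq k-i$ as Lemma~\ref{jun1}, the whole set-finding machinery, the $C_1/C_2$ split, and the $H_{41}$ endgame transfer verbatim. The only real care required is bookkeeping: confirming at each step that $G-S$ still satisfies the minimality hypothesis and that the restricted list assignment stays $k$-uniform, which is immediate.
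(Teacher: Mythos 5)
Your proposal is correct and follows the paper's route exactly: the paper obtains this corollary immediately from the preceding theorem (take $k=\Delta(G)\geq 7$, so $k=\max\{7,\Delta(G)\}$), and that theorem is proved precisely as you describe, by rerunning the argument of Theorem~\ref{theorem1} with Lemma~\ref{wang} replacing Lemma~\ref{hajs} for the small-component base case and Lemma~\ref{kostochka1} replacing Lemma~\ref{jun1} for the extension step, the structural work of Lemmas~\ref{lemma2} and~\ref{lemma3} being color-agnostic. The only difference is presentational: you prove the full $k$-choosability theorem rather than just specializing it, which is harmless.
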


\section{Remarks and perspective}
Most of the results on equitable and list equitable colorings on
planar graphs are restricted to $3$-degenerate graphs. In this
paper, we confirm the Conjecture~\ref{chenconj} and
Conjecture~\ref{kostochconj2} for the planar graphs without
chordal $4$- and $6$-cycles which are not necessarily
$3$-degenerate. Can a similar conclusion be drawn for
$4$-degenerate graphs and ordinary planar graphs?

\acknowledgements We would like to thank the referees for
providing some very helpful suggestions for revising this paper.

\nocite{*}
\bibliographystyle{abbrvnat}
\bibliography{sample-dmtcs}

\end{document}